\newcounter{Def}[section]
\theoremstyle{plain}
\newtheorem{Theorem}[Def]{Theorem}
\newtheorem{Example}[Def]{Example}
\newtheorem{Remark}[Def]{Remark}
\newtheorem{Lemma}[Def]{Lemma}
\newtheorem{Proposition}[Def] {Proposition}
\newtheorem{Cor}[Def] {Corollary}
\theoremstyle{definition}
\newtheorem{Definition}[Def]{Definition}
\newcommand{\ipic}[3][-0.5]{\raisebox{#1\height}{\scalebox{#3}{\includegraphics{pic/#2.pdf}}}}
\newcommand{\qp}{\quad .}
\newcommand{\qc}{\quad ,}
\newcommand\blfootnote[1]{%
	\begingroup
	\renewcommand\thefootnote{}\footnote{#1}%
	\addtocounter{footnote}{-1}%
	\endgroup
}
\def\C          {\mathbb C}
\def\Cat         {\mathcal{C}}
\def\Dat         {\mathcal D}
\def\id          {\mathbf id}
\def\Z          {\mathbb Z}
\def\be                 {\begin{equation}}
\def\ee                 {\end{equation}}
\def\id               {{\mathrm{id}}}
\def\Hom         {\mathrm{Hom}}
\def\coev        {\mathrm{coev}}
\def\ev          {\mathrm{ev}}
\def\mod          {\mathrm{mod}}
\def \Nat        {\mathrm{Nat}}
\def \F           {\mathcal{F}}
\def \U           {\mathcal{U}}
\def \Hmod         {H\mathrm{-mod}}
\def \X            {\mathsf{X}}
\def \Y             {\mathsf{Y}}
\numberwithin{equation}{section}
\begin{document}
\numberwithin{equation}{section}
\title{\textbf{\Large{Davydov-Yetter cohomology, comonads\\ and Ocneanu rigidity}}}
\date{\emph{\small{$\dagger$ Institut Denis Poisson, CNRS, Universit\'e de Tours, Universit\'e d'Orl\'eans,\\ Parc de Grandmont, 37200 Tours, France} \\
 \small{$\S$ Dahlem Center for Complex Quantum Systems, Freie Universit{\"a}t Berlin, 14195 Berlin, Germany}\\
  \small{$\ddagger$ Fachbereich Mathematik, Universit\"at Hamburg, Bundesstra\ss e 55, 20146 Hamburg, Germany}}}
\author{Azat M. Gainutdinov$^{\dagger}$, Jonas Haferkamp$^{\S}$ and Christoph Schweigert$^{\ddagger}$}
\maketitle
\begin{abstract}\noindent
	Davydov-Yetter cohomology classifies infinitesimal deformations of tensor categories and of tensor functors. Our first result is that Davydov-Yetter cohomology for  finite tensor categories is equivalent to the  cohomology of a comonad arising from the central Hopf monad. This has several applications: First, we obtain a short and conceptual proof of Ocneanu rigidity. Second, it allows to use standard methods from comonad cohomology theory to compute Davydov-Yetter cohomology for a family of non-semisimple finite-dimensional Hopf algebras generalizing Sweedler's four dimensional Hopf algebra.
\end{abstract}

\blfootnote{
\textit{MSC codes:} 18D10, 18C15, 16T05.\\
\mbox{}\quad\;\,
\textit{Keywords:} Finite tensor categories, Davydov-Yetter cohomologies, Comonad cohomologies,\\
\mbox{}\qquad\qquad\qquad\;\; Deformation theory, Hopf algebras.
}

\tableofcontents

\section{Introduction}

Tensor categories are ubiquitous in many problems in algebra, representation theory, quantum topology and  mathematical physics.
Considerable effort was spent to better understand their properties, 
especially for the subclass of \textit{fusion categories} over the field of complex numbers (see e.g.~\cite{ENO}), which are semisimple finite tensor categories. 
In particular, there is only a finite number of fusion categories (up to equivalence) corresponding to a fusion ring and only a finite number of braidings for a given fusion category.
 This is a consequence of  the so-called Ocneanu rigidity, the fact that fusion categories   admit only trivial  deformations of their monoidal structure.
 
 In contrast to fusion categories, \textsl{non-semisimple} finite tensor categories are much less  understood. The main motivation for this paper is to have a better understanding of the deformation theory of such categories and of tensor functors between them.
We recall that  infinitesimal deformations of tensor categories and tensor functors  are controlled by Davydov-Yetter (DY) cohomology, see~\cite{CY,Da,Y1,Y2} or  in this text  Definition~\ref{definition:DYcoh}, which is the cohomology  of a complex associated to a tensor functor $F\colon\Cat\to\Dat$, and will  be denoted by $H_{DY}^{\bullet}(F)$. In particular, the third Davydov-Yetter cohomology group of the identity functor on a tensor category $\Cat$ classifies infinitesimal deformations of the associator up to an equivalence. Infinitesimal deformations for the monoidal structure of tensor functors are classified by the second DY cohomology group of the respective functor. 
 Deformations of braidings in $\Cat$ can be also studied via deformations of appropriate tensor functors from $\Cat\times \Cat$ to $\Cat$, see details in~\cite[Thm.~2.18]{Y1}.
 
 For tensor functors $F$ between (multi)-fusion categories, we have the following vanishing theorem
 $$
 H^n_{DY}(F)=0\ , \qquad \text{for all} \quad  n>0.
 $$
  This immediately implies the abscence of infinitesimal deformations. This fact is known as Ocneanu rigidity and it is proven in \cite[Sec.~7]{ENO} using weak Hopf algebras.
 
We know that for non-semisimple categories the Ocneanu rigidity in the above form can not hold in general. 
Indeed, the theorem~\cite[Thm.\,6.3 \& Rem.\,6.4]{EG} provides examples of finite tensor categories with infinitely many non-equivalent rigid tensor structures.
The failure of  the Ocneanu rigidity in the non-semisimple case is also easy to see in the case of the forgetful functor for Hopf algebras. 
Let $H$ be a finite-dimensional Hopf algebra over a field~$k$, $\Hmod$ the finite tensor category of finite dimensional $H$-modules and $F$ the forgetful functor.
Then in this case, the groups $H^n_{DY}(F)$ are isomorphic to the $n$th Hochschild cohomologies $\mathrm{HH}^n(H^*,k)$ of the dual Hopf algebra $H^*$.
  The latter are the extension groups $\mathrm{Ext}^n_{H^*}(k,k)$,
and there are indeed  many examples where these groups are nonzero, e.g.\ for Sweedler's four dimensional Hopf algebra.
For other functors like the identity functor -- the case we are mostly interested in -- a direct calculation of $H^n_{DY}(\id)$ is quite involved.
 In the case of representations of groups over finite fields there are calculations for $n=2$, see~\cite{Da2,EG2}, and  in this paper
  we  provide another example (based on finite-dimensional Hopf algebras) that shows the DY cohomologies for the identity functor in all degrees can not be in general zero. 

A key result of this paper is a reformulation of the DY cohomology theory  via a more classical comonad cohomology theory~\cite{BB}. The advantage of such a reformulation is that we can use then  standard results from the comonad cohomology theory to prove useful properties of  DY cohomologies and even to provide explicit calculations in the Hopf algebra cases.
For a finite tensor category $\Cat$ and $F=\id_\Cat$,
the comonad $G$ in question is an endofunctor on the Drinfeld center $\mathcal{Z}(\Cat)$ of $\Cat$ constructed via the adjunction $\F\dashv \U$ where 
$\U\colon\mathcal{Z}(\Cat)\to\Cat$ is  the forgetful functor  and
  $\F\colon\Cat\to \mathcal{Z}(\Cat)$ is the free functor, i.e.\ $G=\F\circ \U$.
We prove that the DY cohomology of $\Cat$ is equivalent to the comonad cohomology of $G$.
 This is formulated in Theorem~\ref{theorem:Main}  for general (exact) tensor functors $F$.

The above adjunction also defines the corresponding monad $Z=\U\circ \F$
 on $\Cat$ that can be realized via the coend
 \begin{equation}
 Z(V):=\int^{X\in\Cat}X^{\vee}\otimes V\otimes X\ ,
 \end{equation}
 and the free functor $\F$ 
  is then just the induction functor corresponding to the monad $Z$.

We also note that $Z$ is  the well known central Hopf monad~\cite{DS,BV2,Sh2}, and when applied to the tensor unit $I$, $Z(I)$ is the canonical Hopf algebra object in $\Cat$ if $\Cat$ is  braided. 
This algebra  was also a central object of studies in understanding fundamental properties of factorizable tensor categories, e.g.\ in the mapping class group representations~\cite{Ly,KL,Sh1,FS,GR} associated to $\Cat$.

  Comonad cohomology theory for a comonad $G$  has properties similar to standard homological algebra. For example, a variant of the comparison theorem (or fundamental lemma) of homological algebra holds for any additive category and ``coefficient'' functors, for details see~\cite{BB,B} or in this text Theorem~\ref{theorem:Comparison}. This theorem is a major tool for computation of cohomology groups. The only difference from the standard homological algebra is that one replaces the notions of projectiveness and exactness by the notions of $G$-projectiveness and $G$-exactness, respectively, see Definition~\ref{definition:G-projective}. The comonad cohomology of $G$-projective objects -- similar to projective objects in homological algebra -- always vanishes (Proposition~\ref{theorem:G-acyclicity}). Combined with the reformulation of Davydov-Yetter cohomology in Theorem~\ref{theorem:Main}, this fact implies a short and conceptual proof of Ocneanu rigidity for fusion categories and their tensor functors, see Corollary~\ref{cor:ocneanu}. 
 More precisely, we first introduce a more general formulation of Davydov-Yetter cohomology where the coefficients (Definition~\ref{definition:DY}) are objects in the Drinfeld center, and then show that all these coefficients are $G$-projective, and thus the cohomology groups in   positive grades vanish.
 
In Section~\ref{sec:RepresentationCategories}, we consider the special case of finite tensor categories that are representation categories of finite dimensional Hopf algebras.
In Section~\ref{ssec:CentralH-mod}, we  describe  the  comonad $G$  for the case $F=\id$ and its  bar resolution, then 
 in Section~\ref{ssec:G-proj-Hopf}  we describe $G$-projective modules in Hopf algebraic terms and relate them to $H^*$ projectiveness, see Corollaries~\ref{cor:DH-G-proj} and~\ref{cor:G-proj-H}.
 In Section~\ref{ssec:forgetful}, we show how to reformulate the Davydov-Yetter cohomology of the forgetful functor as the Davydov-Yetter cohomology of the identity functor with non-trivial coefficients (Theorem~\ref{cor:HochschildasCoefficients}). 

As we briefly discussed above, the Ocneanu rigidity does not hold for non-semisimple finite tensor categories: there are examples of such 
categories with non-trivial DY cohomology. 
In general, these can hint towards finite deformations and, thus, be an indispensable tool to study continuous families of tensor categories.
 In particular, Section~\ref{ssec:example} is concerned with a family of non-semisimple  Hopf algebras over the field $\mathbb{C}$ of complex numbers that generalize Sweedler's four dimensional Hopf algebra: the so-called bosonization of the $k$-dimensional commutative super Lie algebra  $\Lambda \mathbb{C}^k$ which is  $B_k:=\Lambda \mathbb{C}^k\rtimes \mathbb{C}[\Z_2]$. We apply our reformulation of the DY cohomology as the comonad cohomology for the case of  $B_k\mathrm{-mod}$ -- the category of finite dimensional modules over $B_k$.
The only technical part is a construction of a $G$-projective resolution which is $G$-exact, with the final result  (see Theorem~\ref{proposition:DYSweedler})
 \begin{equation}
 \dim H_{DY}^n(B_k\mathrm{-mod})=
\begin{cases}
 0&\textrm{for $n$ odd}\\
 {k+n-1\choose n}& \text{for $n$ even},
 \end{cases}
 \end{equation}
which turned out to agree with $\dim H^n_{DY}(\mathcal{U}_{B_k})$, where $\U_{B_k}\colon B_k\mathrm{-mod}\to \mathrm{Vec}_{\C}$ is the forgetful functor. 
These results are to the best of our knowledge the first known examples of finite tensor categories where (non-trivial) Davydov-Yetter cohomology of the identity functor was computed in all degrees.

\subsubsection*{Acknowledgements}

We thank Alexey Davydov, Pavel Etingof and Jan-Ole Willprecht
 for helpful discussions.
AMG is supported by CNRS and  ANR project JCJC ANR-18-CE40-0001, and also thanks the Humboldt Foundation for a partial financial support.
 JH is funded by the Deutsche Forschungsgemeinschaft  (DFG, EI 519/14-1).
CS is partially funded by DFG, German Research
Foundation under Germany's Excellence Strategy -- EXC 2121 ``Quantum Universe''
 -- 390833306 and by the RTG 1670 ``Mathematics inspired by string theory and quantum field theory''. 

\section{(Co)monads and their cohomology theories}\label{ssec:comonads}
In this section, we recall some basic definitions about monads and then summarize results from \cite{BB} on the cohomology theory of comonads. Most of the material in this section is standard, and a reader familiar with the subject can skip it.

\subsection{Monads and comonads}
\begin{Definition}[Monads]
A \emph{monad} (sometimes called \emph{triple}) on a category $\Cat$ consists of the following data:
\begin{itemize}
	\item An endofunctor $T\colon\Cat\to\Cat$,
	\item a natural transformation \emph{unit} $\eta\colon\id\to T$ and
	\item a natural transformation \emph{multiplication} $\mu\colon T^2:=T\circ T\to T$.
\end{itemize}
These are subject to the following relations for all $X\in \Cat$:
\begin{equation*}
\begin{tikzpicture}
\matrix (m) [matrix of math nodes,row sep=3em,column sep=4em,minimum width=2em]
{
T^3(X)&  T^2(X)\\
T^2(X) & T(X) \\
};
\path[-stealth]

(m-1-1)  edge node [above] {\small $ T(\mu_X)$} (m-1-2)
(m-1-1) edge node [left] {\small $\mu_{T(X)}$} (m-2-1)
(m-2-1) edge node [below] {\small $\mu_X$} (m-2-2)
(m-1-2) edge node [right] {\small $\mu_X$} (m-2-2)
;
\end{tikzpicture}
\;\;\;\;\;\;
\begin{tikzpicture}
\matrix (m) [matrix of math nodes,row sep=3em,column sep=4em,minimum width=2em]
{
	T(X)&  T^2(X)&T(X)\\
	 & T(X) & \\
};
\path[-stealth]

(m-1-1)  edge node [above] {\small $\eta_{T(X)}$} (m-1-2)
(m-1-1) edge node [below left] {\small $\id$} (m-2-2)
(m-1-2) edge node [right] {\small $\mu_X$} (m-2-2)
(m-1-3) edge node [above] {\small $T(\eta_X)$} (m-1-2)
(m-1-3) edge node [below right] {\small $\id$} (m-2-2)
;
\end{tikzpicture}
\end{equation*}
A \emph{comonad} (sometimes called a \emph{cotriple}\footnote{See e.g.~\cite[Sec.\,8.6]{Weibel}}) 
$(G,\Delta,\varepsilon)$ is a functor $G\colon\Cat\to\Cat$ with natural transformations called \emph{counit} $\varepsilon\colon G\to \id$ and \emph{comultiplication} $\Delta\colon G\to G^2$. These have to satisfy the above diagrams with reversed arrows.
\end{Definition}
We need the notion of $T$-modules (which are sometimes also called $T$-algebras).
\begin{Definition}
	Given a monad on a category $\Cat$, the category $T\mathrm{-mod}$ of \emph{$T$-modules} consists of objects being pairs $(X,\beta_X)$ 
with $X\in \Cat$ and $\beta_X\colon T(X)\to X$, such that the following diagrams commute:
	\begin{equation}\label{eq:Tmod}
	\begin{tikzpicture}
	\matrix (m) [matrix of math nodes,row sep=3em,column sep=4em,minimum width=2em]
	{
		T^2(X)&  T(X)\\
		T(X) & X \\
	};
	\path[-stealth]
	
	(m-1-1)  edge node [above] {\small $\mu_X$} (m-1-2)
	(m-1-1) edge node [left] {\small $T(\beta_X)$} (m-2-1)
	(m-2-1) edge node [below] {\small $\beta_X$} (m-2-2)
	(m-1-2) edge node [right] {\small $\beta_X$} (m-2-2)
	;
	\end{tikzpicture}
	\;\;\;\;\;
	\begin{tikzpicture}
	\matrix (m) [matrix of math nodes,row sep=3em,column sep=4em,minimum width=2em]
	{
		X&  T(X)\\
		& X \\
	};
	\path[-stealth]
	
	(m-1-1)  edge node [above] {$\eta_X$} (m-1-2)
	(m-1-1) edge node [below left] {$\id$} (m-2-2)
	(m-1-2) edge node [right] {$\beta_X$} (m-2-2)
	;
	\end{tikzpicture}
	\end{equation}
	Furthermore, a morphism of $T$-modules $f\colon(X,\beta_X)\to(Y,\beta_Y)$ is a morphism $f\colon X\to Y$ in $\Cat$ such that the diagram
	\begin{equation*}
	\begin{tikzpicture}
	\matrix (m) [matrix of math nodes,row sep=3em,column sep=4em,minimum width=2em]
	{
		T(X)&  T(Y)\\
		X & Y \\
	};
	\path[-stealth]
	
	(m-1-1)  edge node [above] {\small $T(f)$} (m-1-2)
	(m-1-1) edge node [left] {\small $\beta_X$} (m-2-1)
	(m-1-2) edge node [right] {\small $\beta_Y$} (m-2-2)
	(m-2-1) edge node [below] {\small $f$} (m-2-2)
	;
	\end{tikzpicture}
	\end{equation*}
	commutes. $T\mathrm{-mod}$ is sometimes called the Eilenberg-Moore category of $T$.
\end{Definition}
In what follows,  for a $T$-module $(X,\beta_X)$ we will also use the  notation
\begin{equation}
 \mathsf{X}:=(X,\beta_X).
 \end{equation}
	\begin{Example}
	A simple example of a monad is provided by a monoid  $(A,m,u)$ in a monoidal category $\Cat$. The associated monad consists of the endofunctor
$T_A\colon\Cat\to\Cat$ such that $T_A(X)= A\otimes X$ and the natural transformations
	\begin{align}
	 \mu_X&=(m\otimes \id_X)\circ \alpha_{A,A,X}\colon A\otimes (A\otimes X)\to (A\otimes A)\otimes X\to A\otimes X,\\
 \eta_X&=u\otimes \id_X
 \colon X\to A\otimes X,
	 \end{align}
where $\alpha$ denotes the associator of $\Cat$.
	 Analogously, to every comonoid in a monoidal category one can associate a comonad.
\end{Example}
A source of monads and comonads are pairs of adjoint functors. More precisely, given a pair of adjoint functors $\F\dashv \U$, with $\F\colon\Cat\to \Dat$ (left adjoint) and $\U\colon\Dat \to\Cat$ (right adjoint) and unit $\eta\colon\id_{\Cat}\to \U\circ \F$ and counit $\varepsilon\colon\F\circ \U\to \id_{\Dat}$ of the adjunction, then $T:=\U\circ \F$ admits a canonical structure of a monad on $\Cat$ and $G:= \F\circ \U$ admits a canonical structure of a comonad on $\Dat$. Here, unit and counit of the monad $T$ and comonad $G$ are
$\eta$ and $\varepsilon$, respectively. 
The corresponding multiplication and comultiplication are defined as
\begin{align}
&\mu\colon\; T^2=\U\circ \F\circ \U\circ \F\xrightarrow{\U(\varepsilon_{\F(?)})}\U\circ \F=T,\nonumber\\
&\Delta\colon\; G= \F\circ \U\xrightarrow{\F(\eta_{\U(?)})} \F\circ \U\circ \F\circ \U=G^2.
\end{align} 
 However, given a monad $T$ on a category $\Cat$, there is usually more than one way to construct a pair of adjoint functors such that $T$ is induced by this adjunction. 
 The adjunction corresponding to $T$ is defined via the forgetful functor 
\begin{equation}\label{eq:U}
\mathcal{U}\colon T\mathrm{-mod}\to \Cat,\qquad \mathcal{U}(\mathsf{X}):=X,\qquad \mathcal{U}(f):=f
\end{equation}
 and the free functor 
\begin{equation}\label{eq:F}
\mathcal{F}\colon\Cat\to T\mathrm{-mod}, \qquad
\mathcal{F}(X):=(T(X),\mu_{X}),\qquad \mathcal{F}(f):= T(f).
\end{equation}
Then we have $T=\mathcal{U}\circ \mathcal{F}$. In the following, denote by
\begin{equation}\label{eq:G_T}
G_T:= \mathcal{F}\circ \mathcal{U}
\end{equation}
 the associated comonad on $T\mathrm{-mod}$. Notice that for $(X,\beta_X)\in T\mathrm{-mod}$
 $$
 G_T\colon (X,\beta_X) \mapsto (T(X),\mu_X)\qquad  \text{and} \qquad G_T^2\colon (X,\beta_X)\mapsto \left(T^2(X),\mu_{T(X)}\right).
 $$
  Then, the comultiplication and counit of $G_T$ are given on components by
 \begin{align}
\Delta_{\mathsf{X}}
&\colon\left(T(X),\mu_X\right)\xrightarrow{T(\eta_{X})}\left(T^2(X),\mu_{T(X)}\right),\nonumber\\
\varepsilon_{\mathsf{X}}
&\colon(T(X),\mu_X)\xrightarrow{\beta_X}(X,\beta_X).\label{G-eps-def}
\end{align}

\subsection{$G$-projective objects}
Here, we discuss the notion of $G$-projective that is needed later for the comonad cohomology theory.
\begin{Definition}\label{definition:G-projective}
	Let $(G,\Delta,\varepsilon)$ be a comonad on an additive category $\Cat$.
An object $X\in\Cat$ is called \emph{$G$-projective} if there exists a morphism $s\colon X\to G(X)$ in $\Cat$, called a section, such that $\varepsilon_X\circ s=\id_X$.
\end{Definition}

The following lemma yields a criterium to identify $G$-projective objects.

\begin{Lemma}\label{lemma:directsummands}
	Let $(G,\Delta,\varepsilon)$ be a comonad on an additive category $\Cat$. The following statements hold:
	\begin{enumerate}
		\item  Every object of the form $G(X)$ for some $X\in\Cat$ is $G$-projective.
		\item Direct summands of $G$-projective objects are $G$-projective.
	\end{enumerate}
\end{Lemma}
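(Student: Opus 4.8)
The plan is to exhibit, in each case, an explicit splitting morphism $s$ satisfying the defining condition $\varepsilon\circ s=\id$ of Definition~\ref{definition:G-projective}, using only the comonad axioms and naturality of the counit. Since $G$-projectivity is a purely diagrammatic notion, both parts should reduce to short computations once the right candidate for $s$ is identified.

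For the first claim I would take $s:=\Delta_X\colon G(X)\to G^2(X)=G(G(X))$. The condition to verify is then $\varepsilon_{G(X)}\circ\Delta_X=\id_{G(X)}$, which is precisely one of the two counit axioms of the comonad $G$, namely the one obtained by reversing the arrows in the left-hand unitality triangle for monads. Hence every object of the form $G(X)$ is $G$-projective, with the comultiplication serving as the required section.

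For the second claim, suppose $Y$ is $G$-projective with section $s\colon Y\to G(Y)$, so that $\varepsilon_Y\circ s=\id_Y$, and let $X$ be a direct summand of $Y$, witnessed by morphisms $i\colon X\to Y$ and $p\colon Y\to X$ with $p\circ i=\id_X$. I would define the candidate section for $X$ by the composite
\begin{equation*}
s'\colon X\xrightarrow{\,i\,}Y\xrightarrow{\,s\,}G(Y)\xrightarrow{\,G(p)\,}G(X).
\end{equation*}
Checking the defining condition amounts to computing $\varepsilon_X\circ s'=\varepsilon_X\circ G(p)\circ s\circ i$. The key step is to invoke naturality of the counit $\varepsilon\colon G\to\id$ applied to the morphism $p$, which gives $\varepsilon_X\circ G(p)=p\circ\varepsilon_Y$; substituting this, then using $\varepsilon_Y\circ s=\id_Y$ and finally $p\circ i=\id_X$, yields $\varepsilon_X\circ s'=\id_X$, as required.

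I do not expect a serious obstacle: both statements are immediate once the correct section is guessed, and the whole argument is formal. The only points demanding a moment's care are matching the direction of the counit axiom in part~(1) -- one must use $\varepsilon_{G(X)}\circ\Delta_X=\id$ rather than $G(\varepsilon_X)\circ\Delta_X=\id$ -- and correctly orienting the naturality square of $\varepsilon$ for the projection $p$ in part~(2). It is worth noting that coassociativity of $\Delta$ enters nowhere: part~(1) uses a single counit axiom, and part~(2) uses only functoriality of $G$ together with naturality of the counit.
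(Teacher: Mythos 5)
Your proof is correct and follows essentially the same route as the paper: part (1) uses the comultiplication $\Delta_X$ as the section together with the counit axiom $\varepsilon_{G(X)}\circ\Delta_X=\id_{G(X)}$, and part (2) uses the section $G(p)\circ s\circ i$ and the naturality square $\varepsilon_X\circ G(p)=p\circ\varepsilon_Y$, exactly as in the paper's argument (which merely phrases the summand as $X\oplus Y$ rather than via abstract retraction data).
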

\begin{proof}
	By definition of a comonad, we have  $\varepsilon_{G(X)}\circ \Delta_X=\id_{G(X)}$. This already proves the first statement.
	To prove the second one, let $X\oplus Y$ be $G$-projective, with $X,Y\in \Cat$, i.e. there is a morphism $s\colon X\oplus Y\to G(X\oplus Y)$
	such that $\varepsilon_{X\oplus Y}\circ s=\id_{X\oplus Y}$. Recall that the counit $\varepsilon \colon G\to \id$ is a natural transformation. Denote the canonical embedding of $X$ into $X\oplus Y$ with $i_X\colon X\to X\oplus Y$ and the canonical projection onto $X$ with $p_X\colon X\oplus Y\to X$. Then it follows that
	\begin{equation}
	\varepsilon_X\circ G(p_X)\circ s\circ i_X=p_X\circ \varepsilon_{X\oplus Y}\circ s\circ i_X=p_X\circ i_X=\id_X,
	\end{equation}
	where the first equality holds because $\varepsilon$ is a natural transformation. Thus, $X$ is $G$-projective with the section $s_X=G(p_X)\circ s\circ i_X$.
\end{proof}

Using Lemma~\ref{lemma:directsummands} and Definition~\ref{definition:G-projective} of $G$-projective objects we get the corollary:
\begin{Cor}\label{cor:G-proj}
Let $(G,\Delta,\varepsilon)$ be a comonad on an additive category $\Cat$. An object $X$ is $G$-projective if and only if it is a retract of $G(Y)$ for some $Y$, i.e.\ if $X$ can be realised as a direct summand in $G(Y)$.
\end{Cor}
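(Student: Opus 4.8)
The plan is to prove the corollary as a direct packaging of the two facts already established, namely Definition~\ref{definition:G-projective} and Lemma~\ref{lemma:directsummands}. The statement is a biconditional, so I would organize the proof into the two implications. For the ``only if'' direction I assume $X$ is $G$-projective and must exhibit it as a retract of some $G(Y)$; for the ``if'' direction I assume $X$ is a retract of some $G(Y)$ and must conclude that $X$ is $G$-projective. Since the corollary is essentially a reformulation, neither direction should require any genuinely new construction—the content is already contained in the two parts of Lemma~\ref{lemma:directsummands} together with the definition.

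First I would prove the ``if'' direction, as it follows immediately from the cited lemma. Suppose $X$ is a retract of $G(Y)$ for some $Y\in\Cat$, meaning $X$ is (isomorphic to) a direct summand of $G(Y)$. By part~(1) of Lemma~\ref{lemma:directsummands}, the object $G(Y)$ is $G$-projective, and by part~(2) direct summands of $G$-projective objects are again $G$-projective; hence $X$ is $G$-projective. This is the short half.

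For the ``only if'' direction I would take $Y=X$ and produce the retraction directly from the defining data. Assume $X$ is $G$-projective, so by Definition~\ref{definition:G-projective} there is a morphism $s\colon X\to G(X)$ with $\varepsilon_X\circ s=\id_X$. This equation says precisely that $s$ is a section and $\varepsilon_X\colon G(X)\to X$ is the corresponding retraction, so $X$ is a retract of $G(X)$, which is of the form $G(Y)$ with $Y=X$. Thus the two notions coincide.

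I do not expect a substantive obstacle here, since the corollary is a bookkeeping consequence of results proved just above it; the only thing to be careful about is to match the wording ``retract of $G(Y)$'' with the operative notions—reading ``retract'' as ``admits a section/admits a retraction'' in the ``only if'' direction (where $\varepsilon_X$ supplies the retraction and $s$ the section) and as ``direct summand'' in the ``if'' direction (so that part~(2) of Lemma~\ref{lemma:directsummands} applies). In an additive category a split idempotent yields a genuine direct summand, so if one wanted to be fully explicit one could note that $s\circ\varepsilon_X$ need not be idempotent, but this is irrelevant: for the ``only if'' conclusion it suffices that $X$ be a retract of $G(X)$, and for the ``if'' hypothesis one is already handed a direct summand. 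Hence no further argument is needed.
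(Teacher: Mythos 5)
Your proposal is correct and matches the paper exactly: the paper gives no separate argument but states that the corollary follows from Definition~\ref{definition:G-projective} (which, as you note, exhibits a $G$-projective $X$ as a retract of $G(X)$ via $s$ and $\varepsilon_X$) together with the two parts of Lemma~\ref{lemma:directsummands} (which handle the converse). Your closing remark on the retract-versus-direct-summand wording is a sensible clarification and does not change the argument, since the proof of Lemma~\ref{lemma:directsummands}(2) only ever uses the retraction data $p_X\circ i_X=\id_X$.
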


The next lemma provides further examples of $G$-projective objects.
\begin{Lemma}\label{lemma:projectiveisGprojective}
	Given an adjunction $\F\dashv\U$ defining a comonad $G$ on $\Dat$. If the right adjoint $\U$ is faithful, then every projective object in $\Dat$ is also $G$-projective.
\end{Lemma}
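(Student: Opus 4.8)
The plan is to exploit the standard equivalence between faithfulness of a right adjoint and the counit of the adjunction being a componentwise epimorphism, and then to feed this epimorphism into the defining lifting property of a projective object. Recall that the counit $\varepsilon\colon\F\circ\U\to\id_\Dat$ of the adjunction is precisely the counit of the comonad $G=\F\circ\U$, so its components are maps $\varepsilon_D\colon G(D)=\F\U(D)\to D$.

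First I would show that faithfulness of $\U$ forces each $\varepsilon_D$ to be an epimorphism. To this end, recall the adjunction bijection $\Hom_\Dat(\F\U(D),D')\cong\Hom_\Cat(\U(D),\U(D'))$, which sends $h\mapsto\U(h)\circ\eta_{\U(D)}$, where $\eta$ is the unit. For any $f\colon D\to D'$ in $\Dat$, the triangle identity $\U(\varepsilon_D)\circ\eta_{\U(D)}=\id_{\U(D)}$ gives $\U(f\circ\varepsilon_D)\circ\eta_{\U(D)}=\U(f)\circ\U(\varepsilon_D)\circ\eta_{\U(D)}=\U(f)$. Thus, under the bijection, the precomposition map $f\mapsto f\circ\varepsilon_D$ corresponds exactly to $f\mapsto\U(f)$. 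Since the bijection is injective, $f\mapsto f\circ\varepsilon_D$ is injective for every target $D'$ if and only if $f\mapsto\U(f)$ is, that is, if and only if $\U$ is faithful. The former is precisely the statement that $\varepsilon_D$ is an epimorphism.

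Next, given a projective object $P\in\Dat$, I would apply the defining lifting property of projectivity against the epimorphism $\varepsilon_P\colon G(P)\to P$ together with the identity morphism $\id_P\colon P\to P$. Projectivity yields a morphism $s\colon P\to G(P)$ with $\varepsilon_P\circ s=\id_P$. By Definition~\ref{definition:G-projective} this section exhibits $P$ as $G$-projective, which completes the argument.

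The main obstacle is the first step, namely translating faithfulness of $\U$ into the assertion that the counit is componentwise epic; once that translation is established the conclusion is immediate. I would be careful to invoke only the triangle identity and the naturality of the adjunction bijection, so that no hypotheses on $\Dat$ beyond the existence of projective objects and of the relevant epimorphisms are required.
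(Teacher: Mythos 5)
Your proposal is correct and takes essentially the same approach as the paper: show that faithfulness of $\U$ makes every counit component $\varepsilon_D$ an epimorphism, then use the lifting property of the projective object $P$ against $\varepsilon_P$ and $\id_P$ to produce the section $s\colon P\to G(P)$. The only difference is that you prove the counit-epimorphism fact directly from the adjunction bijection and the triangle identity, whereas the paper cites it from MacLane (Sec.~IV.3, Thm.~1 of \emph{Categories for the Working Mathematician}); your inlined argument is the standard proof of that cited result.
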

\begin{proof}
	 Recall that $G$ is equipped with a counit $\varepsilon\colon G\to \id$. We need the technical fact that $\varepsilon_X$ is an epimorphism for every $X\in \Dat$: It follows from \cite[Sec.~IV.3, Thm.~1]{M} that the counit $\varepsilon$ is component-wise an epimorphism if the right adjoint of the involved adjunction is faithful.
	This allows us to use the lifting property of a projective object $P\in \Dat$ to lift $\id_P\colon P\to P$ to $s_P\colon P\to G(P)$ such that $\varepsilon_P\circ s_P=\id_P$:
	\begin{equation*}
	\begin{tikzpicture}
	\matrix (m) [matrix of math nodes,row sep=3em,column sep=4em,minimum width=2em]
	{
		& P \\
		G(P) & P \\
	};
	\path[-stealth]
	
	(m-1-2) edge node [right] {\small $\id_P$} (m-2-2)
	(m-2-1) edge node [below] {\small $\varepsilon_{P}$} (m-2-2)
	(m-1-2) edge [dashed] node [above, sloped] {\small $s_P$} (m-2-1)
	;
	\end{tikzpicture}
	\end{equation*}
	This is just the definition of $G$-projectiveness (compare Definition \ref{definition:G-projective}).
\end{proof}

\subsection{Comonad cohomology}
A comonad on an additive category gives rise to a cohomology theory via the  construction of~\cite{BB}. It uses the notion of $G$-exactness:

\begin{Definition}
	Let $(G,\Delta,\varepsilon)$ be a comonad on an additive category $\Cat$.
	\begin{itemize}
		\item A sequence $X\xrightarrow{i}Y\xrightarrow{j}Z$ in $\Cat$ is called \emph{$G$-exact} if $j\circ i=0$ and 
		\begin{equation}
		\Hom_{\Cat}(G(A),X)\to \Hom_{\Cat}(G(A),Y)\to \Hom_{\Cat}(G(A),Z)
		\end{equation}
		is exact for all $A\in \Cat$.
		\item A sequence
		\begin{equation}
		\ldots \to P_n\to\ldots \to P_1\to P_0\to X\to 0
		\end{equation}
		is called a \emph{$G$-resolution of $X$} if $P_i$ is $G$-projective for $i\geq 0$ and the sequence is $G$-exact.
	\end{itemize}
\end{Definition}

\begin{Definition}\label{definition:BarResolution}
Given a comonad $(G,\Delta, \varepsilon)$ on an additive category $\Cat$ and an object $X\in \Cat$, the following sequence in $\Cat$ is called the \emph{bar resolution of $X$ associated to $G$}:
\begin{equation}\label{eq:barresolution}
\ldots \xrightarrow{d_n} G^n(X)\xrightarrow{d_{n-1}}\dots \xrightarrow{d_{2}}G^2(X)\xrightarrow{d_1}G(X)\xrightarrow{d_0:=\varepsilon_X} X\to 0,
\end{equation}
where 
\begin{equation}\label{eq:barresolutiondelta}
d_n:=\sum_{i=0}^n(-1)^i G^{n-i}\left(\varepsilon_{G^i(X)}\right).
\end{equation}
Given an abelian category $\Dat$ and an additive functor $E\colon\Cat\to\Dat$, the \emph{homology of $X$ associated to $G$ with coefficients in $E$} is defined as the homology of the complex
\begin{equation}\label{eq:comonad-cpx}
\ldots \xrightarrow{E(d_n)} E(G^n(X))\xrightarrow{E(d_{n-1})}\ldots \xrightarrow{E(d_{2})}E(G^2(X))\xrightarrow{E(d_1)}E(G(X))\xrightarrow{} 0.
\end{equation}
We denote the cochain groups by $C_{n}(X,E)_G=E(G^{n+1}(X))$ and the corresponding homology groups by $H_{n}(X,E)_G$ with $n\geq 0$. Similarly, for an additive functor $E\colon\Cat^{op}\to\Dat$ we define cochain complexes and cohomology: $C^{\bullet}(X,E)_G$ and $H^{\bullet}(X,E)_G$.
\end{Definition}

We note that from this definition it follows that $H^{\bullet}(X,E)_G$ is functorial in the variable~$X$ (by using naturality of $d_n$) and in the variable $E$, as stated in~\cite[p.3]{BB}.

The following statement was proven in~\cite{BB}, and we give a proof for completeness.

\begin{Lemma}\label{lemma:barresolution}
	The bar resolution is a $G$-resolution.
\end{Lemma}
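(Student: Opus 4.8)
The plan is to verify the two defining properties of a $G$-resolution for the bar complex \eqref{eq:barresolution}: that each term is $G$-projective, and that the augmented complex is $G$-exact. The first property is immediate: the $n$-th term is $G^{n+1}(X)=G\bigl(G^n(X)\bigr)$, which is an object in the image of $G$, hence $G$-projective by the first part of Lemma~\ref{lemma:directsummands}. I would also record that \eqref{eq:barresolution} is genuinely a complex, i.e.\ $d_{n-1}\circ d_n=0$; writing $\partial_i^{(n)}:=G^{n-i}(\varepsilon_{G^i(X)})$ so that $d_n=\sum_{i=0}^n(-1)^i\partial_i^{(n)}$, this follows from the simplicial identities $\partial_i^{(n-1)}\partial_j^{(n)}=\partial_{j-1}^{(n-1)}\partial_i^{(n)}$ for $i<j$, which are themselves consequences of the naturality of $\varepsilon$, together with the standard cancellation of the alternating sum.

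The substance of the statement is $G$-exactness. Fixing $A\in\Cat$ and applying $\Hom_\Cat(G(A),-)$, it suffices to exhibit a contracting homotopy of the augmented cochain complex with terms $C_n:=\Hom_\Cat\bigl(G(A),G^{n+1}(X)\bigr)$ for $n\ge 0$ and $C_{-1}:=\Hom_\Cat(G(A),X)$, with differential $D_n(f)=d_n\circ f$ and augmentation $D_0=\varepsilon_{X,*}$. The homotopy I would use is built from the comonad structure on the source: set $s(f):=G(f)\circ\Delta_A$, which maps $C_n\to C_{n+1}$ for every $n\ge -1$. The key computations are, first, that for $i\le n$ one has $\partial_i^{(n+1)}\circ G(f)=G\bigl(\partial_i^{(n)}\circ f\bigr)$ because $\partial_i^{(n+1)}=G\bigl(\partial_i^{(n)}\bigr)$ when $n+1-i\ge 1$, so that $\partial_i^{(n+1)}\circ s(f)=s\bigl(\partial_i^{(n)}\circ f\bigr)$; and second, that the top face is absorbed, namely
\begin{equation*}
\partial_{n+1}^{(n+1)}\circ s(f)=\varepsilon_{G^{n+1}(X)}\circ G(f)\circ\Delta_A=f\circ\varepsilon_{G(A)}\circ\Delta_A=f,
\end{equation*}
using naturality of $\varepsilon$ in the second step and the counit axiom $\varepsilon_{G(A)}\circ\Delta_A=\id_{G(A)}$ in the third. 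Summing over faces yields the relation $D_{n+1}\bigl(s(f)\bigr)=s\bigl(D_n(f)\bigr)+(-1)^{n+1}f$, so that $\sigma_n:=(-1)^{n+1}s$ satisfies $D_{n+1}\sigma_n+\sigma_{n-1}D_n=\id$.

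For the augmentation I would check separately that $\varepsilon_{X,*}\bigl(s(g)\bigr)=\varepsilon_X\circ G(g)\circ\Delta_A=g\circ\varepsilon_{G(A)}\circ\Delta_A=g$ for $g\in C_{-1}$, by the same two identities; this both splits the augmentation (giving exactness at the bottom) and is the instance at $n=-1$ of the uniform formula $\sigma_n=(-1)^{n+1}s$. Hence $C_\bullet$ is contractible, so in particular exact, which is exactly the $G$-exactness required by the definition. The main obstacle is purely organizational rather than conceptual: one must choose the homotopy on the \emph{source} side (via $\Delta_A$) rather than the target side so that it extends across the augmentation, and then keep careful track of the index shift $\partial_i^{(n+1)}=G(\partial_i^{(n)})$ and of the signs $(-1)^{n+1}$ so that the boundary and top-face contributions telescope to the identity.
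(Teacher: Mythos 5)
Your proposal is correct and follows essentially the same route as the paper's proof: $G$-projectivity of each term via part (1) of Lemma~\ref{lemma:directsummands}, and $G$-exactness via the contracting homotopy $f\mapsto \pm\,G(f)\circ\Delta_A$, with the same two key identities (naturality of $\varepsilon$ and the counit axiom) producing the surviving top-face term. Your sign bookkeeping matches the paper's $h_n(f)=(-1)^n G(f)\circ\Delta_A$ up to the index shift in labeling the cochain groups, and your extra checks (that $d_{n-1}\circ d_n=0$ and that the homotopy extends across the augmentation) are harmless refinements of the same argument.
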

\begin{proof}
	Every object in the sequence (except possibly $X$) is $G$-projective by Lemma~\ref{lemma:directsummands} (1). 
It is $G$-exact as well, as can be seen as follows: For $A\in \Cat$ and for the complex of abelian groups
	\begin{equation}\label{eq:complextrivial}
	\dots \to\Hom_{\Cat}\left(G(A),G^{n+1}(X)\right)\xrightarrow{d^*_{n}} \Hom_{\Cat}\left(G(A),G^n(X)\right)\xrightarrow{d^*_{n-1}} \Hom_{\Cat}\left(G(A),G^{n-1}(X)\right)\to \dots,
	\end{equation}
	with $d_n^*(f)=d_n\circ f$, we define a family of maps
	\begin{equation}
	h_n\colon\Hom_{\Cat}\left(G(A),G^n(X)\right)\xrightarrow{} \Hom_{\Cat}\left(G(A),G^{n+1}(X)\right)
	\end{equation}
	via $h_n(f):=(-1)^{n}G(f)\circ \Delta_A$.
	A simple calculation shows that this is a homotopy contraction:
	\begin{align}
	\left(d^*_{n}\circ h_n+h_{n-1}\circ d^*_{n-1}\right)(f)=&\sum_{i=0}^{n}(-1)^{n+i}G^{n-i}\left(\varepsilon_{G^i(X)}\right)\circ G(f)\circ\Delta_A\nonumber\\
	&+(-1)^{n-1}G\left(\sum_{i=0}^{n-1}(-1)^{i}G^{n-1-i}\left(\varepsilon_{G^i(X)}\right)\circ f\right)\circ \Delta_A\nonumber\\
	=&(-1)^{2n}\varepsilon_{G^{n}(X)}\circ G(f)\circ \Delta_A\nonumber\\
	=&f\circ \varepsilon_{G(A)}\circ \Delta_A=f,
	\end{align}
	where the first equality in the last  line  is due to naturality of $\varepsilon$, while the last equality is by  the counit axiom of a comonad.
	 The existence of a homotopy contraction implies that the complex is quasi-isomorphic to the zero complex.
\end{proof}
\begin{Example}[Hochschild cohomology]
	Hochschild cohomology provides an example of a comonad cohomology. For an associative algebra $A$ over a commutative ring $k$, consider the adjunction for the forgetful functor $\mathcal{U}\colon A\otimes A^{op}\mathrm{-mod}\to k\mathrm{-mod}$ and its left adjoint. This adjunction yields a comonad on $A\otimes A^{op}\mathrm{-mod}$ that is defined as follows:
	\begin{equation}
	G(V):=A\otimes A^{op}\otimes V,
	\end{equation}
	with the counit $\varepsilon_V\colon a\otimes v \mapsto a.v$, for $a\in A\otimes A^{op}$ and $v\in V$. We also note that in this case a module is $G$-projective if and only if it is projective in $A\otimes A^{op}\mathrm{-mod}$.
	
	It is easy to check that the  bar resolution~\eqref{eq:barresolution} is the (standard)  bar resolution of the $A\otimes A^{op}$-module $X$, see also~\cite[Sec.\,8.6.12]{Weibel}.
	Therefore, applying the coefficient functor $\Hom_{A\otimes A^{op}}(?,M)$ for an $A\otimes A^{op}$-module $M$ to the bar resolution~\eqref{eq:barresolution} with $X=A$ and taking cohomology yields $\mathrm{Ext}^{\bullet}_{A\otimes A^{op}}(A,M)$ which is the Hochschild cohomology of $A$ with coefficients in $M$.
\end{Example}

The following statements are proven in \cite[Sec.~4.2 \& Sec.~4.3]{BB}.
\begin{Proposition}\label{theorem:G-acyclicity}
		Let $(G,\Delta,\varepsilon)$ be a comonad on an additive category $\Cat$. Given a $G$-projective object $P\in \Cat$, then $H^n(P,E)_G=0$ for all $n>0$ and all coefficient functors~$E\colon \Cat \to \Dat$ where $\Dat$ is abelian.
\end{Proposition}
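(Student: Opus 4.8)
The plan is to show that after applying the coefficient functor $E$ the bar complex of $P$ becomes acyclic in positive degrees, and to reduce this to the case of a cofree object $G(Y)$, where a contracting homotopy is available for free from the comonad structure.

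First I would record two inputs already at hand. By Corollary~\ref{cor:G-proj} the $G$-projective object $P$ is a retract of $G(Y)$ for some $Y$; in fact the defining section $s\colon P\to G(P)$ together with $\varepsilon_P$ exhibits $P$ as a retract of $G(P)$, since $\varepsilon_P\circ s=\id_P$. Moreover, as noted after Definition~\ref{definition:BarResolution}, the groups $H^{\bullet}(-,E)_G$ are functorial in the object variable (the bar complex $G^{\bullet+1}(-)$ is functorial by naturality of $\varepsilon$, and $E$ is additive). Hence applying $H^n(-,E)_G$ to $P\xrightarrow{s}G(P)\xrightarrow{\varepsilon_P}P$ realises $H^n(P,E)_G$ as a direct summand (retract) of $H^n(G(P),E)_G$. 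It therefore suffices to prove the vanishing for a cofree object, i.e.\ to show $H^n(G(Y),E)_G=0$ for all $n>0$ and all $Y$.

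For a cofree object $X=G(Y)$ the augmented bar complex $\dots\to G^{2}(X)\xrightarrow{d_1}G(X)\xrightarrow{\varepsilon_X}X\to 0$ is contractible already in $\Cat$: a contracting homotopy $h_{-1}\colon X\to G(X)$ and $h_n\colon G^{n+1}(X)\to G^{n+2}(X)$ can be built from the comultiplication, exactly as the homotopy $f\mapsto (-1)^{n}G(f)\circ\Delta_A$ of Lemma~\ref{lemma:barresolution} but now at the level of objects, the bottom splitting being the identity $\varepsilon_{G(Y)}\circ\Delta_Y=\id_{G(Y)}$ from the counit axiom. Since $E$ is additive it carries the relations $d\circ h+h\circ d=\id$ to the same relations for $E(d)$ and $E(h)$; thus $E$ applied to the bar complex of $G(Y)$ is null-homotopic, hence exact, and its (co)homology vanishes in every positive degree. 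Combining with the previous paragraph, $H^n(P,E)_G$ is a retract of $0$ and therefore $0$ for all $n>0$.

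The main obstacle is the second paragraph: producing an honest contracting homotopy for the cofree bar complex in $\Cat$ and checking the homotopy relations with correct signs, which is where the comonad counit and coassociativity axioms enter. Note that only additivity of $E$ is used, not exactness, since a null-homotopy is preserved by any additive functor; this is what makes the statement hold for arbitrary coefficient functors. An alternative, less computational route would invoke the comparison theorem (Theorem~\ref{theorem:Comparison}): the trivial complex $0\to P\xrightarrow{\id}P$ is itself a $G$-resolution of $P$, because $P$ is $G$-projective and $\Hom_{\Cat}(G(A),-)$ sends the isomorphism $\id_P$ to an isomorphism, so it computes the same cohomology as the bar resolution, which is manifestly zero in positive degrees.
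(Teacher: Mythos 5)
Your proof is correct, and it is worth noting that the paper contains no argument of its own for this proposition: it is quoted from \cite[Sec.~4.2 \& Sec.~4.3]{BB}, so your proposal supplies exactly what the paper delegates to the literature. Both of your routes go through. In the first route, the reduction to the cofree object via the retract $P\xrightarrow{s}G(P)\xrightarrow{\varepsilon_P}P$ and functoriality of $H^n(-,E)_G$ in the object variable (stated in the paper after Definition~\ref{definition:BarResolution}) is sound, and the contracting homotopy you postulate does exist: for $X=G(Y)$ take $h_{-1}:=\Delta_Y$ and $h_n:=G^{n+1}(\Delta_Y)$, with no signs; in $d_{n+1}\circ h_n$ the $i=0$ summand equals $G^{n+1}(\varepsilon_{G(Y)}\circ\Delta_Y)=\id_{G^{n+1}(X)}$ by the counit axiom, while naturality of $\varepsilon$ converts the summands with $1\le i\le n+1$ into $-h_{n-1}\circ d_n$, giving $d_{n+1}\circ h_n+h_{n-1}\circ d_n=\id$. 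Note that this homotopy inserts $\Delta$ at the \emph{innermost} position, so it is not literally ``the homotopy of Lemma~\ref{lemma:barresolution} at the level of objects'' (that one precomposes with $\Delta_A$ on the test object); since you flagged precisely this verification as the remaining work, this is a matter of carrying it out rather than a gap. Additivity of $E$ then preserves the null-homotopy, and positive-degree (co)homology of the truncated complex~\eqref{eq:comonad-cpx} coincides with that of the contractible augmented complex, hence vanishes. Your alternative route is in fact the most economical given what the paper already states: $\cdots\to 0\to P\xrightarrow{\id}P\to 0$ is a $G$-resolution ($P$ is $G$-projective, and $G$-exactness is immediate since $\Hom_{\Cat}(G(A),-)$ sends $\id_P$ to an isomorphism), so the final assertion of Theorem~\ref{theorem:Comparison} that different $G$-resolutions yield isomorphic (co)homologies, combined with Lemma~\ref{lemma:barresolution}, gives $H^n(P,E)_G\cong 0$ for all $n>0$ in one line, again using only additivity of~$E$.
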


The fundamental lemma of homological algebra also generalizes to comonad cohomology:

\begin{Theorem}[Comparison theorem]\label{theorem:Comparison}
	Given a $G$-projective complex (i.e.\ all objects except possibly $X$ are $G$-projective)
	\begin{equation}
	\dots P_1\to P_0\to X
	\end{equation}
	and a $G$-exact complex
	\begin{equation}
	\dots \to Y_1\to Y_0\to Y.
	\end{equation}
	Then, every morphism $f\colon X\to Y$ can be extended to a morphism of complexes
	\begin{equation}
	\begin{tikzpicture}
	\matrix (m) [matrix of math nodes,row sep=3em,column sep=4em,minimum width=2em]
	{
	\textcolor{white}{a}\dots & P_1& P_0& X& 0\\
	\textcolor{white}{a}\dots	& Y_1 & Y_0& Y& 0 \\
	};
	\path[-stealth]
	(m-1-1) edge node [left] {$$} (m-1-2)
	(m-2-1) edge node [left] {$$} (m-2-2)
	(m-1-2) edge node [left] {\small $f_1$} (m-2-2)
	(m-1-2)  edge node [above] {$$} (m-1-3)
	(m-1-3) edge node [left] {\small $f_0$} (m-2-3)
	(m-1-3) edge node [right] {$$} (m-1-4)
	(m-1-4) edge node [below] {$$} (m-1-5)
	(m-2-4) edge node [below] {$$} (m-2-5)
	(m-2-3) edge node [below] {$$} (m-2-4)
	(m-2-2) edge node [below] {$$} (m-2-3)
	(m-1-4) edge node [left] {\small $f$} (m-2-4)
	;
	\end{tikzpicture}
\end{equation}
All extensions are pairwise chain homotopic. In particular, different G-resolutions of the same object lead to isomorphic (co)homologies.
\end{Theorem}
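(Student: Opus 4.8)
The plan is to mirror the classical proof of the comparison theorem in homological algebra, systematically replacing ``projective'' by ``$G$-projective'' and ``exact'' by ``$G$-exact''. The entire argument rests on a single lifting lemma that translates the comonadic hypotheses into the ordinary lifting property one uses in the projective case; once this lemma is in place, both the construction of $f_\bullet$ and the homotopy comparison of two extensions proceed by the usual induction on degree.

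The key lemma I would isolate first reads: if $P$ is $G$-projective and $\pi\colon B\to C$ is a morphism such that $\Hom_\Cat(G(A),B)\to\Hom_\Cat(G(A),C)$ is surjective for all $A$ (in particular for $A=P$), then every $g\colon P\to C$ lifts along $\pi$. Its proof is the heart of the matter: choose a section $s\colon P\to G(P)$ with $\varepsilon_P\circ s=\id_P$ afforded by Definition~\ref{definition:G-projective}, apply the assumed surjectivity to the element $g\circ\varepsilon_P\in\Hom_\Cat(G(P),C)$ to obtain $\phi\colon G(P)\to B$ with $\pi\circ\phi=g\circ\varepsilon_P$, and set $\tilde g:=\phi\circ s$, so that $\pi\circ\tilde g=g\circ\varepsilon_P\circ s=g$. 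The trick here is exactly what I expect to be the main conceptual obstacle: $G$-exactness only provides surjectivity of the $\Hom(G(A),-)$ sequences, so one must first push $g$ up to $G(P)$ via the counit in order to exploit it, and then pull the resulting lift back down via the section $s$. A completely analogous ``cycle version'' covers lifting a map $g$ with $d^Y_n\circ g=0$ through $d^Y_{n+1}$, using $G$-exactness of $Y_{n+1}\to Y_n\to Y_{n-1}$ in place of surjectivity.

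Existence of $f_\bullet$ then follows by induction. For the base step I would apply the lemma to $P=P_0$, $g=f\circ\pi_X\colon P_0\to Y$ and $\pi=\pi_Y\colon Y_0\to Y$, where $\pi_X,\pi_Y$ denote the augmentations; $G$-exactness at $Y$ supplies the needed surjectivity and yields $f_0$. For the inductive step, having built $f_0,\dots,f_n$ with commuting squares, I set $g:=f_n\circ d^P_{n+1}\colon P_{n+1}\to Y_n$, verify $d^Y_n\circ g=f_{n-1}\circ d^P_n\circ d^P_{n+1}=0$ using the induction hypothesis together with $d^P_n d^P_{n+1}=0$, and then lift $g$ through $d^Y_{n+1}$ by the cycle version of the lemma (with $G$-exactness at $Y_n$). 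This produces $f_{n+1}$ with $d^Y_{n+1}\circ f_{n+1}=f_n\circ d^P_{n+1}$, closing the induction.

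For uniqueness up to homotopy I would take the difference $g_\bullet:=f_\bullet-f'_\bullet$ of two extensions, which extends $0\colon X\to Y$, and build a contracting homotopy $h_n\colon P_n\to Y_{n+1}$ by the same induction: setting $h_{-1}=0$, at each stage the map $u:=g_n-h_{n-1}\circ d^P_n$ satisfies $d^Y_n\circ u=0$ (a short computation using that $g_\bullet$ is a chain map and the homotopy relation one degree lower), so $u$ lifts through $d^Y_{n+1}$ to give $h_n$ with $g_n=d^Y_{n+1}h_n+h_{n-1}d^P_n$. Finally, to deduce that different $G$-resolutions of $X$ compute isomorphic (co)homology, I apply the existence statement to $\id_X$ in both directions to obtain comparison maps $\phi$ and $\psi$ between two resolutions $P_\bullet$ and $P'_\bullet$; since both $\psi\circ\phi$ and the identity extend $\id_X$, they are chain homotopic by the uniqueness part, and likewise $\phi\circ\psi\simeq\id$. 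Because any additive coefficient functor $E$ preserves chain homotopies, the induced maps on $H^\bullet(-,E)_G$ are mutually inverse isomorphisms, which is the asserted independence of the chosen resolution.
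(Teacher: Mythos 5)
Your proposal is correct, but there is nothing in the paper to compare it against line by line: the paper does not prove this theorem at all, it simply states that Proposition~\ref{theorem:G-acyclicity} and Theorem~\ref{theorem:Comparison} ``are proven in [BB, Sec.~4.2 \& Sec.~4.3]'' (Barr--Beck). Your argument is essentially the one found in that cited source, i.e.\ the classical comparison-theorem proof transported to the comonadic setting. The one genuinely non-routine point is exactly the lifting lemma you isolate: since $G$-exactness only gives exactness of the sequences $\Hom_{\Cat}(G(A),-)$, one must precompose a given map $g\colon P\to C$ with the counit $\varepsilon_P$ to land in $\Hom_{\Cat}(G(P),C)$, lift there, and then pull back along the section $s\colon P\to G(P)$ provided by Definition~\ref{definition:G-projective}; the identity $\pi\circ\phi\circ s=g\circ\varepsilon_P\circ s=g$ closes the loop. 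Your ``cycle version'' (lifting $g$ with $d^Y_n\circ g=0$ through $d^Y_{n+1}$) is handled by the same trick, and the two inductions — existence of $f_\bullet$ and construction of the homotopy $h_\bullet$ — as well as the final deduction that $G$-resolutions yield isomorphic (co)homology (via $\id_X$, homotopy uniqueness, and the fact that additive functors preserve chain homotopies) are all carried out correctly. In short: your proof is complete and supplies precisely the argument the paper outsources to the literature.
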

For a given monad $T$ on $\Cat$,
we now consider the comonad $G_T$ on $T\mathrm{-mod}$ defined in~\eqref{eq:G_T}. Furthermore, we consider the special case where the contravariant coefficient functor is
 $$
 E= \Hom_{T\mathrm{-mod}}(?,\mathsf{Y}) , 
 $$
 for $\mathsf{Y}\in T\mathrm{-mod}$.
   Then, the  complex~\eqref{eq:comonad-cpx} admits a canonical reformulation. The following proposition was proven in the section ``nonhomogeneous complex'' of \cite[p.~19-21]{B}. 
  
  \begin{Proposition}\label{proposition:MonadComonad}
	Given an additive category $\Cat$, a monad $(T,\mu,\eta)$ on $\Cat$ and two $T$-modules $\mathsf{X}=(X,\beta_X)$ and $\mathsf{Y}=(Y,\beta_Y)$,  then the complex $C^{\bullet}\left(\mathsf{X},\Hom_{T\mathrm{-mod}}(?,\mathsf{Y})\right)_{G}$ for $G=G_T$ is isomorphic to the complex with the cochain groups $\Hom_{\Cat}(T^{n}(X),Y)$, with $n\geq 0$,
	 and  with the differential
	\begin{align}\label{eq:differentialT}
	\partial(f)&:= f\circ T^n\left(\beta_X\right)+\sum_{i=1}^n(-1)^i f\circ T^{n-i}\left(\mu_{T^{i-1}(X)}\right)+(-1)^{n+1}\beta_Y\circ T(f)\ ,
	\end{align}
	where $f\in\Hom_{\Cat}(T^n(X),Y)$.
\end{Proposition}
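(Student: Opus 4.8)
The plan is to match the two complexes degree by degree and then verify that the comonad coboundary becomes \eqref{eq:differentialT} under the matching.

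First, I would identify the cochain groups. Since $\U(\mathsf{X})=X$ and $G_T=\F\circ\U$, iterating \eqref{eq:F} gives $G^{n+1}(\mathsf{X})=\F\big(T^n(X)\big)=\big(T^{n+1}(X),\mu_{T^n(X)}\big)$, so every term of the bar resolution of $\mathsf{X}$ is a \emph{free} $T$-module. Consequently the degree-$n$ cochain group is $\Hom_{T\mathrm{-mod}}\big(\F(T^n(X)),\mathsf{Y}\big)$, and the free-forgetful adjunction $\F\dashv\U$ provides a natural isomorphism $\phi_n\colon\Hom_{T\mathrm{-mod}}\big(\F(T^n(X)),\mathsf{Y}\big)\xrightarrow{\sim}\Hom_{\Cat}\big(T^n(X),Y\big)$, $g\mapsto g\circ\eta_{T^n(X)}$, with inverse $\psi_n\colon f\mapsto\beta_Y\circ T(f)$. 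That these are mutually inverse follows from the unit axiom $\beta_Y\circ\eta_Y=\id_Y$ of $\mathsf{Y}$, the monad identity $\mu_V\circ T(\eta_V)=\id$, naturality of $\eta$, and the fact that $g$ is a module morphism. This yields the asserted identification of cochain groups.

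Second, I would transport the differential. By Definition~\ref{definition:BarResolution} in its contravariant form, the coboundary $C^n\to C^{n+1}$ is precomposition with the bar face map $d_{n+1}\colon G^{n+2}(\mathsf{X})\to G^{n+1}(\mathsf{X})$ of \eqref{eq:barresolutiondelta}. To compute its underlying morphism I would apply $\U$, using that $\U\big(G^k(h)\big)=T^k\big(\U(h)\big)$ on morphisms (as $\U\F=T$ and $G=\F\U$) and that, by \eqref{G-eps-def}, the underlying morphism of $\varepsilon_{G^i(\mathsf{X})}$ is the module action of $G^i(\mathsf{X})$, i.e.\ $\beta_X$ for $i=0$ and $\mu_{T^{i-1}(X)}$ for $i\geq1$. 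This gives
\[
\U(d_{n+1})=T^{n+1}(\beta_X)+\sum_{i=1}^{n+1}(-1)^i\,T^{n+1-i}\big(\mu_{T^{i-1}(X)}\big),
\]
so that for $f\in\Hom_{\Cat}(T^n(X),Y)$ the transported differential is $\phi_{n+1}\big((d_{n+1})^*\psi_n(f)\big)=\beta_Y\circ T(f)\circ\U(d_{n+1})\circ\eta_{T^{n+1}(X)}$.

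Finally, I would simplify this expression term by term. For each face with $0\leq i\leq n$, naturality of $\eta$ slides the unit to the left, e.g.\ $T^{n+1-i}(\mu_{T^{i-1}(X)})\circ\eta_{T^{n+1}(X)}=\eta_{T^n(X)}\circ T^{n-i}(\mu_{T^{i-1}(X)})$ (and likewise $T^{n+1}(\beta_X)\circ\eta_{T^{n+1}(X)}=\eta_{T^n(X)}\circ T^n(\beta_X)$); then $\beta_Y\circ T(f)\circ\eta_{T^n(X)}=\beta_Y\circ\eta_Y\circ f=f$ by naturality of $\eta$ and the $\mathsf{Y}$-module unit axiom, leaving $f\circ T^n(\beta_X)$ for $i=0$ and $(-1)^i f\circ T^{n-i}(\mu_{T^{i-1}(X)})$ for $1\leq i\leq n$. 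The top face $i=n+1$ behaves differently: there $T^{n+1-i}=\id$ and the monad unit axiom $\mu_{T^n(X)}\circ\eta_{T^{n+1}(X)}=\id$ collapses the unit outright, leaving $(-1)^{n+1}\beta_Y\circ T(f)$. Summing the three contributions reproduces \eqref{eq:differentialT} exactly. The main obstacle I anticipate is precisely this bookkeeping: reading off $\U(\varepsilon_{G^i(\mathsf{X})})$ as the correct iterated multiplications and recognizing that the two extreme faces collapse via \emph{different} axioms --- the $\mathsf{Y}$-module unit for $i=0$ and the monad unit for $i=n+1$ --- while the interior faces are treated uniformly by naturality of $\eta$.
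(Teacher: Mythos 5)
Your proposal is correct and follows essentially the same route as the paper's own (sketched) proof: both identify the degree-$n$ cochain group via the free--forgetful adjunction $\Hom_{T\mathrm{-mod}}\bigl(\F(T^n(X)),\mathsf{Y}\bigr)\cong\Hom_{\Cat}\bigl(T^n(X),Y\bigr)$, using $G^{n+1}(\mathsf{X})=\F(T^n(X))$. The paper stops there and merely asserts that this isomorphism is a cochain map (deferring to Beck), whereas you carry out that verification in full, and your bookkeeping is exactly right: reading $\U(\varepsilon_{G^i(\mathsf{X})})$ as $\beta_X$ for $i=0$ and $\mu_{T^{i-1}(X)}$ for $i\geq 1$, sliding $\eta$ past the interior and bottom faces by naturality, and collapsing the two extreme faces by the $\mathsf{Y}$-module unit axiom ($i=0$) and the monad unit axiom ($i=n+1$), respectively.
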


\begin{proof}[Sketch of proof]
	Recall from Definition~\ref{definition:BarResolution} the cochain groups
	\begin{equation}
	 C^{n}\bigl(\mathsf{X},\Hom_{T\mathrm{-mod}}(?,\mathsf{Y})\bigr)_{G}
	 =\Hom_{T\mathrm{-mod}}\bigl(G^{n+1}(\mathsf{X}),\mathsf{Y}\bigr).
	 \end{equation}
	We have the following isomorphism
	\begin{align}
	\Hom_{\Cat}\bigl(T^n(X),Y\bigr)& = \Hom_{\Cat}\bigl(T^n(X),\U(Y,\beta_Y)\bigr)\nonumber\\
	&\cong\Hom_{T\mathrm{-mod}}\bigl(\F(T^n(X)),(Y,\beta_Y)\bigr)\nonumber\\
	&=\Hom_{T\mathrm{-mod}}\bigl(\F\circ (\U\circ \F)\circ\ldots \circ (\U\circ \F(X)), (Y,\beta_Y)\bigr)\nonumber\\
	&=\Hom_{T\mathrm{-mod}}\bigl(\F\circ \U\circ \F\circ\ldots \circ \U\circ \F\circ \U (X,\beta_X), (Y,\beta_Y)\bigr)\nonumber\\
	&=\Hom_{T\mathrm{-mod}}\bigl((\F\circ \U)\circ\ldots \circ (\F\circ \U) (X,\beta_X), (Y,\beta_Y)\bigr)\nonumber\\
	&=\Hom_{T\mathrm{-mod}}\bigl(G^{n+1}(X,\beta_X), (Y,\beta_Y)\bigr)\nonumber\\
	&=C^n(\mathsf{X},\Hom_{T\mathrm{-mod}}(?,\mathsf{Y}))_G,
	\end{align}
	where the only non-trivial map is  the adjunction isomorphism, and  the last equality  is by definition of the cochain groups.
	 One can also check that the above isomorphism is a cochain map.
\end{proof}

\section{Davydov-Yetter cohomology as a comonad cohomology}\label{sec:DY}
In this section, we introduce Davydov-Yetter cohomology with coefficients, thereby generalizing the original notion~\cite{CY,Da,Y1,Y2}. We show that Davydov-Yetter cohomology can be reformulated as comonad cohomology of a generalization of the central Hopf monad (Theorem \ref{theorem:Main}). After providing a detailed proof, we showcase the power of this point of view with a short and conceptual proof of Ocneanu rigidity.

\subsection{Conventions}
Let $k$ denote a field and $\mathrm{Vec}_k$ is the category of finite dimensional $k$-linear vector spaces. A \emph{tensor category} will always mean a rigid, $k$-linear, abelian monoidal category such that the monoidal product is bilinear. We call a category \emph{finite} if it is $k$-linear and equivalent to the category of finite dimensional representations of a finite dimensional $k$-algebra. By a \emph{finite tensor category} we mean a tensor category which is finite as an abelian category.
Notice that we do not assume the tensor unit to be simple in contrast to e.g. \cite{EGNO} or \cite{ENO}. In fact, our definition of a finite tensor category is what is called a finite multi-tensor category in \cite{EGNO}.

Recall that a monoidal category $\Cat$ is called rigid if every object $V\in\Cat$ has a left dual $^{\vee}V$ and a right dual $V^{\vee}$ together with left and right (co)evaluation maps
\begin{align}
\ev_V\colon&V^{\vee}\otimes V\to I\qc \qquad\coev_V\colon I\to V\otimes V^{\vee},\\
\widetilde \ev_V\colon&V\otimes {^{\vee}V}\to I\qc\qquad \widetilde \coev_V\colon {^{\vee}V}\otimes V\to I,
\end{align}
satisfying the standard axioms.
We will use the following graphical notations:
\begin{align}\label{eq:ev-coev-gr}
\ev_V &~=\hspace*{.7em}   \ipic{eval}{.25}  
\put(-34,-22){\scriptsize $V^{\vee}$} \put(-4,-22){\scriptsize $V$} \qc&
\coev_{V} &~=\hspace*{.7em} \ipic{coeval}{.25}  
\put(-34,16){\scriptsize $V$} \put(-5,16){\scriptsize $V^{\vee}$}  \qc&\\ 
\widetilde\ev_V &~=\hspace*{.7em} \ipic{eval-tw}{.25} 
\put(-34,-22){\scriptsize $V$} \put(-6,-22){\scriptsize $^{\vee}V$} 
\qc
& 
\widetilde\coev_V &~=\hspace*{.7em} \ipic{coeval-tw}{.25} 
\put(-38,16){\scriptsize $^{\vee}V$}
\put(-5,16){\scriptsize $V$} 
\qp\notag
\end{align}
Here, string diagrams must be read upwards. 
General morphisms will be presented by coupons, see e.g. Remark~\ref{remark:graphicalDY}.

A tensor functor $F\colon\Cat\to\Dat$ between tensor categories is a $k$-linear monoidal functor, i.e.\ equipped with a natural isomorphism $\psi_{V,W}\colon F(V)\otimes F(W)\to F(V\otimes W)$ and an  isomorphism $\eta\colon I_{\Dat}\to F(I_{\Cat})$ satisfying the usual commuting diagrams.
 Often, if it follows from the context, we supress the subscript and use the notation $I$ for both monoidal units $I_{\Cat}$ and~$I_{\Dat}$.
Given a functor $F\colon\Cat\to\Dat$, we denote via 
\begin{equation}
F^{\times n}\colon\Cat\times\dots \times\Cat\to\Dat\times\dots \times\Dat, \qquad n\geq 0,
\end{equation}
 the functor that is defined by applying $F$ component-wise, and where $F^{\times 0}$ is the identity endofunctor on $\mathrm{Vec}_k$.
 We reserve $F^n$ for the composition $F\circ\dots \circ F$, assuming $\Cat=\Dat$. By slight abuse of this notation, we denote with 
 \begin{equation}
 \otimes^n\colon\Cat\times\dots \times \Cat\to \Cat
 \end{equation}
  the functor that acts on objects $X_1,\dots,X_n\in \Cat$ as 
  $$
  \otimes^n(X_1,\ldots,X_n)=X_1\otimes(X_2\otimes(\ldots\otimes X_n)\ldots),
  $$
for $n\geq 2$. Furthermore, we use the convention $\otimes^1=\id_{\Cat}$ and $\otimes^0:\mathrm{Vec}_k \to \Cat$ is the additive functor that sends the ground field $k$ to the tensor unit in $\Cat$.

As usual, we denote ends and coends via the integral notation, i.e. an end and a coend of a functor  
$J\colon\Cat^{op}\times \Cat\to\Dat$ are denoted respectively by 
\begin{equation}
 \int_{X\in\Cat}J(X,X)\;\;\;\;\;\;\text{and}\;\;\;\;\;\;\;\int^{X\in\Cat}J(X,X).
\end{equation}

\subsection{Davydov-Yetter cohomology with coefficients}\label{ssec:DY_coefficients}
Davydov-Yetter cohomology for a monoidal functor targeting a tensor category was developed in \cite{Y1} and \cite{Y2} based on work in \cite{CY} and independently in \cite{Da}. We will introduce the case of a more general complex with `coefficients'. These will be objects in the centralizer of a monoidal functor (compare also \cite[Sec.~3]{Sh2}).

\begin{Definition}
\label{def:half-br}
	Let $F\colon\Cat\to\Dat$ be a monoidal functor between monoidal categories and $X\in \Dat$. We say that a natural isomorphism $\rho^X\colon X\otimes F(?)\to F(?)\otimes X$ is a \emph{half-braiding relative to $F$} if the diagram
	\begin{equation}\label{eq:halfbraiding}
	\begin{tikzpicture}
	\matrix (m) [matrix of math nodes,row sep=3em,column sep=5em,minimum width=2em]
	{
		X\otimes F(V)\otimes F(W)& X\otimes F(V\otimes W)\\
		F(V)\otimes X\otimes F(W) &   \\
		F(V)\otimes F(W)\otimes X& F(V\otimes W)\otimes X \\
	};
	\path[-stealth]
	
	(m-1-1)  edge node [above] {\small $\id_X\otimes\psi_{V,W}$} (m-1-2)
	(m-1-1) edge node [left] {\small $\rho^X_V\otimes \id_{F(W)}$} (m-2-1)
	(m-2-1) edge node [left] {\small $\id_{F(V)}\otimes \rho^X_{W}$} (m-3-1)
	(m-3-1) edge node [below] {\small $\psi_{V,W}\otimes \id_X$} (m-3-2)
	(m-1-2) edge node [right] {\small $\rho^X_{V\otimes W}$} (m-3-2)
	;
	\end{tikzpicture}
	\end{equation}
	commutes for all $V,W\in \Cat$, where for simplicity we assumed that $\Dat$ is strict.
	\end{Definition}
\newcommand{\trivbr}{\sigma}

	\begin{Definition}
\label{def:centralizer}
 The \emph{centralizer} $\mathcal{Z}(F)$ of $F$ is the category where objects are pairs $(X,\rho^X)$ with $X$ an object in $\mathcal{D}$ and $\rho^X$ a half-braiding relative to $F$.
  The morphisms $f\colon(X,\rho^X)\to (Y,\rho^Y)$ are morphisms $f\colon X\to Y$ in $\Dat$ such that the diagram
	\begin{equation}\label{eq:diag-centrF-morphism}
	\begin{tikzpicture}
	\matrix (m) [matrix of math nodes,row sep=3em,column sep=4em,minimum width=2em]
	{
		X\otimes F(V)&  F(V)\otimes X\\
		Y\otimes F(V) & F(V)\otimes Y \\
	};
	\path[-stealth]
	
	(m-1-1)  edge node [above] {\small $\rho^X_V$} (m-1-2)
	(m-1-1) edge node [left] {\small $f\otimes \id_{F(V)}$} (m-2-1)
	(m-1-2) edge node [right] {\small $\id_{F(V)}\otimes f$} (m-2-2)
	(m-2-1) edge node [below] {\small $\rho^Y_V$} (m-2-2)
	;
	\end{tikzpicture}
	\end{equation}
	commutes for all $V\in\Cat$.
The special case of $\Cat=\Dat$ and $F=\id$ is called \emph{Drinfeld center of $\Cat$} and denoted by $\mathcal{Z}(\Cat)$.
\end{Definition}
It is well known that the category $\mathcal{Z}(F)$ admits the canonical structure of a monoidal category~\cite{Maj2,Sh2}. 
In particular, the tensor unit in $\mathcal{Z}(F)$ is $I=I_{\Dat}$ together with 
the half-braiding  
\begin{equation}
\trivbr_X\colon I\otimes F(X)\xrightarrow{\;\cong\;} F(X)\xrightarrow{\;\cong\;} F(X)\otimes I.
\end{equation}
We will denote  the tensor unit in 	$\mathcal{Z}(F)$ 
by $\mathsf{I}=(I,\trivbr)$.

From now on for brevity, we will supress coherence isomorphisms of monoidal categories and functors, that is, we work with strict monoidal categories and monoidal functors. 
\begin{Definition}
	[Davydov-Yetter complex]\label{definition:DY}
	Let $F\colon\Cat\to \Dat$  be a monoidal functor, where $\Cat$~is a monoidal category and $\Dat$ is a tensor category and let 
	$$
	\mathsf{X}=(X,\rho^X),\; \mathsf{Y}=(Y,\rho^Y)\in \mathcal{Z}(F).
	$$
	  The \emph{Davydov-Yetter complex of $F$ with coefficients $\mathsf{X}$ and  $\mathsf{Y}$} and denoted by $C_{DY}^{\bullet}(F,\mathsf{X},\mathsf{Y})$ consists of the following data:
	\begin{itemize}
		\item Cochain vector spaces for 
$n\geq 0$:
		\be
		C^{n}_{DY}(F,\mathsf{X},\mathsf{Y}):=\Nat\left(X\otimes(\otimes^n \circ F^{\times n}),
		(\otimes^n\circ F^{\times n})\otimes Y\right).
		\ee
		\item Differential
		\begin{align}\label{equation:DYdifferential}
		\delta^n(f)_{X_0,\dots,X_n}:= &\left(\id_{F(X_0)}\otimes f_{X_1,\dots,X_n}\right)\circ \left(\rho^X_{X_0}\otimes \id_{F(X_1)\otimes\dots \otimes F(X_n)}\right)+\nonumber\\
		&+\sum_{i=1}^n (-1)^if_{X_0,\dots,X_{i-1}\otimes X_i,\dots ,X_n}+\\
		&+(-1)^{n+1}\left(\id_{F(X_0)\otimes\dots \otimes F(X_{n-1})}\otimes \rho_{X_n}^Y\right)\circ \left(f_{X_0,\dots ,X_{n-1}}\otimes\id_{F(X_n)}\right)\nonumber.
		\end{align}
		\end{itemize}
Here, for $n=0$ the cochain spaces are $C^0_{DY}(F,\mathsf{X},\mathsf{Y})= \Hom_{\mathcal{D}}(X,Y)$,  recall   our conventions on $\otimes^0$ and $F^{\times 0}$, and the differential takes the form
\begin{equation}
\delta^0(f)_{X_0}:=\left(\id_{F(X_0)}\otimes f\right)\circ \rho^X_{X_0}- \rho^Y_{X_0}\circ \left(f\otimes\id_{F(X_0)}\right).
\end{equation}
		For the following complexes, we also use the  notations 
		 $$
		 C^{\bullet}_{DY}(F):=C^{\bullet}_{DY}(F,\mathsf{I},\mathsf{I})\ , \qquad
	C^{\bullet}_{DY}(\Cat,\mathsf{X},\mathsf{Y}):=C^{\bullet}_{DY}(\id_{\Cat},\mathsf{X},\mathsf{Y})\ , \qquad
		C^{\bullet}_{DY}(\Cat):=C^{\bullet}_{DY}(\id_{\Cat}),
		$$
		 and call them \emph{Davydov-Yetter complex of $F$},
		  and \emph{Davydov-Yetter complex of $\Cat$ with coefficients in $\mathsf{X}$ and $\mathsf{Y}$},
		  and \emph{Davydov-Yetter complex of $\Cat$}, respectively.
\end{Definition}
The fact that the right hand side of~\eqref{equation:DYdifferential} is a natural transformation follows from naturality of $f$ and naturality of the half-braidings $\rho^X$ and $\rho^Y$.
It is also straightforward to check that $\delta^{n+1}\circ\delta^n=0$. The statement for trivial coefficients is well-known 
 \cite{Da,Y1}, while
  the general case follows by a very similar calculation and  using the half-braiding property~\eqref{eq:halfbraiding}.
\begin{Definition}[Davydov-Yetter cohomology]\label{definition:DYcoh}
	The cohomology of the cochain complex $C_{DY}^{\bullet}(F,\mathsf{X},\mathsf{Y})$ is called \emph{Davydov-Yetter cohomology}\footnote{We also use shorter \textit{DY cohomology}.} and denoted by 
	$$
	H^{\bullet}_{DY}(F,\X,\Y):=H^{\bullet}\bigl(C^{\bullet}_{DY}(F,\X,\Y)\bigr).
	$$
	 We denote the special cases by 
	 $$
	 H^{\bullet}_{DY}(F):=H^{\bullet}_{DY}(F,\mathsf{I},\mathsf{I})\ , \qquad 
	H^{\bullet}_{DY}(\Cat,\mathsf{X},\mathsf{Y}):=H^{\bullet}_{DY}(\id_{\Cat},\mathsf{X},\mathsf{Y})\ ,
	\qquad H^{\bullet}_{DY}(\Cat):=H^{\bullet}_{DY}(\id_{\Cat}).
	 $$
\end{Definition}
\begin{Remark}
In the non-strict version of \eqref{equation:DYdifferential}, the coherence isomorphisms of $\Cat, \Dat$ and $F$ can be inserted without much additional effort. For a formulation with coherence isomorphisms and trivial coefficients, we refer to \cite{Y1} and \cite{Y2}.
\end{Remark}
\begin{Remark}\label{remark:graphicalDY}
	The differential defining the Davydov-Yetter complex in \eqref{equation:DYdifferential} can be written using graphical notation:
	\begin{align}
	\delta^n(f)_{X_0,...,X_n}=&\;\; \ipic{coupon-DY1}{0.25}\put(-98,-35){\tiny $X$}\put(-83,-35){\tiny $F(X_0)$}\put(-60,-35){\tiny $F(X_1)$}\put(-20,-35){\tiny $F(X_n)$}\put(-45,6){\scriptsize $f$}\put(-90,-16){\tiny $\rho^X_{X_0}$}\put(-103,30){\tiny $F(X_0)$}\put(-68,30){\tiny $F(X_1)$}\put(-38,30){\tiny $F(X_n)$}\put(-10,30){\tiny $Y$}\put(-45,23){\tiny $\ldots$}\put(-39,-15){\tiny $\ldots$}\;\;+\;\;
	\sum_{i=1}^n(-1)^i\;\ipic{coupon-DY2}{0.25}\put(-82,-35){\tiny $X$}\put(-67,-35){\tiny $F(X_{i-1}\otimes X_i)$}\put(-15,-35){\tiny $F(X_n)$}\put(-50,0){\scriptsize $f$}\put(-70,30){\tiny $F(X_0)$}\put(-40,30){\tiny $F(X_n)$}\put(-10,30){\tiny $Y$}\put(-65,-20){\tiny $\ldots$}\put(-30,-20){\tiny $\ldots$}\put(-45,20){\tiny $\ldots$}\;\;\nonumber\\
	&+(-1)^{n+1}\;\ipic{coupon-DY3}{0.25}\put(-90,-40){\tiny $X$}\put(-78,-40){\tiny $F(X_0)$}\put(-43,-40){\tiny $F(X_{n-1})$}\put(-10,-40){\tiny $F(X_{n})$}\put(-78,35){\tiny $F(X_0)$}\put(-5,35){\tiny $Y$}\put(-32,35){\tiny $F(X_n)$}\put(-50,-7){\scriptsize $f$}\put(-17,18){\tiny $\rho^Y_{X_n}$}\put(-55,-25){\tiny $\ldots$}\put(-50,15){\tiny $\ldots$} \ .
	\end{align}
\end{Remark} 
\begin{Remark}
	As it is often the case in cohomology theories, low degrees of Davydov-Yetter cohomology have concrete interpretations~\cite{CY,Da,Y1}. 
	In particular, 
\begin{itemize}
\item
 $H^0_{DY}(F, \mathsf{X},\mathsf{Y})$  consists of  those elements in $\Hom_{\mathcal{D}}(X,Y)$ which are also morphisms in the \emph{centralizer} $\mathcal{Z}(F)$, recall~\eqref{eq:diag-centrF-morphism};
\item
$H^1_{DY}(F)$  consists of   derivations of $F$: $\eta\in \Nat(F,F)$ such that 
$$
\eta_{X\otimes Y} = \eta_X\otimes \id + \id\otimes \eta_Y
$$
 modulo the inner derivations of $F$. By inner derivations here we mean those derivations $\eta$ that can be written as $\eta_X = f\otimes \id_{F(X)} - \id_{F(X)}\otimes f$ for some $f\in \mathrm{End}_{\Dat}(I_\Dat)$;
\item 
	 $H^2_{DY}(F)$ classifies first order infinitesimal deformations of the monoidal structure of~$F$ up to equivalence.
	 Obstructions to  extensions of them to finite deformations live in $H^3_{DY}(F)$;
	 \item
$H^3_{DY}(\Cat)$ classifies up to equivalence first order infinitesimal	
deformations of the associator of a tensor category~$\Cat$,  and obstructions are controlled by $H^4_{DY}(\Cat)$.
\end{itemize}
\end{Remark}

\subsection{The central monad and its variants}\label{ssec:CentralMonad}
Let $F\colon\Cat\to \Dat$ be a strict monoidal functor between strict rigid monoidal categories $\Cat$ and~$\Dat$. If for every $V\in \Dat$ the object
\begin{equation}\label{eq:ZF-def}
Z_{F}(V):=\int^{X\in\Cat}F(X)^{\vee}\otimes V\otimes F(X)
\end{equation}
exists, 
then the functor $Z_F(?)\colon \Dat\to \Dat$ has the natural structure of a monad~\cite{DS,Sh2}.
 Indeed, let 
\begin{equation}\label{iF-def}
 i^F_X(V)\colon \;F(X)^{\vee}\otimes V\otimes F(X)\to Z_F(V)
\end{equation}
 denote the universal dinatural transformation associated to $V\in\Dat$. We know from the Fubini theorem for coends~\cite[Prop.~IX.8]{M} that 
 the object
 \be
Z_F^2(V):=(Z_F\circ Z_F)(V) = \int^{(X,Y)\in\Cat\times\Cat}F(Y)^{\vee}\otimes F(X)^{\vee}\otimes V\otimes F(X) \otimes F(Y)
\ee 
exists and is a coend  with the universal dinatural transformation 
$$
 i^{(2)}_{(X,Y)}(V)\colon \; (FY)^{\vee}\otimes (FX)^{\vee}\otimes V\otimes FX\otimes FY\to Z_F^2(V)
$$
defined as
\be\label{eq:i2}    
  i^{(2)}_{(X,Y)}(V)= i^F_{Y}\bigl(Z_F(V)\bigr)\circ\bigl(\id_{(FY)^{\vee}}\otimes i^F_{X}(V)\otimes \id_{FY}\bigr),
\ee
where for brevity we replace $F(X)$ by $FX$, etc.
Recall that $F$ is a (strict) monoidal functor, therefore we have  the dinatural transformation 
\begin{equation}
i^F_{X\otimes Y}(V)\colon \; (FY)^{\vee}\otimes (FX)^{\vee}\otimes V\otimes FX\otimes FY\to Z_F(V).
\end{equation}
 Then, the multiplication for $Z_F$ is defined as the unique family of morphisms 
 $$
 \mu^F_V\colon Z_F^2(V)\to Z_F(V)
 $$
  such that 
 \begin{equation}\label{eq:muF-def}
 \mu^F_V\circ i^{(2)}_{(X,Y)}(V)=i^F_{X\otimes Y}(V).
 \end{equation}
  Furthermore, the unit is defined as
 \be\label{eq:def-eta}
 \eta^F_V\colon V\to Z_F(V), \quad \eta^F_V:=i^F_{I_{\Dat}}(V).
 \ee
 
\begin{Definition}
	The above defined monad $(Z_F,\mu^F,\eta^F)$ is called the \textit{central monad of the monoidal functor $F$}. 
\end{Definition}
\begin{Remark}
 For $F=\id$, we denote $(Z,i):=(Z_{\id}, i^{\id})$. This special case is called \textit{the central monad of the category $\Cat$}.
\end{Remark}
The central monad always exists for exact tensor functors $F\colon\Cat\to\Dat$ between finite tensor categories.
 This follows from the following fact proven in \cite[Cor.~5.1.8.]{KL}:
Let $\Cat$ and $\Dat$ be finite $k$-linear, abelian categories and $J\colon\Cat^{op}\times\Cat\to\Dat$ a functor that is $k$-linear and exact in each variable, then the coend $\int^{X\in\Cat}J(X,X)$ exists. 
Thus, for $J(X,Y)= F(X)^{\vee}\otimes V\otimes F(Y)$ we obtain that $Z_F$ exists.

The monad $Z_F$ can be further equipped with the structure of a bimonad. We recall that a monad $T$ is called \textit{bimonad}
if it admits  a natural transformation $\Psi_{V,W}\colon T(V\otimes W)\to T(V)\otimes T(W)$ and a morphism $\alpha\colon T(I)\to I$
satisfying axioms of a comonoidal functor  (for details, see e.g.~\cite[Sec.~2]{BV1}).
A bimonad structure on a $k$-linear endofunctor $T$ is equivalent to   the structure of a $k$-linear monoidal category on $T\mathrm{-mod}$ such that the forgetful functor $T\mathrm{-mod}\to \Cat$ is strict monoidal. 
Here, the tensor unit is $(I,\alpha)$ and it will  be denoted by  $\mathsf{I}$.
For $T=Z_F$, the structural morphism $\alpha\colon Z_F(I)\to I$ that we will denote by $\alpha^F$ is the unique morphism satisfying
\begin{equation}\label{eq:alpha}
\alpha^F\circ i^F_X(I):= \mathrm{ev}_{F(X)}.
 \end{equation}
 The comultiplication $\Psi^F$ for $Z_F$ is the unique natural transformation fixed by
 \begin{equation}\label{eq:def_comultiplication}
 \Psi^F_{V,W}\circ i^F_X(V\otimes W)=\left(i^F_X(V)\otimes i^F_X(W)\right)\circ \left(\id_{(FX)^{\vee}}\otimes \id_V\otimes \coev_{FX}\otimes \id_W\otimes \id_{FX}\right).
 \end{equation}
Furthermore, $Z_F\mathrm{-mod}$ is rigid~\cite{Sh2} and thus $Z_F$ is a \emph{Hopf monad}~\cite{BV1}.

From here on, we will supress the superscript in the structural maps if the functor $F$ is clear from the context.

The central Hopf monad $Z_F$ of $F$ is closely related to the centralizer $\mathcal{Z}(F)$ from Definition~\ref{def:centralizer}. 
The following can be found in~\cite[Thm.~5.12]{BV2} for $F=\id$ and for general case in~\cite[Lem.~3.3]{Sh2} .

\begin{Proposition}\label{theorem:equivalencecentralizer}
	Let $F\colon\Cat\to \Dat$  be a tensor functor between finite tensor categories such that $Z_F$ exists. Its centralizer $\mathcal{Z}(F)$ is  isomorphic as a tensor category to $Z_F\mathrm{-mod}$.
\end{Proposition}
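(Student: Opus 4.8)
The plan is to construct an explicit isomorphism of categories $\Phi\colon Z_F\mathrm{-mod}\to\mathcal{Z}(F)$ which is the identity on underlying objects and on morphisms of $\Dat$, and then to promote it to a (strict) monoidal isomorphism. The heart of the matter is a bijection, for each fixed $V\in\Dat$, between $Z_F$-module structures $\beta_V\colon Z_F(V)\to V$ and half-braidings $\rho^V\colon V\otimes F(?)\to F(?)\otimes V$ relative to $F$. First I would invoke the universal property of the coend defining $Z_F$: a morphism $\beta_V\colon Z_F(V)\to V$ is the same datum as a dinatural family $b_X:=\beta_V\circ i^F_X(V)\colon F(X)^{\vee}\otimes V\otimes F(X)\to V$. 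By rigidity of $\Dat$, the duality adjunction for $F(X)$ gives a natural bijection
\[
\Hom_{\Dat}\bigl(F(X)^{\vee}\otimes V\otimes F(X),\,V\bigr)\;\cong\;\Hom_{\Dat}\bigl(V\otimes F(X),\,F(X)\otimes V\bigr),
\]
explicitly $\rho^V_X=(\id_{F(X)}\otimes b_X)\circ(\coev_{F(X)}\otimes\id_{V\otimes F(X)})$, with inverse $b_X=(\ev_{F(X)}\otimes\id_V)\circ(\id_{F(X)^{\vee}}\otimes\rho^V_X)$; the two snake identities guarantee these are mutually inverse. Under this dictionary, dinaturality of $(b_X)_X$ in $X\in\Cat$ translates, using that $F$ is a functor, precisely into naturality of $\rho^V$.

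Next I would match the module axioms with the half-braiding axioms. The unit axiom $\beta_V\circ\eta^F_V=\id_V$, together with $\eta^F_V=i^F_{I}(V)$ from \eqref{eq:def-eta} and strictness $F(I)=I$, should translate directly into the normalization $\rho^V_I=\id$ required in Definition~\ref{def:half-br}. The multiplication axiom $\beta_V\circ\mu^F_V=\beta_V\circ Z_F(\beta_V)$ is the substantial condition: unfolding $\mu^F$ via \eqref{eq:muF-def}, using the Fubini description \eqref{eq:i2} of $i^{(2)}_{(X,Y)}$, and then precomposing both sides with $i^{(2)}_{(X,Y)}(V)$ to strip off the universal dinatural maps, I expect to recover exactly the hexagon \eqref{eq:halfbraiding} for $\rho^V$ after transporting along the duality adjunction above. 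I anticipate that this compatibility is the main obstacle of the proof, as it requires careful bookkeeping of the dinatural transformations and repeated appeals to the coend universal property (in both $Z_F$ and $Z_F^2$) to remove the $i^F_X$'s symmetrically on both sides.

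Since $\Phi$ acts as the identity on underlying morphisms, a morphism $f\colon V\to W$ in $\Dat$ is a morphism of $Z_F$-modules if and only if it intertwines $b^V_X$ and $b^W_X$; transporting this condition through the same adjunction shows it is equivalent to the centralizer-morphism diagram \eqref{eq:diag-centrF-morphism}. Hence $\Phi$ is fully faithful, and because the object-level correspondence above is a bijection for each fixed $V$, it is bijective on objects, so $\Phi$ is an isomorphism of categories.

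It remains to verify that $\Phi$ is monoidal. Here I would compare the tensor product on $Z_F\mathrm{-mod}$ coming from the Hopf-monad (comonoidal) structure $(\Psi^F,\alpha^F)$ with the canonical tensor product of objects with half-braidings in $\mathcal{Z}(F)$. Concretely, the module structure on $V\otimes W$ is built from $\beta_V$, $\beta_W$ and $\Psi^F_{V,W}$; feeding the defining relation \eqref{eq:def_comultiplication} for $\Psi^F$ through the duality dictionary, I expect the resulting half-braiding to be the composite $(\rho^V_X\otimes\id_W)\circ(\id_V\otimes\rho^W_X)$, i.e.\ exactly the half-braiding that $\mathcal{Z}(F)$ assigns to $\mathsf{V}\otimes\mathsf{W}$. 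Likewise, \eqref{eq:alpha} identifies the unit $\mathsf{I}=(I,\alpha^F)$ with the trivial half-braiding $\trivbr$. Together these exhibit $\Phi$ as a strict monoidal isomorphism, completing the proof.
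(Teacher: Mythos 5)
Your proposal is, in substance, exactly the construction the paper records: the paper does not prove Proposition~\ref{theorem:equivalencecentralizer} from scratch but cites \cite[Thm.~5.12]{BV2} and \cite[Lem.~3.3]{Sh2}, and then summarizes precisely your dictionary --- diagram~\eqref{equation:Module} is your assignment $b_X=(\ev_{F(X)}\otimes\id_V)\circ(\id_{F(X)^{\vee}}\otimes\rho^V_X)$, formula~\eqref{eq:iso-braiding} is your inverse assignment, and the paper even notes your trick of precomposing both sides of the associativity axiom with $i^{(2)}_{(X,Y)}$ to strip the universal dinatural maps. Your additional verification of monoidality, matching $(\Psi^F,\alpha^F)$ from \eqref{eq:def_comultiplication} and \eqref{eq:alpha} against the tensor product of half-braidings and the trivial half-braiding on $I$, is also correct and is what the cited references do; the paper leaves that part entirely to them.

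There is, however, one genuine gap. Definition~\ref{def:half-br} requires a half-braiding to be a natural \emph{isomorphism}, but your duality dictionary only produces, from a module structure $\beta_V$, a natural transformation $\rho^V_X=(\id_{F(X)}\otimes b_X)\circ(\coev_{F(X)}\otimes\id_{V\otimes F(X)})$ satisfying naturality, normalization and the hexagon: the $\Hom$-set bijection you invoke matches arbitrary morphisms, not isomorphisms, so invertibility of $\rho^V_X$ is not delivered by it and must be proved separately. Without this step your functor $\Phi$ is not even well defined as a functor into $\mathcal{Z}(F)$, and the claimed object-level bijection between module structures and half-braidings is not established. The fix is standard in the rigid setting: using the hexagon for the pair $(X,{}^{\vee}X)$, the identification $F({}^{\vee}X)\cong{}^{\vee}F(X)$, and the snake identities, one writes down a two-sided inverse explicitly; this is exactly the formula the paper displays immediately after~\eqref{eq:iso-braiding}, which builds $\rho_X^{-1}$ from $\widetilde\coev_{FX}$, the half-braiding evaluated at the left dual, and $\widetilde\ev_{FX}$. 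With that one addition your argument is complete and coincides with the proof in the references the paper cites.
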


We summarize the construction of the isomorphism from Proposition~\ref{theorem:equivalencecentralizer}, given in~\cite{BV2} in the case $F=\id$. 
Given a pair
 $(M,\rho)\in \mathcal{Z}(F)$ 
 with $M\in\Dat$ and a half-braiding $\rho_X\colon M\otimes F(X)\to F(X)\otimes M$. Then, the following diagram
\begin{equation}\label{equation:Module}
\begin{tikzpicture}
\matrix (m) [matrix of math nodes,row sep=3em,column sep=15em,minimum width=2em]
{
	FX^{\vee}\otimes M\otimes FX& FX^{\vee}\otimes FX\otimes M \\
	Z_F(M) & M \\
};
\path[-stealth]

(m-1-1)  edge node [above] {\small $\id_{FX^{\vee}}\otimes \rho_X$} (m-1-2)
(m-1-2) edge node [right] {\small $\ev_{FX}\otimes \id_M$} (m-2-2)
(m-1-1) edge node [left] {\small $i_X(M)$} (m-2-1)
(m-2-1) edge [dashed] node [below] {\small $!\exists \beta$} (m-2-2)
;
\end{tikzpicture}
\end{equation}
defines a unique morphism $\beta\colon Z_F(M)\to M$ due to universality of the coend $Z_F(M)$.
It is straightforward to prove that $(M,\beta)$ is in $Z_F\mathrm{-mod}$. In particular, to check~\eqref{eq:Tmod}, which is $\beta\circ Z_F(\beta)= \beta\circ \mu^F_M$, it is enough to precompose both sides by $i^{(2)}_{(X,Y)}(M)$ and apply definitions of structural maps of $Z_F$.

 On the other hand, given a $Z_F$-module structure $\beta\colon Z_F(M)\to M$, it can be shown
 that the following defines a half-braiding on~$M$:
\begin{equation}\label{eq:iso-braiding}
\rho_X\colon
M\otimes FX\xrightarrow{\mathrm{coev}_{FX}\otimes\id}FX\otimes (FX)^{\vee} \otimes M\otimes FX\xrightarrow{\id\otimes i_X(M)}
FX\otimes Z_F(M)\xrightarrow{\id\otimes\beta}FX\otimes M.
\end{equation}
We note that the inverse to this half-braiding is
\begin{align*}
\rho_X^{-1} = 
 FX\otimes M\xrightarrow{\id \otimes{\widetilde\coev}_{FX}} FX \otimes M \otimes {^{\vee}\!(FX)}\otimes FX
 & \xrightarrow{\id\otimes\rho_{{^{\vee}\!(FX)}}\otimes\id}  FX \otimes {^{\vee}\!(FX)}\otimes M \otimes FX\\
&\xrightarrow{{\widetilde\ev}_{FX}\otimes\id} M \otimes FX.
\end{align*}

\medskip

As described in Section~\ref{ssec:comonads}, $Z_F$ can be obtained from an adjunction consisting of the forgetful functor $\U_F\colon Z_F\mathrm{-mod}\to\Dat$ and the free functor $\F_F\colon \Dat\to Z_F\mathrm{-mod}$ such that 
\begin{equation}
Z_F=\U_F\circ \F_F.
\end{equation}
 The associated comonad $G_{Z_F}$ on $Z_F\mathrm{-mod}$ as defined in \eqref{eq:G_T} will be denoted for brevity by
\begin{equation}\label{eq:centralcomonad}
G_{F}:=\F_F\circ \U_F.
\end{equation}

This allows us to formulate the following theorem: Davydov-Yetter cohomology of a tensor functor $F$ can be reformulated as the cohomology of the comonad $G_F$, provided that the comonad $G_F$ exists. In particular, this is the case for finite tensor categories and exact functors between them.

\begin{Theorem}\label{theorem:Main} Let $\Cat$ and $\Dat$ be tensor categories and $F\colon\Cat\to\Dat$
  a tensor functor  such that the functor $Z_F$ exists. Furthermore, let $\X=(X,\rho^X),\Y=(Y,\rho^Y) \in \mathcal{Z}(F)$. 
  Then, the Davydov-Yetter complex $C^{\bullet}_{DY}(F,\X,\Y)$ from Definition~\ref{definition:DY} is isomorphic to the comonad complex $C^{\bullet}(\X,\Hom_{Z_F\mathrm{-mod}}(?,\Y))_{G_F}$ from Definition~\ref{definition:BarResolution}, for the comonad $G=G_F$ as defined in \eqref{eq:centralcomonad} and where $\X$ and $\Y$ are identified with the corresponding objects in $Z_F\mathrm{-mod}$ as in~\eqref{equation:Module}.
\end{Theorem}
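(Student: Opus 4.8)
The plan is to prove the claimed isomorphism of complexes in two stages: first rewrite the comonad complex in an explicit ``nonhomogeneous'' form, and then identify that form with the Davydov-Yetter complex using the universal property of the coend defining $Z_F$ together with rigidity. The point is that Proposition~\ref{proposition:MonadComonad} already does the bulk of the abstract work; what remains is a concrete comparison of differentials via duality.

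First I would invoke Proposition~\ref{proposition:MonadComonad} for the monad $T=Z_F$ on $\Dat$ and the two $Z_F$-modules $\X=(X,\beta_X)$, $\Y=(Y,\beta_Y)$, where the module structures $\beta_X,\beta_Y$ are those associated to the half-braidings $\rho^X,\rho^Y$ through diagram~\eqref{equation:Module} under the equivalence of Proposition~\ref{theorem:equivalencecentralizer}. This identifies $C^{\bullet}(\X,\Hom_{Z_F\mathrm{-mod}}(?,\Y))_{G_F}$ with the complex whose cochain groups are $\Hom_{\Dat}(Z_F^n(X),Y)$ for $n\geq 0$ and whose differential $\partial$ is given by~\eqref{eq:differentialT} with $T=Z_F$.

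Second I would build the cochain-level isomorphism. Iterating the defining coend and using the Fubini theorem (exactly as for $Z_F^2$ in the text) gives $Z_F^n(X)=\int^{(X_1,\dots,X_n)}F(X_n)^{\vee}\otimes\dots\otimes F(X_1)^{\vee}\otimes X\otimes F(X_1)\otimes\dots\otimes F(X_n)$. By the universal property of this coend, $\Hom_{\Dat}(Z_F^n(X),Y)$ is the end $\int_{(X_1,\dots,X_n)}\Hom_{\Dat}(F(X_n)^{\vee}\otimes\dots\otimes F(X_1)^{\vee}\otimes X\otimes F(X_1)\otimes\dots\otimes F(X_n),Y)$. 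I would then apply the rigidity adjunction $\Hom(A^{\vee}\otimes B,C)\cong\Hom(B,A\otimes C)$ once for each dual factor $F(X_j)^{\vee}$, moving them one by one from the source to the target; the end becomes $\int_{(X_1,\dots,X_n)}\Hom_{\Dat}(X\otimes F(X_1)\otimes\dots\otimes F(X_n),F(X_1)\otimes\dots\otimes F(X_n)\otimes Y)$, which is by definition $\Nat(X\otimes(\otimes^n\circ F^{\times n}),(\otimes^n\circ F^{\times n})\otimes Y)=C^n_{DY}(F,\X,\Y)$. Since the duality adjunction is natural in all variables, the end (dinaturality) condition is carried to the naturality condition, so this is genuinely an isomorphism $\Phi_n\colon\Hom_{\Dat}(Z_F^n(X),Y)\xrightarrow{\cong}C^n_{DY}(F,\X,\Y)$; concretely $\Phi_n$ precomposes a morphism with the universal dinatural transformation of the $n$-fold coend and folds the dual legs with (co)evaluations.

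Third I would check that $\Phi_{\bullet}$ intertwines $\partial$ with the Davydov-Yetter differential~\eqref{equation:DYdifferential}, matching the three families of terms. The term $f\circ Z_F^n(\beta_X)$, in which $\beta_X$ absorbs the innermost coend variable, should reproduce the first summand of~\eqref{equation:DYdifferential}: unwinding~\eqref{equation:Module} introduces $\rho^X$ on the new variable together with an $\ev$ that cancels against the matching dual leg during folding. Dually, $\beta_Y\circ Z_F(f)$ acts on the outermost variable and, via~\eqref{eq:iso-braiding}, yields the last summand with $\rho^Y$. Each middle term $f\circ Z_F^{n-i}(\mu_{Z_F^{i-1}(X)})$ uses the defining property~\eqref{eq:muF-def} of the multiplication, which merges two adjacent coend variables into their tensor product; combined with the monoidal constraint $\psi$ of $F$ (identifying $F(X_{i-1})\otimes F(X_i)$ with $F(X_{i-1}\otimes X_i)$) this produces the summand $f_{\dots,X_{i-1}\otimes X_i,\dots}$, and the signs $(-1)^i$ agree on both sides, with the case $n=0$ checked directly. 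The hard part will be precisely this bookkeeping: aligning the ordering of the $n+1$ coend variables with the Davydov-Yetter indices $X_0,\dots,X_n$, and tracking every (co)evaluation through the folding isomorphism so that the cancellations occur exactly as dictated by the half-braiding relation~\eqref{eq:halfbraiding} and the module axioms. All the conceptual input is the universal property of the coend, rigidity, and Proposition~\ref{proposition:MonadComonad}; the genuine work is this naturality-driven, term-by-term verification with correct signs.
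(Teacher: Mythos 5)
Your proposal is correct and takes essentially the same route as the paper's own proof: the paper likewise reduces the comonad complex to the nonhomogeneous complex with cochains $\Hom_{\Dat}\bigl(Z_F^n(X),Y\bigr)$ via Proposition~\ref{proposition:MonadComonad}, identifies these groups with the Davydov-Yetter cochains by combining the rigidity/folding isomorphism (Lemma~\ref{lemma:DinatDY}) with the universal property of the iterated coend (Lemma~\ref{lemma:DYMonad}), and then matches the $\beta_X$-term, the $\mu$-terms, and the $\beta_Y$-term of the differential exactly as you describe. The only difference is organizational: the paper traverses the same chain of isomorphisms starting from the DY side rather than from the comonad side.
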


We provide a proof below but first we note that the isomorphism of complexes in Theorem~\ref{theorem:Main} is a powerful tool for the computation of Davydov-Yetter cohomology as will be demonstrated in Section~\ref{ssec:Ocneanu} (Ocneanu rigidity) and in Section~\ref{ssec:example} in a class of examples of non-semisimple Hopf algebras. 
 A further advantage is that we obtain the following immediate corollary from the fact that comonad cohomology is functorial in its coefficients (recall the discussion after Definition~\ref{definition:BarResolution}).
\begin{Cor}\label{cor:functorial}
	Given a tensor functor $F\colon\Cat\to\Dat$ such that the functor $Z_F$ exists, then Davydov-Yetter cohomology defines a functor 
	$$
	H^n_{DY}(F,?,!)\colon\;\mathcal{Z}(F)^{op}\times \mathcal{Z}(F)\to \mathrm{Vec}_k, \qquad \text{for all} \; n\geq0.
	$$
\end{Cor}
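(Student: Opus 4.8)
The plan is to deduce the statement from the cochain isomorphism of Theorem~\ref{theorem:Main} together with the functoriality of comonad cohomology in both of its arguments, which was noted right after Definition~\ref{definition:BarResolution}. By Theorem~\ref{theorem:Main}, for every pair $\X=(X,\rho^X)$ and $\Y=(Y,\rho^Y)$ in $\mathcal{Z}(F)$ there is an isomorphism of cochain complexes
\begin{equation*}
C^{\bullet}_{DY}(F,\X,\Y)\;\cong\;C^{\bullet}\bigl(\X,\Hom_{Z_F\mathrm{-mod}}(?,\Y)\bigr)_{G_F},
\end{equation*}
where $\X$ and $\Y$ are identified with the corresponding $Z_F$-modules via Proposition~\ref{theorem:equivalencecentralizer}. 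It therefore suffices to endow the right-hand side with a bifunctorial structure and to transport it through this isomorphism.

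First I would treat the dependence on the source coefficient. A morphism $\phi\colon\X\to\X'$ in $\mathcal{Z}(F)\cong Z_F\mathrm{-mod}$ yields, in each degree $n$, the morphism $G_F^{n+1}(\phi)\colon G_F^{n+1}(\X)\to G_F^{n+1}(\X')$. Since the bar differential~\eqref{eq:barresolutiondelta} is assembled from the natural transformation $\varepsilon$ and the endofunctor $G_F$, it is itself natural; hence $\bigl(G_F^{n+1}(\phi)\bigr)_{n\geq 0}$ is a chain map of bar resolutions. Applying the contravariant coefficient functor $\Hom_{Z_F\mathrm{-mod}}(?,\Y)$ converts this into a chain map
\begin{equation*}
\Hom_{Z_F\mathrm{-mod}}\bigl(G_F^{\bullet+1}(\X'),\Y\bigr)\longrightarrow\Hom_{Z_F\mathrm{-mod}}\bigl(G_F^{\bullet+1}(\X),\Y\bigr),\qquad g\longmapsto g\circ G_F^{n+1}(\phi),
\end{equation*}
which is exactly the variable-$X$ functoriality of comonad cohomology. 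Because $\Hom(?,\Y)$ is contravariant, this dependence is contravariant in $\X$, matching the factor $\mathcal{Z}(F)^{op}$.

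Next I would treat the target coefficient. A morphism $\psi\colon\Y\to\Y'$ in $Z_F\mathrm{-mod}$ induces by post-composition a natural transformation $\psi_{*}\colon\Hom_{Z_F\mathrm{-mod}}(?,\Y)\to\Hom_{Z_F\mathrm{-mod}}(?,\Y')$ of coefficient functors. As the comonad differential acts only on the first argument of $\Hom$, the map $\psi_{*}$ commutes with it and is again a chain map; this is the variable-$E$ functoriality of comonad cohomology, and it is covariant in $\Y$. Passing to cohomology and invoking that $H^n(\X,E)_{G_F}$ is functorial in $\X$ and in $E$ then gives the two variable-wise functors. Their compatibility, namely that the two actions commute and that identities and composites are preserved, is immediate from associativity of composition in $Z_F\mathrm{-mod}$: the $\X$-action is precomposition with $G_F^{n+1}(\phi)$ and the $\Y$-action is post-composition with $\psi$, and one has $G_F^{n+1}(\id)=\id$ and $G_F^{n+1}(\phi'\circ\phi)=G_F^{n+1}(\phi')\circ G_F^{n+1}(\phi)$ since $G_F$ is a functor. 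This makes $(\X,\Y)\mapsto H^n\bigl(\X,\Hom_{Z_F\mathrm{-mod}}(?,\Y)\bigr)_{G_F}$ a functor $\mathcal{Z}(F)^{op}\times\mathcal{Z}(F)\to\mathrm{Vec}_k$.

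Finally I would transport this structure to $H^n_{DY}(F,?,!)$ by checking that the isomorphism of Theorem~\ref{theorem:Main} is natural in both $\X$ and $\Y$. On the Davydov-Yetter side the induced actions have a transparent description: a morphism $\phi$ acts on a cochain $f$ by precomposition with $\phi\otimes\id$ on the source, while $\psi$ acts by post-composition with $\id\otimes\psi$ on the target, and in both cases the centralizer-morphism condition~\eqref{eq:diag-centrF-morphism} is exactly what guarantees compatibility with the differential~\eqref{equation:DYdifferential}, whose outer terms involve $\rho^X$ and $\rho^Y$. The main point requiring care is precisely this naturality verification: one must check that the explicit cochain isomorphism intertwines precomposition with $G_F^{n+1}(\phi)$ on the comonad side with precomposition by $\phi\otimes\id$ on the Davydov-Yetter side, and likewise post-composition with $\psi$ on both sides. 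Once this is confirmed, the bifunctorial structure descends verbatim, yielding the desired functor $H^n_{DY}(F,?,!)\colon\mathcal{Z}(F)^{op}\times\mathcal{Z}(F)\to\mathrm{Vec}_k$ for all $n\geq 0$.
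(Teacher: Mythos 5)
Your proposal is correct and follows essentially the same route as the paper: the corollary is obtained by combining the cochain isomorphism of Theorem~\ref{theorem:Main} with the functoriality of comonad cohomology in the object variable (via naturality of the bar differential) and in the coefficient functor, which is exactly the argument the paper invokes by pointing to the discussion after Definition~\ref{definition:BarResolution}. The only deviation is your final step: the naturality check for the isomorphism of Theorem~\ref{theorem:Main} is not actually required, since the corollary merely asserts that $H^n_{DY}(F,?,!)$ \emph{is} a functor, and one may simply transport the bifunctor structure through the fixed isomorphisms, the functor axioms then holding automatically under conjugation.
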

 This corollary  can be used to compare cohomologies for different coefficients by using morphisms between them.

\subsection{Proof of Theorem \ref{theorem:Main}}
The proof consists of a sequence of lemmas. We need first to relate Davydov-Yetter cohomology to 
the complex from Proposition~\ref{proposition:MonadComonad}
associated to the central monad $Z_F$. 
This is guided by the following sketch presented for $F$ the identity functor and trivial coefficients:
\begin{align}
\Nat(\otimes^n, \otimes^n)&\cong \int_{X_1,...,X_n} \Hom_{\Cat}(X_1\otimes...\otimes X_n,X_1\otimes...\otimes X_n)\label{eq:31}\\
&\cong\int_{X_1,...,X_n} \Hom_{\Cat}(X_n^{\vee}\otimes...\otimes X_1^{\vee}\otimes X_1\otimes...\otimes X_n,I)\label{eq:32}\\
&\cong\Hom_{\Cat}\left(\int^{X_1,...,X_n}X_n^{\vee}\otimes...\otimes X_1^{\vee}\otimes X_1\otimes...\otimes X_n,I\right)\label{eq:33}\\
&\cong\Hom_{\Cat}(Z^n(I),I),\label{eq:34}
\end{align}
for $n>0$, while $n=0$ case is trivial: the space of natural endotransformations of the functor $\otimes^{0}\colon k \mapsto I_{\Cat}$ is isomorphic to $\mathrm{End}(I_\Cat)$.
The isomorphism $\eqref{eq:31}$ is a special case of the well known fact that 
$$
\Nat(F,G)=\int_{X}\Hom(F(X),G(X))
$$
 (compare e.g.~\cite[Chap.~IX.5]{M}). We note  that~\eqref{eq:32} follows from the definition of right duals and \eqref{eq:33} follows from the fact that the $\Hom$-functor preserves limits. We thus get an isomorphism~\eqref{eq:34} between the cochain groups  from Theorem \ref{theorem:Main} for $F=\id$ and trivial coefficients.
 To show that this isomorphism is also an isomorphism of cochain complexes (for general $F$ and coefficients) is the main body of technical work in this section.
 
\begin{proof}[Proof of Theorem~\ref{theorem:Main}]
	We begin with a lemma which  is a reformulation of Davydov-Yetter cohomology similar to the composition of isomorphisms \eqref{eq:31} \& \eqref{eq:32}.
\begin{Lemma}\label{lemma:DinatDY}
Let $F\colon\Cat\to\Dat$ be a tensor functor between finite tensor categories for which the functor $Z_F$ is well-defined. Moreover, let  $(X,\rho^X), (Y,\rho^Y)\in\mathcal{Z}(F)$. Then, the Davydov-Yetter complex $F\colon\Cat\to\Dat$ with coefficients $(X,\rho_X)$ and $(Y,\rho_Y)$ is isomorphic to the following complex: the cochain groups are
\begin{equation}\label{eq:Dinat-cochain}
\mathrm{Dinat}\left((?^{\vee} \circ\otimes^n\circ F^{\times n})\otimes X\otimes (\otimes^n\circ F^{\times n}),Y\right)\ .
\end{equation}
For a dinatural transformation $\gamma$ from~\eqref{eq:Dinat-cochain},
	\begin{equation}
	\gamma_{X_1,...,X_n}\colon \;F(X_n)^{\vee}\otimes...\otimes F(X_1)^{\vee}\otimes X\otimes F(X_1)\otimes...\otimes F(X_n)\to Y\ ,
	\end{equation}
the differential is
{\small
	\begin{align}\label{eq:lemmadifferential}
	\tilde{\delta}^n(\gamma)_{X_0,...,X_n}&:=\gamma_{X_1,...,X_n}\circ \left(\id_{FX_n^{\vee}\otimes...\otimes FX_1^{\vee}}\otimes \ev_{FX_0}\otimes \id_{X\otimes FX_1\otimes...\otimes FX_n}\right)\circ\nonumber\\
	&\mbox{}\qquad\circ \left(\id_{FX_n^{\vee}\otimes...\otimes FX_0^{\vee}}\otimes \rho^X_{X_0}\otimes\id_{FX_1\otimes...\otimes FX_n}\right)\nonumber\\
	&+\sum_{i=1}^n(-1)^i\gamma_{X_0,...,X_{i-1}\otimes X_i,...,X_n}\nonumber\\
	&+(-1)^{n+1}\left(\ev_{FX_n}\otimes \id_Y\right)\circ\left(\id_{FX_n^{\vee}}\otimes\rho^Y_{X_n}\right)\circ
\left(\id_{FX_n^{\vee}}\otimes\gamma_{X_0,..,X_{n-1}}\otimes\id_{FX_n}\right)
	\end{align}
}
	\end{Lemma}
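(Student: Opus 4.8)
The plan is to construct the isomorphism levelwise out of two standard ingredients --- the description of a natural transformation as an end, and the duality adjunction of the rigid category $\Dat$ --- and then to check that it intertwines the differentials $\delta^n$ and $\tilde\delta^n$.

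First I would fix $n\ge 1$ and write $W:=F(X_1)\otimes\cdots\otimes F(X_n)$, so that its right dual is $W^\vee=F(X_n)^\vee\otimes\cdots\otimes F(X_1)^\vee$, with evaluation $\ev_W\colon W^\vee\otimes W\to I$ and coevaluation $\coev_W\colon I\to W\otimes W^\vee$ obtained by nesting the maps $\ev_{F(X_i)}$ and $\coev_{F(X_i)}$. By the end formula $\Nat(G,H)=\int_{\vec X}\Hom_\Dat(G\vec X,H\vec X)$ used in \eqref{eq:31}, a cochain $f\in C^n_{DY}(F,\X,\Y)$ is the same datum as an element of the end $\int_{\vec X\in\Cat^{\times n}}\Hom_\Dat(X\otimes W,\,W\otimes Y)$. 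For each fixed $\vec X$ the duality adjunction provides an isomorphism
\begin{equation*}
\Phi_{\vec X}\colon \Hom_\Dat(X\otimes W,\,W\otimes Y)\xrightarrow{\ \cong\ }\Hom_\Dat(W^\vee\otimes X\otimes W,\,Y),\qquad
\Phi_{\vec X}(f):=(\ev_W\otimes\id_Y)\circ(\id_{W^\vee}\otimes f),
\end{equation*}
whose inverse sends $\gamma$ to $(\id_W\otimes\gamma)\circ(\coev_W\otimes\id_{X\otimes W})$; that these are mutually inverse is the snake identity for $W$. For $n=0$ we set $W=I$ and $\Phi$ is the identity on $\Hom_\Dat(X,Y)$.

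Second I would promote $\Phi_{\vec X}$ to an isomorphism of cochain groups. The point is that dualizing the target factor $W$ trades its covariant dependence on $\vec X$ for a contravariant dependence carried by $W^\vee$, now placed in the source. A direct diagram chase shows that $f$ is natural in each $X_i$ if and only if $\gamma=\Phi(f)$ is dinatural in each $X_i$: the naturality square of $f$ along a morphism $g\colon X_i\to X_i'$ becomes, after composing with $\ev_W$ and using that $F(g)^\vee$ is the dual of $F(g)$, precisely the dinaturality hexagon for $\gamma$. Equivalently, since $\Hom_\Dat(-,Y)$ sends colimits to limits as in \eqref{eq:33}, $\Phi$ identifies the end computing $C^n_{DY}$ with $\mathrm{Dinat}\bigl((?^\vee\circ\otimes^n\circ F^{\times n})\otimes X\otimes(\otimes^n\circ F^{\times n}),Y\bigr)$, the claimed cochain group.

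It remains to verify that $\Phi$ is a cochain map, i.e.\ that $\Phi\circ\delta^n=\tilde\delta^n\circ\Phi$; this is where the real work lies. I would transport the three types of terms of \eqref{equation:DYdifferential} separately. The middle terms $\sum_{i=1}^n(-1)^i f_{\ldots,X_{i-1}\otimes X_i,\ldots}$ commute with $\Phi$ essentially on the nose, since pre- and post-composition with $\ev_W$, $\coev_W$ only reindex the dualized legs, and they reproduce the middle sum of \eqref{eq:lemmadifferential}. The first term of $\delta^n$, built from $\rho^X_{X_0}$, must be matched with the first term of $\tilde\delta^n$, which carries the extra $\ev_{F(X_0)}$ together with $\rho^X_{X_0}$; similarly the last term, built from $\rho^Y_{X_n}$, must reproduce the $\ev_{F(X_n)}$ term of $\tilde\delta^n$. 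I expect the main obstacle to be exactly these two boundary terms: one has to slide the new evaluation and coevaluation of $W$ past the half-braidings $\rho^X$ and $\rho^Y$ and collapse the resulting zig-zags using the snake identities, while keeping track of the reversed order of the duals in $W^\vee$. This is most cleanly done as a string-diagram computation, and since the half-braidings enter only through the outermost legs $F(X_0)$ and $F(X_n)$, the manipulations are local and the signs $(-1)^{n+1}$ match automatically.
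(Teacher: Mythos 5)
Your proposal is correct and follows essentially the same route as the paper: your map $\Phi_{\vec X}(f)=(\ev_W\otimes\id_Y)\circ(\id_{W^\vee}\otimes f)$ is exactly the paper's isomorphism $\Psi$ (defined there graphically by closing the output legs $F(X_i)$ with nested evaluations, with inverse given by coevaluations), and your term-by-term transport of the differential --- middle terms matching on the nose, boundary terms handled by collapsing the resulting zig-zags via snake identities around $\rho^X_{X_0}$ and $\rho^Y_{X_n}$ --- is precisely the computation the paper carries out (explicitly for the $\delta^n_0$ term, with the rest declared analogous). Your explicit remark that naturality of $f$ corresponds to dinaturality of $\Phi(f)$ via dinaturality of $\ev$ is a welcome clarification of a point the paper leaves implicit, but it is not a different method.
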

\begin{Remark}\label{rem:tilde-delta}
	Similar to Remark~\ref{remark:graphicalDY}, we can express the above differential graphically:
{\footnotesize
	\begin{multline}
	\tilde{\delta}^n(\gamma)_{X_0,...,X_n}=\;\;\;\;\ipic{coupon-dinat4}{0.3}\put(-75,-22){\tiny $\rho^X_{X_0}$}\put(-162,-45){\tiny $F(X_n)^{\vee}$}\put(-135,-45){\tiny $F(X_1)^{\vee}$}\put(-106,-45){\tiny $F(X_0)^{\vee}$}\put(-80,-45){\tiny $X$}\put(-68,-45){\tiny $F(X_0)$}\put(-46,-45){\tiny $F(X_1)$}\put(-18,-45){\tiny $F(X_n)$}\put(-83,10){\scriptsize $\gamma$}\put(-82,38){\tiny $Y$}\put(-140,-20){\tiny $\ldots$}\put(-30,-20){\tiny $\ldots$}\nonumber\\
	+ \; \sum_{i=1}^{n}(-1)^i\;\;\ipic{coupon-varDY1}{0.3}\put(-175,-35){\tiny $F(X_n)^{\vee}$}\put(-145,-35){\tiny $F(X_{i-1}\otimes X_i)^{\vee}$}\put(-88,-35){\tiny $X$}\put(-68,-35){\tiny $F(X_{i-1}\otimes X_i)$}\put(-12,-35){\tiny $F(X_{n})$}\put(-88,28){\tiny $Y$}\put(-88,2){\scriptsize $\gamma$}\put(-145,-15){\tiny $\ldots$}\put(-110,-15){\tiny $\ldots$}\put(-70,-15){\tiny $\ldots$}\put(-30,-15){\tiny $\ldots$} 
	\;+\;(-1)^{n+1}\; \ipic{coupon-varDY2}{0.3}\put(-155,-60){\tiny $F(X_n)^{\vee}$}\put(-123,-60){\tiny $F(X_{n-1})^{\vee}$}\put(-75,-60){\tiny $X$}\put(-15,-60){\tiny $F(X_{n})$}\put(-50,-60){\tiny $F(X_{n-1})$}\put(-75,-33){\scriptsize $\gamma$}\put(-20,-11.5){\tiny $\rho^Y_{X_n}$}\put(-10,22){\tiny $Y$}\put(-95,-50){\tiny $\ldots$}\put(-55,-50){\tiny $\ldots$}
	\end{multline}
	}
\mbox{}\!\!\!where we omit indices in $\gamma$ for brevity. We also note that for $n=0$ the cochain spaces are $\Hom_{\Dat}(X,Y)$ and in the differential $\tilde{\delta}^0$ above only first and last terms are present, and the coupon with $\gamma$ corresponds to a morphism from $X$ to $Y$, i.e.\ the sources $F(X_i)$ and $F(X_i)^{\vee}$, for $i\ne 0$, should be omitted in the picture of the differential. 
\end{Remark}
\begin{proof}[Proof of Lemma \ref{lemma:DinatDY}]
We first state the isomorphism of the cochain spaces. 
Using the graphical conventions introduced above,  
the isomorphism on the components of a natural transformation
 $f\in \Nat\left(X\otimes\left(\otimes^n \circ F^{\times n}\right),\left(\otimes^n\circ F^{\times n}\right)\otimes Y\right)$ is the following canonical map:
\be
\put(-30,-3){$\Psi:$}
\put(20,22){\tiny $F(X_1)$}   \put(45,15){\tiny $\ldots$} \put(55,22){\tiny $F(X_n)$}   \put(79,22){\tiny $Y$}
\put(45,-2){\scriptsize $f$}
\put(10,-25){\tiny $X$} \put(20,-25){\tiny $F(X_1)$}   \put(45,-15){\tiny $\ldots$} \put(55,-25){\tiny $F(X_n)$}  
\ipic{coupon}{.25}   \qquad  
\longmapsto
\qquad
\ipic{coupon-ev}{.23}
\put(-50,-5){\tiny $\ldots$} \put(-16,25){\tiny $Y$}
\put(-50,-23){\scriptsize $f$}
\put(-115,-5){\tiny $\ldots$}
\put(-136,-45){\tiny $F(X_n)^{\!\vee}$}  \put(-106,-45){\tiny $F(X_1)^{\!\vee}$}   \put(-79,-45){\tiny $X$}   \put(-67,-45){\tiny $F(X_1)$}   \put(-50,-35){\tiny $\ldots$}   \put(-37,-45){\tiny $F(X_n)$}
\ee
where the dots indicate the evaluation on $\mathrm{ev}_{F(X_k)}\colon F(X_k)^{\vee}\otimes F(X_k)\to I$ for $2\leq k\leq n-1$. The inverse map $\Psi^{-1}$ is defined similarly using the coevaluation maps.

Using these maps, one can easily transport the differential via $\tilde{\delta}=\Psi\circ\delta\circ \Psi^{-1}$ and obtain~\eqref{eq:lemmadifferential}.
 We write $\delta^n=\sum_{i=0}^{n+1}(-1)^i\delta^n_i$ and show this for  $\delta^n_0$.
 The transported differential is on components
\begin{equation}
\ipic{coupon-dinat}{0.25}\put(-115,-35){\tiny $F(X_n)^{\vee}$}\put(-85,-35){\tiny $F(X_1)^{\vee}$}\put(-57,-35){\tiny $X$}\put(-45,-35){\tiny $F(X_1)$} \put(-18,-35){\tiny $F(X_n)$}\put(-57,27){\tiny $Y$}\put(-57,-2.5){\scriptsize $\gamma$}\put(-90,-20){\tiny $\ldots$}\put(-25,-20){\tiny $\ldots$}
\qquad \stackrel{\Psi^{-1}}{\longmapsto} \qquad
 \ipic{coupon-dinat2}{0.25}\put(-150,44){\tiny $F(X_1)$} \put(-125,44){\tiny $F(X_n)$}\put(-54,44){\tiny $Y$}\put(-56,-27){\tiny $X$}\put(-45,-27){\tiny $F(X_1)$}\put(-17,-27){\tiny $F(X_n)$}\put(-53,20){\scriptsize $\gamma$}\put(-134,6){\tiny $\ldots$}\put(-83,6){\tiny $\ldots$}\put(-30,6){\tiny $\ldots$}\qquad \stackrel{\delta^n_0}{\longmapsto}\qquad
\end{equation}

\medskip

\begin{equation}
\ipic{coupon-dinat3}{0.3}\put(-197,58){\tiny $F(X_0)$}\put(-175,58){\tiny $F(X_1)$}\put(-153,58){\tiny $F(X_n)$}\put(-80,58){\tiny $Y$}\put(-84,-25){\tiny $X$}\put(-73,-25){\tiny $F(X_0)$}\put(-49,-25){\tiny $F(X_n)$}\put(-18,-25){\tiny $F(X_1)$}\put(-80,30){\scriptsize $\gamma$}\put(-75,-2.5){\tiny $\rho^X_{X_0}$}\put(-161,12){\tiny $\ldots$}\put(-115,12){\tiny $\ldots$}\put(-30,12){\tiny $\ldots$}
\qquad \stackrel{\Psi}{\longmapsto}\qquad
 \ipic{coupon-dinat4}{0.3}\put(-75,-22){\tiny $\rho^X_{X_0}$}\put(-162,-45){\tiny $F(X_n)^{\vee}$}\put(-135,-45){\tiny $F(X_1)^{\vee}$}\put(-106,-45){\tiny $F(X_0)^{\vee}$}\put(-80,-45){\tiny $X$}\put(-68,-45){\tiny $F(X_0)$}\put(-46,-45){\tiny $F(X_1)$}\put(-18,-45){\tiny $F(X_n)$}\put(-83,10){\scriptsize $\gamma$}\put(-80,40){\tiny $Y$}\put(-140,-20){\tiny $\ldots$}\put(-30,-20){\tiny $\ldots$}
\end{equation}

\medskip
The other summands can be computed similarly.
\end{proof}
We can now construct a canonical isomorphism between the complex from Lemma \ref{lemma:DinatDY} and the spaces $\Hom_{\Dat}\left(Z_F^n(X),Y\right)$, which corresponds to isomorphism \eqref{eq:33} in the outline.
\begin{Lemma}\label{lemma:DYMonad}
The complex presented in Lemma~\ref{lemma:DinatDY} is isomorphic to the complex with cochain vector spaces $\Hom_{\Dat}\bigl(Z_F^n(X),Y\bigr)$ and the differential 
\begin{align}\label{eq:Zcpx}
\partial^n(f)&:=f\circ Z_F^n\left(\beta_X\right)+\sum_{i=1}^n(-1)^i f\circ Z_F^{n-i}
\bigl(\mu^F_{Z_F^{i-1}(X)}\bigr)
+(-1)^{n+1}\beta_Y\circ Z_F(f),
\end{align}
where $\beta_X$ and $\beta_Y$ are defined as in \eqref{equation:Module} corresponding to $\rho^X$ and $\rho^Y$ respectively.
\end{Lemma}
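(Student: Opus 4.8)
The plan is to produce the isomorphism first on the level of cochain groups, directly from the universal property of coends, and then to verify that it intertwines the two differentials term by term. To begin, I would invoke the Fubini theorem for coends (\cite[Prop.~IX.8]{M}), exactly as in the derivation of \eqref{eq:i2}, to present the $n$-fold iterate as a single coend
\[
Z_F^n(X)=\int^{(X_1,\dots,X_n)\in\Cat^{\times n}} F(X_n)^{\vee}\otimes\cdots\otimes F(X_1)^{\vee}\otimes X\otimes F(X_1)\otimes\cdots\otimes F(X_n),
\]
with universal dinatural transformation $i^{(n)}_{(X_1,\dots,X_n)}(X)$ defined by nesting $i^F$ recursively as in \eqref{eq:i2}, so that the innermost slot (adjacent to $X$) is $X_1$ and the outermost is $X_n$. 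Since $\Hom_{\Dat}(-,Y)$ sends colimits to limits and an end of a $\Hom$-functor is precisely a space of dinatural transformations, precomposition with $i^{(n)}$ gives a natural bijection
\[
\Phi^n\colon\ \Hom_{\Dat}\bigl(Z_F^n(X),Y\bigr)\ \xrightarrow{\ \cong\ }\ \mathrm{Dinat}\bigl((?^{\vee}\circ\otimes^n\circ F^{\times n})\otimes X\otimes(\otimes^n\circ F^{\times n}),Y\bigr),\qquad \Phi^n(f)=f\circ i^{(n)}(X).
\]
This is isomorphism \eqref{eq:33} of the outline, now in full generality, and it identifies the two families of cochain groups.

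It then remains to check that $\Phi$ is a cochain map, i.e.\ that $\Phi^{n+1}\bigl(\partial^n(f)\bigr)=\tilde\delta^n\bigl(\Phi^n(f)\bigr)$ for all $f\in\Hom_{\Dat}(Z_F^n(X),Y)$. I would establish this by precomposing $\partial^n(f)$ with $i^{(n+1)}(X)$ and matching the outcome summand by summand with the three groups of terms in \eqref{eq:lemmadifferential}, writing $\gamma=\Phi^n(f)$. Here $Z_F^{n+1}(X)$ is the coend over $\Cat^{\times(n+1)}$ with slots $X_0,\dots,X_n$, the slot $X_0$ being innermost and $X_n$ outermost. The boundary term $f\circ Z_F^n(\beta_X)$ collapses the $X_0$-slot through $\beta_X$; inserting the defining diagram \eqref{equation:Module} of $\beta_X$ reproduces exactly the $\ev_{FX_0}$ and $\rho^X_{X_0}$ in the first term of $\tilde\delta^n$. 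Symmetrically, $\beta_Y\circ Z_F(f)$ collapses the outermost $X_n$-slot, and \eqref{equation:Module} for $\beta_Y$ yields the $\ev_{FX_n}$ and $\rho^Y_{X_n}$ of the last term. For $1\le i\le n$, the middle term $f\circ Z_F^{n-i}\bigl(\mu^F_{Z_F^{i-1}(X)}\bigr)$ fuses the two adjacent slots $X_{i-1}$ (inner) and $X_i$ (outer); by the defining relation \eqref{eq:muF-def} of the monad multiplication together with the Fubini nesting, this is precisely $\gamma_{X_0,\dots,X_{i-1}\otimes X_i,\dots,X_n}$, reproducing the middle terms, with all signs $(-1)^i$ matching by construction.

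The main obstacle I expect is the bookkeeping in the middle (multiplication) terms: one must track how the recursive definition of $i^{(n+1)}(X)$ interacts with \eqref{eq:muF-def}, being careful that the dual objects $F(X_j)^{\vee}$ occur in reverse order and that $\mu^F$ fuses the correct adjacent pair in the correct order $X_{i-1}\otimes X_i$. This is essentially the coend incarnation of the computation behind Proposition~\ref{proposition:MonadComonad}; indeed the differential \eqref{eq:Zcpx} is precisely the nonhomogeneous differential \eqref{eq:differentialT} specialized to $T=Z_F$ and to the $Z_F$-modules $\mathsf{X}=(X,\beta_X)$ and $\mathsf{Y}=(Y,\beta_Y)$. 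Once the cochain-group isomorphism and the three term-matchings are in place, the cochain-map property follows from the universal properties of the coends without further surprises, completing the proof of the lemma.
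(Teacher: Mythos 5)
Your proposal is correct and follows essentially the same route as the paper's own proof: the cochain groups are identified via the universal property of the iterated coend (precomposition with $i^{(n)}$, bijective by Fubini and the fact that $\Hom_{\Dat}(-,Y)$ turns coends into spaces of dinatural transformations), and the differential is then matched summand by summand using the defining diagram \eqref{equation:Module} of $\beta_X$ and $\beta_Y$, the relation \eqref{eq:muF-def} for $\mu^F$, and the functoriality defining $Z_F(f)$ --- exactly the three commuting diagrams the paper draws for $\partial^n_0$, $\partial^n_i$ ($0<i<n+1$) and $\partial^n_{n+1}$. Your closing observation that \eqref{eq:Zcpx} is the nonhomogeneous differential \eqref{eq:differentialT} specialized to $T=Z_F$ is also precisely how the paper concludes the proof of Theorem~\ref{theorem:Main} from this lemma.
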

\begin{proof}
We first define isomorphisms to the cochain groups \eqref{eq:Dinat-cochain}
 of the complex described in Lemma~\ref{lemma:DinatDY}. 
 Recall that $i(X)\colon F(?)^{\vee}\otimes X\otimes F(?)\to Z_F(X)$ denotes the universal dinatural transformations for the coend $Z_F(X)$. 
Let $i^{(n)}(X)$ denotes the universal dinatural transformation for the coend $Z^n_F(X)$, recall~\eqref{eq:i2} for $n=2$.  Given 
$$
\gamma\in\mathrm{Dinat}\left((?^{\vee}\circ\otimes^n\circ  F^{\times n})\otimes X\otimes (\otimes^n\circ F^{\times n}),Y\right),
$$
 we define $\hat{\gamma}\colon Z_F^n(X)\to Y$ as the unique morphism that makes the following diagram commute
\begin{equation}\label{equation:isomorphismZ}
\footnotesize
\begin{tikzpicture}
\matrix (m) [matrix of math nodes,row sep=3.5em,column sep=7em,minimum width=2em]
{
	F(X_n)^{\vee}\otimes...\otimes F(X_1)^{\vee}\otimes X\otimes F(X_1)\otimes...\otimes F(X_n)& Z_F^n(X)\\
 Y&  \\
};
\path[-stealth]

(m-1-1)  edge node [above] {\small $i^{(n)}_{X_1,...,X_n}(X)$} (m-1-2)
(m-1-1) edge node [left] {\small $\gamma_{X_1,...,X_n}$} (m-2-1)
(m-1-2) edge[dashed] node [below right] {\small $!\exists\;\hat{\gamma}$} (m-2-1)
;
\end{tikzpicture}
\end{equation}
The inverse map can be written down explicitly. Given a morphism $f\colon Z_F^n(X)\to Y$, we define the corresponding element $\tilde{f}$ from~\eqref{eq:Dinat-cochain}
component-wise via 
\be\label{eq:tildef}
\tilde{f}_{X_1,...,X_n}:=f\circ i^{(n)}_{X_1,...,X_n}(X).
\ee

 We write the differential in~\eqref{eq:Zcpx} as $\partial^n=\sum_{i=0}^{n+1}(-1)^i\partial^n_i$ and describe how the isomorphism $f\mapsto \tilde{f}$
  transports the corresponding summands of the differential from Lemma~\ref{lemma:DinatDY}. We begin with $\partial^n_0$.
For $n=0$ and $f\in~\Hom_{\Dat}(X,Y)$, we have the equality:
\be\label{eq:tilde-delta-0}
\tilde{\delta}^0_{0}(f) = f \circ (\ev_{FX_0}\otimes \id_X) \circ (\id_{FX_0^{\vee}}\otimes \rho_{X_0}^X) =  f\circ \beta_X \circ i_{X_0}(X),
\ee
where we used~\eqref{equation:Module}, recall also Remark~\ref{rem:tilde-delta}. The right hand side of~\eqref{eq:tilde-delta-0} factors uniquely through the coend $Z_F(X)$ and defines the map  $\partial^0_0= f\circ \beta_X\colon Z_F(X) \to Y$.
We similarly treat the $n>0$ cases. Let now $f\in~\Hom_{\Dat}(Z^n_F(X),Y)$, then the unique $\partial^n_{0}(f)$ is fixed by the following commuting diagram:
\begin{equation*}
\footnotesize
\begin{tikzpicture}
\matrix (m) [matrix of math nodes,row sep=5em,column sep=5.5em,minimum width=2em]
{
	F(X_n)^{\vee}\otimes...\otimes F(X_0)^{\vee}\otimes X\otimes F(X_0)\otimes...\otimes F(X_n)& & Z_F^{n+1}(X)\\
	F(X_n)^{\vee}\otimes...\otimes F(X_0)^{\vee} \otimes F(X_0)\otimes X\otimes...\otimes F(X_n)& Z^n_F(X) &\\
	F(X_n)^{\vee}\otimes...\otimes  X\otimes...\otimes F(X_n)&  &\\
	Y& &  \\
};
\path[-stealth]

(m-1-1)  edge node [left] {$\id\otimes\rho^X_{X_0}\otimes \id$} (m-2-1)
(m-2-1)  edge node [left] {$\id\otimes\ev_{F(X_0)}\otimes \id$} (m-3-1)
(m-3-1)  edge node [left] {$\tilde{f}_{X_1,...,X_n}$} (m-4-1)
(m-1-1)  edge node [above] {$i^{(n+1)}_{X_0,...,X_n}(X)$} (m-1-3)
(m-3-1)  edge node [above,sloped] {$i^{(n)}_{X_1,...,X_n}(X)$} (m-2-2)
(m-1-3)  edge node [above, sloped] {$Z_F^n\left(\beta_X\right)$} (m-2-2)
(m-2-2)  edge node [sloped, above left] {$f$} (m-4-1)
(m-1-3)  edge[bend left, dashed] node [below right] {$\partial^n_{0}(f)$} (m-4-1)

;
\end{tikzpicture}
\end{equation*}
The vertical composition is just $\tilde{\delta}^n_{0}(\tilde{f})$.
 The above diagram consists of an upper pentagon and a lower left triangle. The upper pentagon is simply the definition 
  of $Z^n_F(\beta_X)$,
  recall~\eqref{equation:Module},
   while the lower left triangle is the definition of $\tilde{f}$ in terms of $f$,
see~\eqref{eq:tildef}.
 Since both diagrams commute, the entire diagram commutes too. 
Comparing this diagram with the diagram in~\eqref{equation:isomorphismZ}, 
where $\gamma$ is the vertical composition $\tilde{\delta}^n_0(\tilde{f})$, it fixes $\partial_0^n(f)$ uniquely as the first term in~\eqref{eq:Zcpx}. 
 
 For $n>0$, the maps $\partial^n_i(f)$ for $0<i<n+1$ are computed via the following commuting diagram:
\begin{equation*}
\footnotesize
\begin{tikzpicture}
\matrix (m) [matrix of math nodes,row sep=6.5em,column sep=6em,minimum width=2em]
{
	F(X_n)^{\vee}\otimes...\otimes F(X_0)^{\vee}\otimes X\otimes F(X_0)\otimes...\otimes F(X_n)& & Z_F^{n+1}(X)\\
	 & Z_F^n(X) &\\
	Y& &\\
};
\path[-stealth]
(m-1-1)  edge node [above] {$i^{(n+1)}_{X_0,...,X_n}(X)$} (m-1-3)
(m-1-1)  edge node [midway, sloped,below] {$i^{(n)}_{X_0,...,X_{i-1}\otimes X_i,...,X_n}(X)$} (m-2-2)
(m-1-1)  edge node [left] {$\tilde{f}_{X_0,...,X_{i-1}\otimes X_i,...,X_n}$} (m-3-1)
(m-2-2)  edge node [sloped, above left] {$f$} (m-3-1)
(m-1-3)  edge node [sloped, above] {$Z_F^{n-i}\left(\mu^F_{Z_F^{i-1}(X)}\right)$} (m-2-2)
(m-1-3)  edge[bend left, dashed] node [below right] {$\partial^n_{i}(f)$} (m-3-1);
\end{tikzpicture}
\end{equation*}
Here, 
the upper triangle follows from the definition of the multiplication~\eqref{eq:muF-def} of the monad $Z_F$, while the lower left triangle is the definition of $\tilde{f}$ from~\eqref{eq:tildef}.
Comparing the above commuting diagram to~\eqref{equation:isomorphismZ} where $\gamma=\tilde{f}$, it fixes the map $\partial^n_i(f)$ uniquely as those in the sum in~\eqref{eq:Zcpx}.

Finally, we find for 
$n\geq0$ the term $\partial^n_{n+1}(f)$ is computed via the commuting diagram
\begin{equation*}
\footnotesize
\begin{tikzpicture}
\matrix (m) [matrix of math nodes,row sep=4.5em,column sep=4em,minimum width=2em]
{
	F(X_n)^{\vee}\otimes...\otimes F(X_0)^{\vee}\otimes X\otimes F(X_0)\otimes...\otimes F(X_n)& & Z_F^{n+1}(X)\\
	F(X_n)^{\vee}\otimes Y\otimes F(X_n)& Z_F(Y) &\\
	F(X_n)^{\vee}\otimes F(X_n)\otimes Y& &\\
	Y& &  \\
};
\path[-stealth]

(m-1-1)  edge node [left] {$\id_{F(X_n)^{\vee}}\otimes \tilde{f}\otimes \id_{F(X_n)}$} (m-2-1)
(m-2-1)  edge node [left] {$\id_{F(X_n)^{\vee}}\otimes\rho^Y_{X_n}$} (m-3-1)
(m-3-1)  edge node [left] {$\ev_{F(X_n)}\otimes\id_Y$} (m-4-1)
(m-1-1)  edge node [above] {$i^{(n+1)}_{X_0,...,X_n}(X)$} (m-1-3)
(m-2-1)  edge node [above] {$i_{X_n}(Y)$} (m-2-2)
(m-2-2)  edge node [sloped, above left] {$\beta_Y$} (m-4-1)
(m-1-3)  edge node [above, sloped] {$Z_F(f)$} (m-2-2)
(m-1-3)  edge[bend left, dashed] node [below right] {$\partial^n_{n+1}(f)$} (m-4-1)
;
\end{tikzpicture}
\end{equation*}
This  works analogous to the first diagram for $\partial^n_{0}$:
the upper triangle is by definition of~$Z_F(f)$, while the lower one is  by  definition~\eqref{equation:Module} of $\beta_Y$.
\end{proof}
We conclude the proof of Theorem~\ref{theorem:Main} by observing that the differential $\partial$ obtained in Lemma~\ref{lemma:DYMonad} is precisely of the form required in Proposition~\ref{proposition:MonadComonad}.
\end{proof}
\begin{Remark}
For the special case of trivial coefficients and $F=\id_{\Cat}$, a reformulation of Davydov-Yetter cohomology as a `Hochschild cohomology in tensor categories' is stated in \cite[Prop.~7.22.7]{EGNO}. The algebra in question is the `canonical algebra' $A$ in the tensor category $\Cat\boxtimes \Cat^{op}$, where $\boxtimes$ is the Deligne product, and it can be written as $A=\int^{X\in \Cat} X^{\vee}\boxtimes X$, see~\cite{Sh}. Therefore, due to Lemma~\ref{lemma:DinatDY} the Hochschild complex for $A$ is isomorphic to the complex introduced in Lemma~\ref{lemma:DYMonad} for $F=\id_{\Cat}$ and $\mathsf{X}=\mathsf{Y}=\mathsf{I}$.
\end{Remark}

\subsection{Ocneanu Rigidity}\label{ssec:Ocneanu}
An immediate application of Theorem \ref{theorem:Main} is a conceptual proof of Ocneanu rigidity. In this subsection we assume additionally that the field $k$ is of characteristic $0$ and algebraically closed.
Ocneanu rigidity in the sense that $H^n_{DY}(F)=0$ for a tensor functor $F$ between fusion categories and for all $n>0$ is proven in \cite[Sec.~7]{ENO}, using
 semisimple weak Hopf algebras. It is based on the construction of a homotopy contraction for the complex defining Davydov-Yetter cohomology, which makes crucial use of a left integral $\mu$ of the weak Hopf algebra such that $\mu(1)\neq 0$. The proof does not hold for \textsl{non-semisimple} finite tensor categories, including the case of weak Hopf algebra. The reason for this is that Maschke's theorem implies the absence of such left integrals for non-semisimple (weak) Hopf algebras. As will be shown in Section~\ref{ssec:example}, there are indeed examples of non-semisimple finite tensor categories with non-trivial Davydov-Yetter cohomology.

\begin{Lemma}\label{lemma:semisimpleZ-mod}
	Let $F\colon\Cat\to\Dat$ be a tensor functor between semisimple finite tensor categories. Then $Z_F\mathrm{-mod}$ is a semisimple finite tensor category.
\end{Lemma}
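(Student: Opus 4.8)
The plan is to show that $Z_F\mathrm{-mod}$ is semisimple by exhibiting it, via Proposition~\ref{theorem:equivalencecentralizer}, as the centralizer $\mathcal{Z}(F)$, and then arguing that this centralizer is semisimple whenever $\Cat$ and $\Dat$ are. The rigidity, finiteness, and tensor structure of $Z_F\mathrm{-mod}$ come for free: $Z_F$ is a Hopf monad (as recalled before Proposition~\ref{theorem:equivalencecentralizer}), so $Z_F\mathrm{-mod}$ is automatically a rigid $k$-linear monoidal category, and finiteness is inherited because the forgetful functor $\U_F\colon Z_F\mathrm{-mod}\to\Dat$ is exact, faithful, and has exact left adjoint $\F_F$, so it reflects finite length. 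Hence the entire content of the lemma is the \emph{semisimplicity} of the abelian category underlying $Z_F\mathrm{-mod}$.

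For semisimplicity the cleanest route is to prove that every object of $Z_F\mathrm{-mod}$ is projective, or equivalently that every short exact sequence splits. First I would observe that the free functor $\F_F\colon\Dat\to Z_F\mathrm{-mod}$ sends projectives to projectives (its right adjoint $\U_F$ is exact), and since $\Dat$ is semisimple every object of $\Dat$ is projective; thus every object in the image of $\F_F$ is projective in $Z_F\mathrm{-mod}$. Next, the counit $\varepsilon\colon \F_F\U_F \to \id$ of the adjunction is a split epimorphism on each object $\mathsf{M}=(M,\beta)$ of $Z_F\mathrm{-mod}$: indeed for a module, $\beta\colon Z_F(M)\to M$ is split by the unit $\eta^F_M\colon M\to Z_F(M)$ because $\beta\circ\eta^F_M=\id_M$ (this is exactly the unit axiom \eqref{eq:Tmod} for the $Z_F$-module $\mathsf{M}$), and this splitting is a morphism in $Z_F\mathrm{-mod}$. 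Therefore $\mathsf{M}$ is a direct summand (a retract) of $\F_F\U_F(\mathsf{M})=\F_F(M)$, which is projective; hence $\mathsf{M}$ is projective. Since every object is projective, the abelian category $Z_F\mathrm{-mod}$ is semisimple.

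The one point deserving care is whether the retraction splitting $\eta^F_M$ of the counit is genuinely a morphism of $Z_F$-modules, so that it witnesses $\mathsf{M}$ as a summand \emph{in} $Z_F\mathrm{-mod}$ rather than merely in $\Dat$. On $\mathsf{M}=(M,\beta)$ the counit $\varepsilon_{\mathsf{M}}$ is $\beta$ itself, viewed as a map $\F_F(M)=(Z_F(M),\mu^F_M)\to (M,\beta)$, and its module-map property is precisely the first diagram in~\eqref{eq:Tmod}; the candidate splitting is $\eta^F_M$, whose compatibility $\beta\circ Z_F(\eta^F_M)=\eta^F_M\circ\beta$ with the module structures must be checked. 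I expect this to follow directly from naturality of the unit $\eta^F$ together with the monad unit axiom, and it is the only genuinely non-formal verification; everything else is standard adjunction bookkeeping.

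An equivalent and perhaps more transparent packaging of the same argument uses $G$-projectivity from Section~\ref{ssec:comonads}: by Lemma~\ref{lemma:projectiveisGprojective}, since $\U_F$ is faithful, projectives in $Z_F\mathrm{-mod}$ are $G_F$-projective; and conversely the splitting above shows every object is a retract of $\F_F\U_F(\mathsf{M})=G_F(\mathsf{M})$, hence $G_F$-projective by Corollary~\ref{cor:G-proj}. In the semisimple situation these two notions collapse and force every object to be projective. Either way, the conclusion is that $Z_F\mathrm{-mod}$ is a semisimple finite rigid monoidal $k$-linear category, i.e.\ a semisimple finite tensor category, as claimed.
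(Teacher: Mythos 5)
Your argument has a genuine gap at exactly the point you flagged: the unit $\eta^F_M\colon M\to Z_F(M)$ is \emph{not} a morphism of $Z_F$-modules from $(M,\beta)$ to $\F_F(M)=(Z_F(M),\mu^F_M)$, except in degenerate cases. The module-map condition for $\eta^F_M$ reads
\begin{equation*}
\mu^F_M\circ Z_F\bigl(\eta^F_M\bigr)=\eta^F_M\circ\beta ,
\end{equation*}
and by the monad unit axiom the left-hand side is $\id_{Z_F(M)}$; so the condition forces $\eta^F_M\circ\beta=\id_{Z_F(M)}$, i.e.\ that $\beta$ is an isomorphism with inverse $\eta^F_M$. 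This fails whenever $Z_F(M)\not\cong M$, which is the generic situation (already for $\Cat=\Dat=\Hmod$ and $F=\id$ one has $Z(V)=H^*\otimes V$, and the unit $v\mapsto\varepsilon\otimes v$ visibly does not intertwine the $H^*$-actions). So the splitting $\beta\circ\eta^F_M=\id_M$ from \eqref{eq:Tmod} exists only in $\Dat$, not in $Z_F\mathrm{-mod}$: $\mathsf{M}$ is a retract of $\F_F(M)$ after applying the forgetful functor, but not as a module. This is precisely the distinction between projectivity and $G$-projectivity that the paper's Section 2 is built around, and your argument collapses it illegitimately. A useful sanity check that something must be wrong: your splitting step nowhere uses semisimplicity of $\Cat$ or $\Dat$, so if it were correct it would show that \emph{every} object of $Z_F\mathrm{-mod}$ is $G_F$-projective for an arbitrary finite tensor category, whence by Proposition~\ref{theorem:G-acyclicity} and Theorem~\ref{theorem:Main} all Davydov-Yetter cohomology would vanish in positive degrees --- contradicting Theorem~\ref{proposition:DYSweedler}, where $H^2_{DY}(B_k\mathrm{-mod})\neq 0$.

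The paper's proof goes a different and unavoidable route: it invokes Maschke's theorem for Hopf monads (Brugui\`eres--Virelizier), which says that $Z_F\mathrm{-mod}$ is semisimple if and only if $Z_F$ admits a \emph{normalized cointegral} $\Lambda^F\colon I_\Dat\to Z_F(I_\Dat)$. Semisimplicity of the Drinfeld center $\mathcal{Z}(\Cat)$ of a fusion category (over an algebraically closed field of characteristic $0$) gives a normalized cointegral $\Lambda$ for the central monad $Z$ on $\Cat$, and one then checks that $\Lambda^F:=F(\Lambda)$ is a normalized cointegral for $Z_F$, using that $F$ is exact and hence compatible with the coends defining $Z_F$. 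The existence of such a (co)integral is exactly the global datum that replaces your nonexistent module-theoretic splitting; without it, the retraction you want simply is not there. If you wish to repair your write-up, you would need to produce, for each $\mathsf{M}$, a splitting of $\varepsilon_{\mathsf{M}}=\beta$ \emph{in} $Z_F\mathrm{-mod}$, and any construction of such splittings will in effect reprove Maschke's theorem.
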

\begin{proof}
That   $Z_F\mathrm{-mod}$ is a finite $k$-linear category	 was proven in 
	\cite[Thm.~3.3]{Maj2} and \cite[Thm.~3.4]{Sh2}. It also follows from the discussion in \cite[Sec.~3.3]{Sh2} that $Z_F\mathrm{-mod}$ has a canonical structure of a tensor category. 
	
	 To show that  $Z_F\mathrm{-mod}$ is semisimple we  use Maschke's theorem for Hopf monads~\cite[Thm.~6.5~\&~Rem.~6.2]{BV1}. For a given Hopf monad $T$, the theorem states that the category $T\mathrm{-mod}$ is semisimple if and only if  $T$ admits a \textit{normalized cointegral}. We recall that a cointegral for a bimonad $T$ is  a morphism $\Lambda\colon I \to T(I)$ such that
	 \be
	 \mu_I \circ T(\Lambda) = \Lambda\circ \alpha,
	 \ee
	 where $\alpha\colon T(I)\to I$  is the structural map of the bimonad $T$,
recall the discussion above~\eqref{eq:alpha}.
	 	  A cointegral of $T$ is called \textit{normalized} if 
	  \be
	  \alpha\circ\Lambda=\id_{I}.
	  \ee
	  In our case of the Hopf monad $T=Z_F$ on $\Dat$, a normalized cointegral will be denoted by  
	  $$
	  \Lambda^F\colon I_\Dat\to Z_F(I_\Dat)
	  $$ 
	  and it should satisfy (if exists)
	\begin{equation}\label{eq:norm-coint-F}
	 \mu_{I_{\Dat}}^F\circ Z_F(\Lambda^F)=\Lambda^F\circ\alpha^F	\qquad \text{and}\qquad \alpha^F\circ\Lambda^F=\id_{I_\Dat},
	 \end{equation} 
	 where $\alpha^F$ is the structural map of $Z_F$ from~\eqref{eq:alpha}.
	 Therefore, to prove semisimplicity of $Z_F\mathrm{-mod}$ it is enough to show existence of  such a normalized cointegral $\Lambda^F$.

	 We first recall that the Drinfeld center $\mathcal{Z}(\Cat)$ of a fusion category $\Cat$ over an algebraically closed field of characteristic $0$ is semisimple,
	  see e.g.\ \cite[Thm.~9.3.2]{EGNO}, and is equivalent to $Z_F\mathrm{-mod}$ for $F=\id$.
	 Therefore, by Maschke's theorem, the central Hopf monad $Z$ admits a normalized cointegral $\Lambda:=\Lambda^{\id}$ satisfying~\eqref{eq:norm-coint-F} for $F=\id$.

	  We claim that 
	 \begin{equation}
	 \Lambda^F:=F(\Lambda)
	 \end{equation}
	  is a normalized cointegral for $Z_F$  for any tensor functor $F\colon\Cat\to\Dat$ between fusion categories. Indeed, we have that $F$ is exact as it is an additive functor between semisimple categories and therefore $F$ preserves colimits. Coends are a special case of colimits, 
	  and therefore for the coends $Z_F(V)$ in~\eqref{eq:ZF-def} we can choose  
	  $$
	  Z_F\bigl(F(M)\bigr) :=  F\bigl(Z(M)\bigr), \qquad M\in\Cat,
	  $$
	  and  for the corresponding dinatural transformations~\eqref{iF-def} 
  $$
  i_{X}^F\bigl(F(M)\bigr):=F\bigl(i_X(M)\bigr), \qquad X,M\in\Cat.
  $$ 
	   With this choice and the fact that $F$ is a strict tensor functor, we  obtain for the corresponding bimonad structure on $Z_F$:
	 $$
	 \mu^F_{I_{\Dat}}=F(\mu_{I_{\Cat}}) \qquad 
	  \text{and}\qquad \eta^F_{I_{\Dat}}=F(\eta_{\Cat})
	 $$ 
	 and
	 $$
	 \Psi^F_{FV,FW}=F(\Psi_{V,W})\qquad\text{and}\qquad \alpha^F=F(\alpha).
	 $$
	  Recall their definitions in~\eqref{eq:muF-def},~\eqref{eq:def-eta},~\eqref{eq:def_comultiplication} and~\eqref{eq:alpha}, correspondingly.
	    Moreover, we have $Z_F(\Lambda^F)=F(Z(\Lambda))$. 
	    
Recall now that~\eqref{eq:norm-coint-F} holds for $F=\id$, then we have
	  \begin{equation}
	  \mu_{I_\Dat}^F\circ Z_F(\Lambda^F)
	  =F\bigl(\mu_{I_\Cat}\circ Z(\Lambda)\bigr)\stackrel{\eqref{eq:norm-coint-F}}{=}F(\Lambda\circ\alpha)=\Lambda^F\circ\alpha^F
	  \end{equation}
	  and similarly
	  \begin{equation}
	  \alpha^F\circ\Lambda^F=F(\alpha\circ \Lambda)=F(\id_{I_{\Cat}})=\id_{I_{\Dat}}.
	  \end{equation}
	 We have thus shown that $F(\Lambda)$ is a normalized cointegral of $Z_F$, as claimed above, and therefore  $Z_F\mathrm{-mod}$ is semisimple by Maschke's theorem  for Hopf comonads.
\end{proof}
As a corollary, we can now use the relation to comonad cohomology in Theorem~\ref{theorem:Main} to obtain a new proof of the following generalization of Ocneanu rigidity.
\begin{Cor}[Ocneanu rigidity with coefficients]\label{cor:ocneanu}
	Let $F\colon\Cat\to\Dat$ be a tensor functor between semisimple finite tensor categories. Then, $H_{DY}^n(F,\mathsf{X},\mathsf{Y})=0$ for all $n>0$ and for all $\mathsf{X},\mathsf{Y}
\in \mathcal{Z}(F)$. In particular, we have $H^n_{DY}(F)=0$ for all $n>0$.
\end{Cor}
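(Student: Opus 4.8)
The plan is to transport the entire problem to comonad cohomology via Theorem~\ref{theorem:Main} and then exploit the fact that the comonad cohomology of $G_F$-projective objects vanishes. Concretely, Theorem~\ref{theorem:Main} identifies $H^n_{DY}(F,\X,\Y)$ with the comonad cohomology $H^n\bigl(\X,\Hom_{Z_F\mathrm{-mod}}(?,\Y)\bigr)_{G_F}$, where $\X$ is regarded as an object of $Z_F\mathrm{-mod}$ through~\eqref{equation:Module}. Thus it suffices to show that this comonad cohomology vanishes in all positive degrees, and by Proposition~\ref{theorem:G-acyclicity} this follows the moment we know that the coefficient object $\X$ is $G_F$-projective. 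I would emphasise that the vanishing in that proposition is uniform in the coefficient functor, so a single $G_F$-projectivity statement handles $\Hom_{Z_F\mathrm{-mod}}(?,\Y)$ for every $\Y$ simultaneously.

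The crux is therefore to establish $G_F$-projectivity of every object of $Z_F\mathrm{-mod}$. Here I would invoke the semisimplicity of $Z_F\mathrm{-mod}$ furnished by Lemma~\ref{lemma:semisimpleZ-mod}: in a semisimple category every object is projective, so in particular $\X$ is projective. The comonad $G_F=\F_F\circ\U_F$ is induced by the adjunction $\F_F\dashv\U_F$ whose right adjoint is the forgetful functor $\U_F\colon Z_F\mathrm{-mod}\to\Dat$, and forgetful functors out of an Eilenberg-Moore category are always faithful. Lemma~\ref{lemma:projectiveisGprojective} then converts projectivity into $G_F$-projectivity, so every projective object of $Z_F\mathrm{-mod}$, hence $\X$, is $G_F$-projective.

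With $\X$ shown to be $G_F$-projective, Proposition~\ref{theorem:G-acyclicity} gives $H^n\bigl(\X,\Hom_{Z_F\mathrm{-mod}}(?,\Y)\bigr)_{G_F}=0$ for all $n>0$, and Theorem~\ref{theorem:Main} transports this back to $H^n_{DY}(F,\X,\Y)=0$ for all $n>0$ and all $\X,\Y\in\mathcal{Z}(F)$. The concluding assertion $H^n_{DY}(F)=0$ is then simply the special case $\X=\Y=\mathsf{I}$ of the tensor unit.

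The only genuinely substantial input is the semisimplicity statement of Lemma~\ref{lemma:semisimpleZ-mod}, which is where the hypothesis that $\Cat$ and $\Dat$ are semisimple, together with the normalized cointegral argument, is consumed; within the present corollary that work is already done. The remaining obstacle is thus merely bookkeeping: checking that the forgetful functor $\U_F$ is faithful (which is automatic for Eilenberg-Moore categories) and matching up the variance so that Proposition~\ref{theorem:G-acyclicity} applies to the contravariant coefficient functor $\Hom_{Z_F\mathrm{-mod}}(?,\Y)$.
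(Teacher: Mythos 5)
Your proposal is correct and follows essentially the same route as the paper's own proof: reduce to comonad cohomology via Theorem~\ref{theorem:Main}, use Lemma~\ref{lemma:semisimpleZ-mod} to get semisimplicity of $Z_F\mathrm{-mod}$ (hence projectivity of every object), upgrade projectivity to $G_F$-projectivity via faithfulness of $\U_F$ and Lemma~\ref{lemma:projectiveisGprojective}, and conclude by Proposition~\ref{theorem:G-acyclicity}. The only step the paper states explicitly that you leave implicit is that $F$ is automatically exact (being additive between semisimple categories), which is what guarantees the existence of $Z_F$ required to invoke Theorem~\ref{theorem:Main} in the first place.
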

\begin{proof}
Every additive functor between semisimple categories is exact. Thus, the monad $Z_F$ exists and by  Theorem~\ref{theorem:Main} we can formulate Davydov-Yetter cohomology of $F$ as the comonad cohomology associated to $G_F$.
By Proposition~\ref{theorem:G-acyclicity},  the comonad cohomology of a $G_F$-projective object is $0$. It thus suffices to prove that any coefficient $\mathsf{X}$ in $Z_F\mathrm{-mod}$ is $G_F$-projective. 

The right adjoint in $\F_F\dashv \U_F$ is the forgetful functor and therefore faithful. Hence by Lemma~\ref{lemma:projectiveisGprojective}, every projective object in $Z_F\mathrm{-mod}$ is $G_F$-projective as well.  However, all  objects  in   $Z_F\mathrm{-mod}$  are projective, because $Z_F\mathrm{-mod}$ is semisimple by Lemma~\ref{lemma:semisimpleZ-mod}. 
\end{proof}
\begin{Remark}
 Lemma~\ref{lemma:semisimpleZ-mod} and thus Corollary~\ref{cor:ocneanu} remain true for any algebraically closed field $k$ in the case that $\dim \Cat\neq 0$.	
	 This is indeed	the case where the Drinfeld center $\mathcal{Z}(\Cat)$ of a fusion category $\Cat$ remains semisimple (compare with the proof of~\cite[Thm.~9.3.2]{EGNO}).
\end{Remark}

\section{Finite dimensional Hopf algebras}\label{sec:RepresentationCategories}
In this section, we apply constructions and results obtained in the two previous sections to the case of Hopf algebras.

 We consider a finite dimensional Hopf algebra  
 $(H,\mu,1,\Delta,\varepsilon,S)$ over a field $k$, where $\mu$ denotes the algebra multiplication, $1$ is the unit in $H$, $\Delta$ is the comultiplication, $\varepsilon$ is the counit, and $S$ is the antipode. We will use Sweedler's notation for comultiplication: 
 $$
 \Delta(h)= h_{(1)}\otimes h_{(2)}.
 $$
 
By $\Hmod$ we denote the rigid category of finite dimensional (left) modules over~$H$.
In Subsection~\ref{ssec:CentralH-mod}, we  describe the central monad $Z$ and the corresponding comonad $G$ for the case $\Cat=\Hmod$ and $F=\id$, together with the bar resolution and the corresponding Davydov-Yetter complex. 
 In Subsection~\ref{ssec:G-proj-Hopf},  we discuss the notion of $G$-projective modules and relate them to $H^*$ projectiveness.
 In Subsection~\ref{ssec:forgetful}, we study the Davydov-Yetter complex of the forgetful functor and reformulate it as Davydov-Yetter complex of the identity functor with a non-trivial coefficient. 

Let us introduce the following $H$-modules:
\begin{itemize}
	\item The \emph{trivial module} $_{\varepsilon}V$ associated to a vector space $V$. The action is $h.v=\varepsilon(h)v$ with $h\in H$ and $v\in V$.
	\item The \emph{regular module} $H_{\mathrm{reg}}$ is the vector space $H$ with the action being the left multiplication.
	\item The \emph{coregular module} $H^*_{\mathrm{coreg}}$ is the vector space $H^*$ with the action defined by 
	$$
	h.f=f(?h)\ .
	$$ 
	\item The \emph{coadjoint module} $H^*_{\mathrm{coad}}$ is $H^*$ as a vector space with the action 
	\be\label{eq:coad}
	h.f=f\bigl(S(h_{(1)})?h_{(2)})\bigr)\ .
	\ee
	\item The  module $\left(H^{*\otimes n}\otimes V\right)_{\mathrm{coad}}$, for any  $V\in\Hmod$ and $n\geq1$, with the action 
	\begin{equation}\label{eq:HV-coadj}
	h.(a_1\otimes...\otimes a_n \otimes v)=a_{1}\left(S(h_{(1)})?h_{(2n+1)}\right)\otimes\dots\otimes a_n\left(S(h_{(n)})?h_{(n+2)}\right)\otimes  h_{(n+1)}v,
	\end{equation}
	for $a_i\in H^*$, $1\leq i \leq n$, and   $v\in V$.
	 Notice that this module is in general not isomorphic to the $n$-fold tensor product of $H^*_{\mathrm{coad}}$ and $V$.
\end{itemize}
Furthermore, we note that the vector space $H^*$ admits a canonical Hopf-algebra structure with the unit $1_{H^*}:=\varepsilon$ and the multiplication $\mu_{H^*}$   defined by
\begin{equation}\label{equation:multiplication}
 \mu_{H^*}(f\otimes g)(h):=(f*g)(h):=f(h_{(2)})g(h_{(1)})
\end{equation}
 for $h\in H$,  the comultiplication is $\Delta_{H^*}=\mu^*$ and the counit is defined by $\varepsilon_{H^*}\colon f\mapsto f(1)$.

 \subsection{The central monad for $\Hmod$}\label{ssec:CentralH-mod}
Recall for this subsection the definition of the central monad $Z=Z_{\id}$ in Subsection \ref{ssec:CentralMonad}.
\begin{Proposition}\label{proposition:enocomplex}
	The central monad $Z$ on $\Hmod$ is given by the following data:
 \begin{itemize}
 \item As a functor, it sends $V$ to $(H^*\otimes V)_{\mathrm{coad}}$, i.e.\ $Z(V)=H^*\otimes_k V$ with $H$-action given by
	\begin{equation}
	h.(f\otimes v)=f\left(S\left(h_{(1)}\right)?h_{(3)}\right)\otimes h_{(2)}.v,
	\end{equation} 
	for $f\in H^*$, $h\in H$ and $v\in V$. It acts on a morphism $\psi\colon V\to W$ as $Z(\psi)=\id_{H^*}\otimes \psi$. 
	\item The multiplication $\mu_V\colon Z^2(V)\to Z(V)$ given by 
	\begin{equation}\label{eq:mu-Z}
	\mu_V(f\otimes g\otimes v)=(f*g)\otimes v,
	\end{equation}
	with $*$ defined in~\eqref{equation:multiplication}, $f,g\in H^*$ and $v\in V$.
	\item The unit $\eta_V\colon V\to Z(V)$ is given by $\eta_V(v)=\varepsilon\otimes v$.
	\end{itemize}
\end{Proposition}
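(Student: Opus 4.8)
The plan is to identify the coend \eqref{eq:ZF-def} for $F=\id$ and $\Cat=\Hmod$ explicitly, by exhibiting a universal dinatural transformation whose target is $(H^*\otimes V)_{\mathrm{coad}}$, and then to read off the unit and multiplication from the general formulas \eqref{eq:def-eta} and \eqref{eq:muF-def}. First I would write down the candidate dinatural transformation. For $X\in\Hmod$ the right dual $X^\vee$ has underlying space $X^*$ with action $(h.\phi)(x)=\phi(S(h).x)$ and $\ev_X(\phi\otimes x)=\phi(x)$; define
\[
i_X(V)\colon X^\vee\otimes V\otimes X\to H^*\otimes V,\qquad \phi\otimes v\otimes x\mapsto \phi(?\,.\,x)\otimes v ,
\]
where $\phi(?\,.\,x)\in H^*$ denotes the functional $h\mapsto \phi(h.x)$. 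A direct check gives dinaturality: for $f\colon X\to Y$ both composites send $\psi\otimes v\otimes x$ to $[h\mapsto\psi(f(h.x))]\otimes v$, using that $f$ is $H$-linear. Imposing that $i_X$ be a morphism in $\Hmod$ then \emph{forces} the action on the target: expanding $i_X\bigl(h.(\phi\otimes v\otimes x)\bigr)$ with $h.(\phi\otimes v\otimes x)=(h_{(1)}.\phi)\otimes(h_{(2)}.v)\otimes(h_{(3)}.x)$ yields exactly $f(S(h_{(1)})\,?\,h_{(3)})\otimes h_{(2)}.v$ for $f=\phi(?\,.\,x)$, which is the coadjoint action of \eqref{eq:coad} tensored with $V$. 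This is the first bullet of the Proposition, and $Z(\psi)=\id_{H^*}\otimes\psi$ follows from functoriality of the coend.

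The crux is universality. Here I would use the regular module $H_{\mathrm{reg}}$ together with, for each $X$ and $x\in X$, the $H$-linear map $\rho_x\colon H_{\mathrm{reg}}\to X$, $a\mapsto a.x$. Given any $H$-linear dinatural family $g_X\colon X^\vee\otimes V\otimes X\to W$, I would define $\hat g\colon H^*\otimes V\to W$ by $\hat g(\xi\otimes v):=g_{H_{\mathrm{reg}}}(\xi\otimes v\otimes 1)$, where $\xi\in H^*\cong H_{\mathrm{reg}}^\vee$. Applying dinaturality to $\rho_x$ and evaluating on $\phi\otimes v\otimes 1$ gives $g_X(\phi\otimes v\otimes x)=g_{H_{\mathrm{reg}}}\bigl(\phi(?\,.\,x)\otimes v\otimes 1\bigr)=\hat g\bigl(i_X(\phi\otimes v\otimes x)\bigr)$, which is the required factorization $\hat g\circ i_X=g_X$; uniqueness is immediate since $i_{H_{\mathrm{reg}}}(\xi\otimes v\otimes 1)=\xi\otimes v$. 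Finally $H$-linearity of $\hat g$ follows because $i_{H_{\mathrm{reg}}}$ is an $H$-linear epimorphism (surjective, as its restriction at $a=1$ is the identity) and $\hat g\circ i_{H_{\mathrm{reg}}}=g_{H_{\mathrm{reg}}}$ is $H$-linear. I expect this universality step to be the main obstacle, chiefly because one must keep the dual-module, evaluation, and $(X\otimes Y)^\vee\cong Y^\vee\otimes X^\vee$ pairing conventions consistent so that the coadjoint action and convolution come out exactly as stated.

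With the coend identified, the monad data are extracted from the general formulas. The unit is $\eta_V=i_I(V)$ by \eqref{eq:def-eta}: for the tensor unit $I={}_\varepsilon k$ one has $I^\vee=k$, so $i_I(1\otimes v\otimes 1)=[h\mapsto\varepsilon(h)]\otimes v=\varepsilon\otimes v$, the third bullet. For the multiplication I would use \eqref{eq:muF-def}, namely $\mu_V\circ i^{(2)}_{X,Y}=i_{X\otimes Y}$. Computing $i^{(2)}_{X,Y}$ from \eqref{iF-def}–\eqref{eq:i2} sends $\psi\otimes\phi\otimes v\otimes x\otimes y$ to $[h\mapsto\psi(h.y)]\otimes[h'\mapsto\phi(h'.x)]\otimes v$, whereas $i_{X\otimes Y}$ of the same element, using $h.(x\otimes y)=h_{(1)}.x\otimes h_{(2)}.y$ and the pairing on $(X\otimes Y)^\vee\cong Y^\vee\otimes X^\vee$, gives $[h\mapsto\phi(h_{(1)}.x)\,\psi(h_{(2)}.y)]\otimes v$. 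Matching the two forces $\mu_V(f\otimes g\otimes v)=[h\mapsto g(h_{(1)})f(h_{(2)})]\otimes v=(f*g)\otimes v$ with $*$ the convolution \eqref{equation:multiplication}, establishing the second bullet. These last identifications are routine Sweedler computations once the dinatural transformation $i_X$ is in place.
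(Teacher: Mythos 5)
Your proposal is correct, and its skeleton coincides with the paper's proof: you exhibit the same universal dinatural transformation $i_X(\phi\otimes v\otimes x)=\phi(?\,.\,x)\otimes v$, check that $H$-linearity of $i_X$ produces exactly the coadjoint action on $H^*\otimes V$, and then read off the unit and multiplication from the defining equations \eqref{eq:def-eta} and \eqref{eq:muF-def}, including the correct matching with the opposite-coproduct convolution $(f*g)(h)=f(h_{(2)})g(h_{(1)})$ of \eqref{equation:multiplication}. The genuine difference is in the universality step: the paper does not prove it, but instead cites Lyubashenko and Kerler for the case $V=I$ and declares the general case analogous, whereas you give a complete, self-contained argument via the regular module $H_{\mathrm{reg}}$ --- defining $\hat g(\xi\otimes v)=g_{H_{\mathrm{reg}}}(\xi\otimes v\otimes 1)$, obtaining the factorization by applying dinaturality to the $H$-linear maps $\rho_x\colon a\mapsto a.x$, getting uniqueness from $i_{H_{\mathrm{reg}}}(\xi\otimes v\otimes 1)=\xi\otimes v$, and deducing $H$-linearity of $\hat g$ from surjectivity of $i_{H_{\mathrm{reg}}}$. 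This is essentially the argument contained in the cited references, so nothing conceptually new is introduced, but your version buys self-containedness (and handles general $V$ directly rather than by "analogy"), at the cost of length; the paper's version buys brevity by outsourcing precisely this step.
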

\begin{proof}
  The universal dinatural transformation is defined on components via 
  \begin{align}
  &i_X\colon\; X^{\vee}\otimes V\otimes X\to H^{^*}\otimes V,\notag\\
  &i_X(f\otimes v\otimes x)=f(?.x)\otimes v,
  \end{align}
  for $f \in X^{\vee}, x\in X$ and $v\in V$.
   It was proven for the case $V=I$ in \cite[Sec.~3.3]{Ly} and \cite[Lem.~3]{K} that this indeed yields a dinatural transformation with the universal property. The general case can be checked analogously. For the multiplication and the unit it is straightforward to check that the defining equations~\eqref{eq:muF-def} and~\eqref{eq:def-eta} are satisfied.
\end{proof}

A statement analogous to Proposition~\ref{proposition:enocomplex} was made in~\cite[Ex.~3.12]{Sh3} for the central \textsl{comonad}. 

 In the Hopf algebra case, the Drinfeld center of $\Hmod$ is equivalent to the category of finite dimensional modules over the Drinfeld double $D(H)$.
  As a vector space, the Drinfeld double\footnote{Our conventions here coincide with those of~\cite[Sec.\,7]{Maj3}.}
   of a finite dimensional Hopf algebra $H$ is 
\begin{equation}
D(H):=H^*\otimes_k H.
\end{equation}
This vector space admits an algebra structure with unit $1_{H^*}\otimes 1$ and multiplication such that $H^*\otimes 1$ and $1_{H^*}\otimes H$ are subalgebras identified with $(H^*,*)$ and $(H,\cdot)$, respectively, and 
\begin{equation}\label{eq:drinfeldmultiplication}
\psi\cdot h := \psi \otimes h\ , \quad
h \cdot \psi :=\psi\bigl(S(h_{(1)})? h_{(3)}\bigr)\otimes h_{(2)}\ , \qquad h\in H,\ \psi \in H^*\ ,
\end{equation}
where we identify $\psi\in H^*$ with $\psi\otimes 1$ and $h\in H$ with $1_{H^*}\otimes h$.

The following Proposition follows from~\cite{DS}.
\begin{Proposition}\label{proposition:DZ-isomorphism}
The categories $D(H)\mathrm{-mod}$ and $Z\mathrm{-mod}$ are isomorphic.
More precisely, an object $(V,\beta)\in Z\mathrm{-mod}$ corresponds to the unique $D(H)$-module with the underlying space $V$ and the following action:
\begin{equation}\label{eq:Z-DH-act-2}
(\psi\otimes h).v=\beta(\psi\otimes h.v),\qquad \psi\in H^*,\; h\in H,\; v\in V.
\end{equation}
where  $h.v$ denotes the $H$-action on $V$.\\
And conversely, a $D(H)$-module $V$ corresponds to the underlying $H$-module with the structure of $Z$-module $\beta\colon H^* \otimes V \to V$ given by the action of the subalgebra $H^*\subset D(H)$ on~$V$.
\end{Proposition}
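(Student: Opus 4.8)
The plan is to match the Eilenberg--Moore data of a $Z$-module with the data of a $D(H)$-module and to read the functors off directly from the multiplication rules. First I would unpack the axioms~\eqref{eq:Tmod} for a pair $(V,\beta)\in Z\mathrm{-mod}$ using the explicit formulas of Proposition~\ref{proposition:enocomplex}. Writing $\psi.v:=\beta(\psi\otimes v)$ for $\psi\in H^*$ and $v\in V$, the associativity axiom $\beta\circ Z(\beta)=\beta\circ\mu_V$ becomes $\psi.(g.v)=(\psi*g).v$ and the unit axiom becomes $\varepsilon.v=v$. Thus the monad-module structure $\beta$ is nothing but a left action of the subalgebra $(H^*,*)$ on $V$. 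The only remaining piece of data is that $\beta$ is required to be a morphism in $\Hmod$, i.e.\ $H$-linear with respect to the coadjoint-type action on $Z(V)=(H^*\otimes V)_{\mathrm{coad}}$. Spelling this out gives, for all $h\in H$, $\psi\in H^*$, $v\in V$,
\[
h.(\psi.v)=\bigl(\psi(S(h_{(1)})\,?\,h_{(3)})\bigr).(h_{(2)}.v).
\]

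Second, I would identify a $D(H)$-module with exactly the same bundle of data. Since $D(H)=H^*\otimes_k H$ factorizes into the subalgebras $H^*\otimes 1\cong(H^*,*)$ and $1_{H^*}\otimes H\cong H$, and is generated by them subject only to their internal relations together with the cross relation~\eqref{eq:drinfeldmultiplication}, an algebra map $D(H)\to\mathrm{End}_k(V)$ is the same as a left $H$-action together with a left $(H^*,*)$-action whose incompatibility is governed solely by $h\cdot\psi=\psi(S(h_{(1)})\,?\,h_{(3)})\otimes h_{(2)}$. Using $\psi\otimes h=(\psi\otimes 1)(1_{H^*}\otimes h)$ and the module axiom $(ab).v=a.(b.v)$, evaluating this cross relation on $v$ yields precisely the identity displayed above. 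Hence the $H$-linearity of $\beta$ \emph{is} the straightening relation of the double, and the two sets of data coincide term by term. I would then define the functors as in the statement: $(V,\beta)\mapsto V$ with $(\psi\otimes h).v:=\beta(\psi\otimes h.v)=\psi.(h.v)$, and conversely a $D(H)$-module to its underlying $H$-module equipped with $\beta(\psi\otimes v):=(\psi\otimes 1).v$.

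Third, I would verify that these assignments are mutually inverse and upgrade to an isomorphism of categories. On objects this is immediate: the round trips recover $\beta$ (using $\varepsilon.v=v$) and recover the $D(H)$-action (using the factorization $\psi\otimes h=(\psi\otimes 1)(1_{H^*}\otimes h)$). On morphisms, a $Z$-module map is by definition an $H$-linear map intertwining the $\beta$'s, hence simultaneously $H$- and $H^*$-linear, which is exactly $D(H)$-linearity; the converse is the same observation read backwards. Because the correspondence is bijective on objects and on Hom-sets, this is a genuine isomorphism of categories rather than merely an equivalence.

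The main obstacle, and the only real computation, is the Sweedler-notation check in the second step that $H$-linearity of $\beta$ over the coadjoint action on $Z(V)$ reproduces exactly~\eqref{eq:drinfeldmultiplication}. This is a bookkeeping exercise with the coproduct and antipode, and it works precisely because the coadjoint-type action~\eqref{eq:coad}, \eqref{eq:HV-coadj} used to build $Z(V)$ is assembled from the very straightening $\psi\mapsto\psi(S(h_{(1)})\,?\,h_{(3)})$ that defines the double. The abstract framework underpinning this identification is that of~\cite{DS}, which one may invoke to shortcut the categorical part; the explicit dictionary above is what makes the statement of Proposition~\ref{proposition:DZ-isomorphism} precise.
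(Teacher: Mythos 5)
Your proposal is correct and follows essentially the same route as the paper: both identify the Eilenberg--Moore axioms for $(V,\beta)$ with a left $(H^*,*)$-action and recognize the $H$-linearity of $\beta$ over the coadjoint-type action on $Z(V)$ as exactly the straightening relation $h\cdot\psi=\psi\bigl(S(h_{(1)})\,?\,h_{(3)}\bigr)\otimes h_{(2)}$ of the double, the paper doing this by directly verifying the three compatibility relations that you package via the presentation of $D(H)$. Your write-up is in fact slightly more complete, since it spells out the mutual inverseness and the correspondence on morphisms, which the paper leaves implicit.
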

\begin{proof}
	We check that the action in~\eqref{eq:Z-DH-act-2} is indeed a $D(H)$ action. Recall the relations~\eqref{eq:drinfeldmultiplication}.
For $\psi\in H^*$ and $h\in H$, we have
	\begin{equation}
	\psi.(h.v)= \beta(\psi\otimes h.v) = (\psi\otimes h).v =(\psi\cdot h).v \ ,
\ee
and
	\begin{align}
	h.(\psi.v)&=h.\beta(\psi\otimes v)=\beta\left(\psi\left(S\left(h_{(1)}?h_{(3)}\right)\right)\otimes h_{(2)}.v\right)\nonumber\\
	&=\left(\psi\left(S\left(h_{(1)}?h_{(3)}\right)\right)\otimes h_{(2)}\right).v=(h\cdot\psi).v
	\end{align}
	by the fact that $\beta\colon Z(V)\to V$ is an $H$-module homomorphism.
	Finally, we have for $\psi,\phi\in H^*$:
	\begin{equation}
	\psi.(\phi.v)=\beta(\psi\otimes\phi.v)=\beta(\psi\otimes \beta(\phi\otimes v))\stackrel{\dagger}{=}\beta(\psi*\phi\otimes v)=(\psi*\phi).v,
	\end{equation}
	where $\dagger$ is due to  commutativity of the left diagram in~\eqref{eq:Tmod} (for $T=Z$) and we also used~\eqref{eq:mu-Z}.
\end{proof}

We recall that $D(H)\mathrm{-mod}$ is monoidally equivalent to the Drinfeld center $\mathcal{Z}(\Hmod)$. Then the isomorphism in Proposition~\ref{proposition:DZ-isomorphism} is 
 a corollary of Proposition~\ref{theorem:equivalencecentralizer} for $F=\id$.

We can now reformulate  Davydov--Yetter complex for $\Hmod$ with coefficients using Lemma~\ref{lemma:DYMonad}. 
Recall that for an $H$-module $X$ we have $Z^n(X)=\left(H^{*\otimes n}\otimes X\right)_{\mathrm{coad}}$.
\begin{Cor}\label{cor:DYH-mod}
Given $D(H)$-modules $X$ and $Y$, the Davydov--Yetter complex of $\Hmod$ with coefficients in $X$ and $Y$  is
	\begin{equation}
	 C^n_{DY}(\Hmod,X,Y) \cong \Hom_{H}\bigl((H^{*\otimes n}\otimes X)_{\mathrm{coad}}, Y\bigr)
	 \end{equation}
	  with the differential 
	\begin{align}
\partial^n(f)
(a_0\otimes\dots\otimes a_n\otimes x)=& 
a_0. f(a_1\otimes\dots\otimes a_n\otimes x)\nonumber\\
	&+\sum_{i=1}^{n} (-1)^i f\bigl(a_0\otimes\dots\otimes (a_{i-1}* a_{i})\otimes\dots\otimes a_n\otimes x\bigr)\nonumber\\
	&+(-1)^{n+1}f(a_0\otimes\dots\otimes a_{n-1}\otimes a_n. x),
	\end{align}
	with $a_0,a_1,\dots,a_n\in H^*$ and $x\in X$.
\end{Cor}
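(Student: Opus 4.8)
The plan is to derive Corollary~\ref{cor:DYH-mod} as a direct specialization of Lemma~\ref{lemma:DYMonad} to the case $\Cat=\Dat=\Hmod$ and $F=\id$, using the explicit description of the central monad $Z$ from Proposition~\ref{proposition:enocomplex}. Lemma~\ref{lemma:DYMonad} already tells us that the Davydov--Yetter complex $C^{\bullet}_{DY}(\Cat,\X,\Y)$ is isomorphic to the complex with cochain groups $\Hom_{\Dat}(Z^n(X),Y)$ and differential $\partial^n$ given by~\eqref{eq:Zcpx}. So the entire task reduces to translating each piece of that abstract description into the concrete Hopf-algebraic formulas, under the dictionary of Proposition~\ref{proposition:DZ-isomorphism} that identifies $Z\mathrm{-mod}$ with $D(H)\mathrm{-mod}$.

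First I would identify the cochain groups: by Proposition~\ref{proposition:enocomplex} we have $Z(V)=(H^*\otimes V)_{\mathrm{coad}}$, and iterating gives $Z^n(X)=(H^{*\otimes n}\otimes X)_{\mathrm{coad}}$ with the $H$-action~\eqref{eq:HV-coadj}; here I should double-check that the iterated coadjoint action produced by composing the single-step action in Proposition~\ref{proposition:enocomplex} indeed matches the closed formula~\eqref{eq:HV-coadj}, which is a bookkeeping exercise with the comultiplication. Substituting into $\Hom_{\Dat}(Z^n(X),Y)=\Hom_H((H^{*\otimes n}\otimes X)_{\mathrm{coad}},Y)$ gives the claimed cochain groups. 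Next I would unwind the three families of terms in the differential~\eqref{eq:Zcpx}. The middle terms involve $\mu^F_{Z^{i-1}(X)}$, and by~\eqref{eq:mu-Z} the monad multiplication simply merges two adjacent tensor factors of $H^*$ via the convolution product~$*$; precomposing $f$ with $Z^{n-i}$ applied to this multiplication therefore replaces the pair $a_{i-1}\otimes a_i$ by $a_{i-1}*a_i$, yielding the sum over $i$ in the statement.

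The boundary terms require the translation of $\beta_X$ and $\beta_Y$. Under Proposition~\ref{proposition:DZ-isomorphism}, the $Z$-module structure map $\beta\colon H^*\otimes V\to V$ is precisely the action of the subalgebra $H^*\subset D(H)$. Thus the first term $f\circ Z^n(\beta_X)$ acts by letting the leftmost $H^*$-factor $a_0$ act on $X$ through its $D(H)$-module structure, giving $a_0.f(a_1\otimes\cdots\otimes a_n\otimes x)$ after I confirm that $Z^n(\beta_X)$ applied to $a_0\otimes\cdots\otimes a_n\otimes x$ produces $a_0.x$ in the appropriate slot — again a matter of chasing the dinaturality/action formulas. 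The last term $\beta_Y\circ Z(f)$ similarly converts the rightmost $H^*$-factor $a_n$ into the $H^*$-action on $Y$, producing $(-1)^{n+1}f(a_0\otimes\cdots\otimes a_{n-1}\otimes a_n.x)$; here I must be slightly careful about whether $a_n$ acts on $Y$ after $f$ or on $X$ before $f$, and the correct reading is that $Z(f)$ leaves the outer $H^*$ untouched while $\beta_Y$ then applies it, which by $H^*$-linearity of the construction is equivalent to the stated formula on the $X$-variable.

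The main obstacle will be the second bullet: verifying that the abstract universal-property descriptions of $\mu^F$ and of $\beta$ really do reduce to these elementwise formulas without sign or convolution-order errors. Concretely, the convolution product $*$ in~\eqref{equation:multiplication} is defined with a specific ordering ($f(h_{(2)})g(h_{(1)})$), and the monad multiplication~\eqref{eq:mu-Z} must be matched against it with exactly the right variance so that associativity of $\partial^n$ (equivalently, $\partial^{n+1}\circ\partial^n=0$) is inherited from the general theory rather than reproved. Since Lemma~\ref{lemma:DYMonad} already guarantees we have a genuine cochain complex isomorphic to the Davydov--Yetter one, I would not reverify closedness of the differential; the only real work is the faithful elementwise translation of each of the three terms, and the risk lies entirely in tracking Sweedler indices and the left/right placement of the $H^*$-actions. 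Everything else is immediate from the cited results.
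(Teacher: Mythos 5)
Your overall strategy is the same as the paper's: Corollary~\ref{cor:DYH-mod} is indeed obtained by specializing Lemma~\ref{lemma:DYMonad} to $F=\id$ on $\Hmod$, using the explicit monad of Proposition~\ref{proposition:enocomplex} and the dictionary of Proposition~\ref{proposition:DZ-isomorphism}, and your identification of the cochain groups is correct. But your elementwise translation of the differential~\eqref{eq:Zcpx} is wrong, and the way you resolve the ambiguity you yourself flagged is not valid. Since $Z$ adjoins its $H^*$ factor on the \emph{outside} and acts on morphisms by $Z(\psi)=\id_{H^*}\otimes\psi$, the map $Z^n(\beta_X)=\id_{H^*}^{\otimes n}\otimes\beta_X$ hits the \emph{innermost} factor $a_n$, so the first term $f\circ Z^n(\beta_X)$ of~\eqref{eq:Zcpx} evaluates to $f(a_0\otimes\dots\otimes a_{n-1}\otimes a_n.x)$, i.e.\ it is the \emph{last} term of the Corollary; dually, $\beta_Y\circ Z(f)$ evaluates to $a_0.f(a_1\otimes\dots\otimes a_n\otimes x)$, the \emph{first} term. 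You have these two exactly swapped. Your appeal to ``$H^*$-linearity of the construction'' to identify $a_0.f(\cdots)$ with $f(\cdots\otimes a_n.x)$ is false: the cochains are only $H$-linear, not $H^*$- or $D(H)$-linear (if they were, these two terms would agree and the differential would largely collapse). Likewise, $Z^{n-i}\bigl(\mu_{Z^{i-1}(X)}\bigr)$ merges the pair $a_{n-i}\otimes a_{n-i+1}$, not $a_{i-1}\otimes a_i$ as you claim.

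Once the translation is carried out correctly, the specialized differential is \emph{not} the one stated in the Corollary: after reindexing $j=n-i+1$ in the middle sum, every term of~\eqref{eq:Zcpx} acquires a factor $(-1)^{n+1}$ relative to the stated formula, so the two differentials differ by the alternating global sign $(-1)^{n+1}$. (Check $n=0$: the Lemma gives $f(a_0.x)-a_0.f(x)$, the Corollary gives $a_0.f(x)-f(a_0.x)$.) This is exactly what the paper records in Remark~\ref{remark:signisomorphis}, and to complete the proof one must still exhibit the isomorphism of complexes given by rescaling the $n$th cochain group by $+1$ if $n\equiv 1,2 \pmod 4$ and by $-1$ otherwise. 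Your proposal asserts that the three families of terms match on the nose and explicitly declines to do any further verification, so as written it would terminate with a differential disagreeing with the statement by this sign; the missing rescaling isomorphism, together with the swapped boundary terms and the spurious $H^*$-linearity argument, is the genuine gap.
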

\begin{Remark}\label{remark:signisomorphis}
	The differential
$\partial^n$ in Corollary~\ref{cor:DYH-mod} is $(-1)^{n+1}$ times
	the differential
$\partial^n$
 in Lemma~\ref{lemma:DYMonad}.
	The two complexes are isomorphic via the following isomorphism: The $n$th cochains are multiplied by a sign, which is $+1$ if $n$ is $1$ or $2$ modulo $4$ and $-1$ otherwise.
\end{Remark}
 \begin{Remark}
The complex from Corollary~\ref{cor:DYH-mod} with trivial coefficients is (up to an isomorphism) the complex that was introduced in \cite[Sec.\,6]{ENO} for weak Hopf algebras in order to prove Ocneanu rigidity.
\end{Remark}

Recall the comonad $G:=G_{\id}$ defined  in~\eqref{eq:G_T}  with the counit $\varepsilon$ in~\eqref{G-eps-def} for $T=Z$.
We have for $(V,\beta)\in Z\mathrm{-mod}$  and $n\geq1$
\be\label{eq:Gn-Hopf}
G^{n}\colon\; (V,\beta) \mapsto
\left((H^{*\otimes n}\otimes V)_{\mathrm{coad}},\,
\mu_{H^*}\otimes \id_{H^*}^{\otimes (n-1)}\otimes \id_{V}\right).
\ee
where the  $H$-module $(H^{*\otimes n}\otimes V)_{\mathrm{coad}}$ is defined in~\eqref{eq:HV-coadj}.
Notice that from  coassociativity of the coproduct we have
\begin{equation}
(H^*\otimes (H^*\otimes V)_{\mathrm{coad}})_{\mathrm{coad}}=(H^{*\otimes 2}\otimes V)_{\mathrm{coad}}.
\end{equation}

We note that using  the isomorphism in Proposition~\ref{proposition:DZ-isomorphism}, the   $H$-module $(H^{*\otimes n}\otimes V)_{\mathrm{coad}}$ in~\eqref{eq:Gn-Hopf} has also a $D(H)$-action where $H^*$ acts via $\mu_{H^*}\otimes \id_{H^{*\otimes (n-1)}\otimes V}$.
We now rewrite the bar resolution~\eqref{eq:barresolution} of $G$ using this action.
\begin{Cor}
For $X\in D(H)\mathrm{-mod}$, the bar resolution of $X$ associated to $G$ is a complex in $D(H)\mathrm{-mod}$ of  the form
\be\label{eq:G-DH-cpx}
\ldots\xrightarrow{d_n} (H^{*\otimes n}\otimes X)_{\mathrm{coad}}\xrightarrow{d_{n-1}}\ldots\xrightarrow{d_{2}}(H^{*\otimes 2}\otimes X)_{\mathrm{coad}}\xrightarrow{d_1}(H^{*}\otimes X)_{\mathrm{coad}}\xrightarrow{\beta} X\to 0
\ee
with 
$$
d_n= \id^{\otimes n}_{H^*}
\otimes \beta+\sum_{i=1}^n (-1)^{i}~\id_{H^*}^{\otimes (n-i)}\otimes \mu_{H^*}\otimes\id_{H^*}^{\otimes (i-1)}\otimes \id_X,
$$
and $\beta$ is the action of $H^*\subset D(H)$ on $X$. For the trivial $D(H)$-module, $\beta$ is given by $\varepsilon_{H^*}$.
\end{Cor}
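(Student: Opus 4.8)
The plan is to specialize the general comonad bar resolution of Definition~\ref{definition:BarResolution} to the concrete description of $Z$, $G$ and the equivalence $Z\mathrm{-mod}\cong D(H)\mathrm{-mod}$ obtained in Propositions~\ref{proposition:enocomplex} and~\ref{proposition:DZ-isomorphism}, and then simply read off what each term of the differential $d_n=\sum_{i=0}^n(-1)^iG^{n-i}(\varepsilon_{G^i(X)})$ from~\eqref{eq:barresolutiondelta} becomes. First I would record, using~\eqref{eq:Gn-Hopf}, that the objects of the resolution are exactly $G^n(X)=(H^{*\otimes n}\otimes X)_{\mathrm{coad}}$, so the shape of the complex displayed in~\eqref{eq:G-DH-cpx} is immediate; coassociativity of the coproduct guarantees that the iterated coadjoint actions agree, as noted just above the statement.

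The heart of the computation is to identify the two building blocks of $d_n$ in Hopf-algebraic terms. For the counit, \eqref{G-eps-def} says that for any $Z$-module $\mathsf{Y}=(Y,\beta_Y)$ the component $\varepsilon_{\mathsf{Y}}$ is just the structure map $\beta_Y\colon Z(Y)\to Y$. Applying this to $\mathsf{Y}=G^i(X)$ and reading off the structure map from~\eqref{eq:Gn-Hopf} gives
\begin{equation*}
\varepsilon_{G^i(X)}=\mu_{H^*}\otimes\id_{H^*}^{\otimes(i-1)}\otimes\id_X\quad(i\geq 1),\qquad \varepsilon_X=\beta,
\end{equation*}
where in the case $i=0$ the map $\beta$ is the $Z$-module structure of $X$, i.e.\ the action of $H^*\subset D(H)$ by Proposition~\ref{proposition:DZ-isomorphism}. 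For the functor applied to morphisms, I would use $G=\F\circ\U$ together with $\F(\phi)=Z(\phi)=\id_{H^*}\otimes\phi$ from Proposition~\ref{proposition:enocomplex}, so that on underlying linear maps $G^{n-i}$ acts simply as $\id_{H^*}^{\otimes(n-i)}\otimes(-)$. Combining the two identities,
\begin{equation*}
G^{n-i}(\varepsilon_{G^i(X)})=\id_{H^*}^{\otimes(n-i)}\otimes\mu_{H^*}\otimes\id_{H^*}^{\otimes(i-1)}\otimes\id_X\quad(i\geq 1),\qquad G^n(\varepsilon_X)=\id_{H^*}^{\otimes n}\otimes\beta,
\end{equation*}
and summing with the signs $(-1)^i$ reproduces verbatim the stated formula for $d_n$. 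The bottom map $d_0=\varepsilon_X=\beta$ is the $H^*$-action, which on the trivial $D(H)$-module is the restriction of the counit of $D(H)$, namely $\varepsilon_{H^*}\colon f\mapsto f(1)$.

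The only genuine point to verify---and what I expect to be the main (if modest) obstacle---is that these explicit linear maps really are morphisms in $D(H)\mathrm{-mod}$, i.e.\ that they intertwine the full coadjoint $D(H)$-action of~\eqref{eq:HV-coadj} and not merely the underlying vector-space structure. At the abstract level this is automatic, since the $d_n$ are $G$-module (hence $Z$-module) morphisms by construction and $Z\mathrm{-mod}\cong D(H)\mathrm{-mod}$ by Proposition~\ref{proposition:DZ-isomorphism}; nevertheless I would confirm directly that $\mu_{H^*}$ and $\beta$ commute with the action~\eqref{eq:HV-coadj}, which reduces to (co)associativity and the compatibility of $\mu_{H^*}$ with the coadjoint $H$-action already encoded in~\eqref{eq:drinfeldmultiplication}. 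Everything else is a matter of matching tensor positions and indices.
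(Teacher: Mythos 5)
Your proposal is correct and follows essentially the same route as the paper: the Corollary is stated there without a separate proof precisely because it is the direct specialization of the bar resolution differential~\eqref{eq:barresolutiondelta} using $G^m(X)=(H^{*\otimes m}\otimes X)_{\mathrm{coad}}$ from~\eqref{eq:Gn-Hopf}, the identification $\varepsilon_{G^i(X)}=\mu_{H^*}\otimes\id_{H^*}^{\otimes(i-1)}\otimes\id_X$ (resp.\ $\varepsilon_X=\beta$) from~\eqref{G-eps-def}, the fact that $Z$ (hence $G$) acts on morphisms by $\id_{H^*}\otimes(-)$ from Proposition~\ref{proposition:enocomplex}, and the transport to $D(H)\mathrm{-mod}$ via Proposition~\ref{proposition:DZ-isomorphism} --- exactly the steps you carry out, including your (correct) observation that the $d_n$ being morphisms of $D(H)$-modules is automatic from the isomorphism of categories.
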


\newcommand{\Ind}{\mathrm{Ind}}
\subsection{$G$-projective modules as induced modules}\label{ssec:G-proj-Hopf}
By Theorem~\ref{theorem:Main}, we can compute  Davydov--Yetter cohomologies using the bar resolution~\eqref{eq:G-DH-cpx}, or any other $G$-resolution. 
The $G$-resolutions  are made of $G$-projective modules -- a certain class of modules over $D(H)$. 
Here, we discuss what $G$-projectivity means in the case of Hopf algebras.
Due to Proposition~\ref{proposition:DZ-isomorphism}, we will often identify objects from $Z\mathrm{-mod}$ with those from $D(H)\mathrm{-mod}$.
We have thus to describe $G$-projective objects in terms of $D(H)$-modules.

We have the canonical embedding of Hopf algebras $H\to D(H)$, and have thus the induction functor
\be\label{eq:Ind-V}
\Ind\colon\; V\, \mapsto\, \Ind_H^{D(H)} V := D(H)\otimes_H V\ .
\ee
We note that as the vector space $\Ind(V)$ is $H^*\otimes_k V$: indeed, $D(H)$ is $H^*\otimes_k H$ as a vector space and thus the $H$ tensorand  goes through the balanced tensor product over $H$ in~\eqref{eq:Ind-V} and acts on $V$. The image of this action is $V$ of course.  
We then recall that the $D(H)$ action on $\Ind(V)$ is defined via multiplication:
\be
\rho_{\Ind(V)}\colon \; D(H)\otimes  D(H)\otimes_H V \xrightarrow{\,\mu_{D(H)}\otimes{\id_V}\,} D(H)\otimes_H V.
\ee
Let $\psi\otimes v\in H^*\otimes_k V$ and $\phi\in H^*$, then the $H^*$-action on $\Ind(V)=H^*\otimes_k V$ 
is given just by multiplication on the left:
\be\label{eq:Hstar-act}
\phi . (\psi\otimes v) = (\phi * \psi)\otimes v\ ,
\ee
while the $H$-action on $\Ind(V)$ is (recall the multiplication in~\eqref{eq:drinfeldmultiplication})
\be
h . (\psi\otimes v) = (h \cdot \psi)\otimes v\  = \psi\bigl(S(h_{(1)})? h_{(3)}\bigr)\otimes h_{(2)}.v.
\ee
Compare this $D(H)$ action with the action in the tuple $\mathcal{F}(V)=(Z(V),\mu_V)$, where $\mathcal{F}$ denotes the free functor associated to the monad $Z$ defined in Proposition~\ref{proposition:enocomplex}.
We obtain the following:

\begin{Proposition}\label{prop:FreeVsInd}
	 $\Ind(V)$ and $\mathcal{F}(V)$ are isomorphic as $D(H)$-modules.
\end{Proposition}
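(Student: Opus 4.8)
The plan is to construct an explicit isomorphism of $D(H)$-modules between $\Ind(V)=D(H)\otimes_H V$ and $G(V)$. Both objects have the same underlying vector space $H^*\otimes_k V$: for $\Ind(V)$ this was already observed above the statement (the $H$-tensorand of $D(H)=H^*\otimes_k H$ passes through the balanced tensor product and acts on $V$, leaving $H^*\otimes_k V$), and for $G(V)$ this is the description $(H^*\otimes V)_{\mathrm{coad}}$ from Proposition~\ref{proposition:enocomplex} together with~\eqref{eq:Gn-Hopf} at $n=1$. So the natural candidate for the isomorphism is simply the identity map on $H^*\otimes_k V$, and the whole content reduces to checking that it intertwines the two $D(H)$-actions.

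First I would write down both $D(H)$-actions on $H^*\otimes_k V$ explicitly. For $\Ind(V)$, the action of the subalgebra $H^*$ is left $*$-multiplication, as in~\eqref{eq:Hstar-act}, and the action of $h\in H$ is $h.(\psi\otimes v)=\psi\bigl(S(h_{(1)})?\,h_{(3)}\bigr)\otimes h_{(2)}.v$, coming from the cross-relations~\eqref{eq:drinfeldmultiplication}. For $G(V)$, recall from Proposition~\ref{proposition:DZ-isomorphism} that the $D(H)$-structure on a $Z$-module is given by letting $H^*\subset D(H)$ act through the $Z$-module structure map, which on the free module $G(V)=\F\circ\U(V)=(Z(V),\mu_V)$ is $\mu_V$ itself; by~\eqref{eq:mu-Z} this is exactly $\phi.(\psi\otimes v)=(\phi*\psi)\otimes v$. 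The underlying $H$-action on $(H^*\otimes V)_{\mathrm{coad}}$ is the coadjoint one from Proposition~\ref{proposition:enocomplex}, namely $h.(\psi\otimes v)=\psi\bigl(S(h_{(1)})?\,h_{(3)}\bigr)\otimes h_{(2)}.v$.

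Then I would simply match the two descriptions: the $H^*$-actions agree since both are $\phi*\psi$ on the first tensorand, and the $H$-actions agree since both are the coadjoint action given by the identical Sweedler expression. Since $H^*$ and $H$ generate $D(H)$ as an algebra, agreement on these two subalgebras forces the identity map to be a $D(H)$-module homomorphism, and being a bijection it is an isomorphism. The only mild subtlety worth spelling out is that the identification of the underlying vector space $\Ind(V)\cong H^*\otimes_k V$ must be compatible with how $H^*$ sits inside $D(H)$, i.e.\ that the generator $\psi\otimes v\in\Ind(V)$ corresponds to $\psi\otimes v\in G(V)$ under $\psi\otimes v=(\psi\otimes 1)\otimes_H v$; once this bookkeeping is fixed, everything matches on the nose.

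The main obstacle, such as it is, is purely notational rather than conceptual: one must be careful that the convention for the Drinfeld double multiplication in~\eqref{eq:drinfeldmultiplication} and the convention for the coadjoint action in~\eqref{eq:coad} produce \emph{literally} the same Sweedler expression, so that no antipode or factor is transposed when comparing $h.\psi$ in $D(H)$ with the coadjoint $H$-action on $Z(V)$. I expect this to be immediate given that~\eqref{eq:drinfeldmultiplication} was chosen precisely so that the $H$-action on $\Ind(V)$ reproduces the coadjoint action, but it is the one place where a sign or antipode mismatch could hide, so I would verify it directly before concluding.
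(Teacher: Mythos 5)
Your proposal is correct and follows essentially the same route as the paper: the paper's proof is exactly the comparison you describe, identifying both $\Ind(V)$ and $G(V)$ with the vector space $H^*\otimes_k V$, observing that the $H^*$-action is left $*$-multiplication in both cases (via~\eqref{eq:Hstar-act} on one side and $\mu_V$ from~\eqref{eq:mu-Z} together with Proposition~\ref{proposition:DZ-isomorphism} on the other), and that the $H$-action computed from the cross-relation~\eqref{eq:drinfeldmultiplication} coincides literally with the coadjoint action in~\eqref{eq:Gn-Hopf} at $n=1$. Your concluding convention check is exactly the point the paper relies on (its double is set up so that $h\cdot\psi$ reproduces the coadjoint Sweedler expression), so nothing is missing.
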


And we get the following corollary (recall also Corollary~\ref{cor:G-proj}):

\begin{Cor}\label{cor:DH-G-proj} A $D(H)$-module is $G$-projective if and only if it is a direct summand of the induced module $\Ind(V)$ for some $V\in\Hmod$.
\end{Cor}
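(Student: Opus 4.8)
The plan is to deduce the statement directly from the intrinsic characterization of $G$-projectivity in Corollary~\ref{cor:G-proj}, together with the identification $\Ind(V)\cong G(V)$ just established and the isomorphism $Z\mathrm{-mod}\cong D(H)\mathrm{-mod}$ of Proposition~\ref{proposition:DZ-isomorphism}. I would work throughout inside $Z\mathrm{-mod}$ and transport everything to $D(H)\mathrm{-mod}$ along that isomorphism, so that applying $G$ to an object with underlying $H$-module $V$ produces the $D(H)$-module $\Ind(V)$.

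For the ``only if'' direction, I would start from a $G$-projective $D(H)$-module $M$. By Corollary~\ref{cor:G-proj} it is a retract of $G(\mathsf{Y})$ for some $\mathsf{Y}\in Z\mathrm{-mod}$; setting $V:=\U(\mathsf{Y})\in\Hmod$, the preceding Proposition gives $G(\mathsf{Y})\cong\Ind(V)$ as $D(H)$-modules, and hence $M$ is a direct summand of $\Ind(V)$.

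For the ``if'' direction, I would take $M$ to be a direct summand of $\Ind(V)$ for some $V\in\Hmod$ and reduce, via Lemma~\ref{lemma:directsummands}(2), to showing that every free object $\F(V)\cong\Ind(V)$ is $G$-projective. To prove this last point I would exhibit $\F(V)$ as a retract of $G(\F(V))=\F\U\F(V)$: the unit $\eta_V\colon V\to\U\F(V)$ of the adjunction $\F\dashv\U$ induces $\F(\eta_V)\colon\F(V)\to G(\F(V))$, and the triangle identity $\varepsilon_{\F(V)}\circ\F(\eta_V)=\id_{\F(V)}$ shows it splits the counit. By Corollary~\ref{cor:G-proj} (equivalently Definition~\ref{definition:G-projective}) this makes $\F(V)$, and therefore its summand $M$, $G$-projective.

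The one place deserving care is that the ``only if'' direction only yields modules $V=\U(\mathsf{Y})$ underlying some object of $Z\mathrm{-mod}$, whereas the statement ranges over all $V\in\Hmod$. This asymmetry is exactly what the free-object argument settles: $\Ind(V)\cong\F(V)$ is $G$-projective for every $V\in\Hmod$, whether or not $V$ itself admits a $D(H)$-module structure. Beyond this observation I expect only the formal bookkeeping of retracts, so the corollary should indeed be immediate.
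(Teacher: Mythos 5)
Your proposal is correct and follows essentially the same route as the paper: the paper also obtains the corollary by combining Corollary~\ref{cor:G-proj} with the preceding identification $\Ind(V)\cong G(V)$, where $G(V)$ is the paper's abuse of notation for the free module $\F(V)$. The only difference is that you make explicit the one point the paper leaves implicit in that abuse-of-notation footnote, namely that $\F(V)\cong\Ind(V)$ is $G$-projective for \emph{every} $V\in\Hmod$ (via the triangle identity $\varepsilon_{\F(V)}\circ\F(\eta_V)=\id_{\F(V)}$) and not only for those $V$ underlying an object of $Z\mathrm{-mod}$ --- a worthwhile clarification, but not a different proof.
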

\begin{proof}
	We need to show that $\mathcal{F}(V)$ is $G$-projective.
	We show that $\mathcal{F}(V)$ is a retract of $G(\mathcal{F}(V))$.
	Notice that $G(\mathcal{F}(V))=\mathcal{F}\circ \mathcal{U}\circ\mathcal{F}(V)$.
	We define 
	\begin{equation}
	s=\mathcal{F}(\eta_V):\mathcal{F}(V)\to G(\mathcal{F}(V)),
	\end{equation}
		where $\eta$ is the unit of the monad $Z$.
	Notice that $G(\mathcal{F}(V))=(Z^2(V),\mu_{Z(V)})$ and $\mu_{V}\colon Z^2(V)\to Z(V) $ is a surjective morphism in $Z-\mod$.
	We find for the corresponding morphisms in $H-\mod$:
	\begin{equation}
	\mu_{V}\circ s=\mu_{V}\circ \mathcal{F}(\eta_V)=\mu_{V}\circ Z(\eta_V)=\id_{\mathcal{F}(V)}.
	\end{equation}
	Therefore, by Proposition~\ref{prop:FreeVsInd}, $\Ind(V)$ is isomorphic to a direct summand of a $G$-projective object and, by Corollary~\ref{cor:G-proj}, it is $G$-projective and so are all its direct summands.
	The converse direction is obvious.
\end{proof}

Recall that we have the canonical embedding of algebras $H^*\to D(H)$.
We then note from~\eqref{eq:Hstar-act} that the  $H^*$-module $\Ind(V)|_{H^*}$ 
  is isomorphic to the direct sum $(H^*)^{\oplus \dim(V)}$, where $H^*$ is   the regular representation space of $H^*$. We thus conclude with the following corollary:

\begin{Cor} \label{cor:G-proj-H}
$G$-projective modules are projective as $H^*$-modules. 
\end{Cor}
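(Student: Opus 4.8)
The plan is to combine Corollary~\ref{cor:DH-G-proj} with the explicit description of the restriction of $\Ind(V)$ to $H^*$ obtained just above. First I would take an arbitrary $G$-projective $D(H)$-module $M$. By Corollary~\ref{cor:DH-G-proj}, such an $M$ is a direct summand of the induced module $\Ind(V)$ for some $V\in\Hmod$; that is, there is a $D(H)$-module $N$ with $M\oplus N\cong \Ind(V)$ as $D(H)$-modules.

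Next I would restrict this isomorphism along the canonical algebra embedding $H^*\to D(H)$. Since restriction of modules is an additive functor, it commutes with finite direct sums, and hence $M|_{H^*}\oplus N|_{H^*}\cong \Ind(V)|_{H^*}$ as $H^*$-modules. By the observation following~\eqref{eq:Hstar-act}, the right-hand side is isomorphic to $(H^*)^{\oplus \dim(V)}$, the free $H^*$-module on $\dim(V)$ generators, which is in particular projective.

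Finally, I would invoke the standard fact that a direct summand of a projective module is projective: since $M|_{H^*}$ is a direct summand of the projective $H^*$-module $\Ind(V)|_{H^*}$, it is itself projective as an $H^*$-module, which is the claim.

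I do not expect any essential obstacle here, since all the substantive Hopf-algebraic computation has already been carried out, namely the identifications $\Ind(V)\cong G(V)$ and $\Ind(V)|_{H^*}\cong (H^*)^{\oplus\dim(V)}$. The only points requiring (entirely routine) care are that restriction along $H^*\to D(H)$ preserves direct sums and that projectivity passes to direct summands; everything else follows immediately from Corollary~\ref{cor:DH-G-proj}.
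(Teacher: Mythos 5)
Your proposal is correct and is essentially the paper's own argument: the paper derives this corollary directly from Corollary~\ref{cor:DH-G-proj} together with the observation, made just before the statement, that $\Ind(V)|_{H^*}\cong (H^*)^{\oplus\dim(V)}$ is free, so that a $G$-projective module restricts to a direct summand of a free (hence projective) $H^*$-module. The routine steps you spell out (restriction preserving direct sums, summands of projectives being projective) are exactly what the paper leaves implicit.
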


We note that $G$-projective modules are not necessarily projective as $H$-modules. An important class of such  $G$-projective modules appears in our example section~\ref{ssec:example} in constructing $G$-resolutions: as $H$-modules they are direct sums of one-dimensional modules, and in particular, they are non-projective as $D(H)$-modules.

\subsection{Cohomology of the forgetful functor}\label{ssec:forgetful}

The representation category $\Hmod$ comes with a canonical fiber functor: the forgetful functor 
\begin{equation}
\mathcal{U}_H\colon H\mathrm{-mod}\to\mathrm{Vec}_k.
\end{equation}
It is well known that the Davydov-Yetter cohomology of the forgetful functor is isomorphic to the Hochschild cohomology of the algebra $(H^*,*)$ with the trivial bimodule coefficient (see e.g. \cite[Prop.~7.4]{ENO}).	
In this subsection, we reformulate Hochschild cohomology of $(H^*,*)$ in a different direction: 
It is isomorphic to Davydov-Yetter cohomology of the identity functor with a non-trivial  coefficient. 
The following diagram displays the relations between complexes made precise in this section:
 \begin{equation*}
 \begin{tikzpicture}
 \matrix (m) [matrix of math nodes,row sep=3em,column sep=10em,minimum width=2em]
 {
 \footnotesize{\text{DY of the forgetful functor}}& 	\footnotesize{\text{Hochschild of } (H^*,*)} \\
 \footnotesize{\text{DY of } \id\; \text{with a  coefficient}} & 
\footnotesize{\text{Comonad of}\;\id\;\text{with a coefficient}}\\
 };
 \path[-stealth]
 
 (m-1-1)  edge node [above] {$\small{\cite[\text{Prop.~7.4}]{ENO}}$} (m-1-2)
 (m-1-2) edge node [right] {\small $\text{Theorem~\ref{cor:HochschildasCoefficients}}$} (m-2-2)
 (m-1-1) edge [dashed] node [left, sloped] {$$} (m-2-1)
 (m-2-2) edge node [below] {$\small{\text{Theorem~\ref{theorem:Main}}}$} (m-2-1)
 ;
 \end{tikzpicture}
 \end{equation*}
 where all arrows indicate isomorphisms of cochain complexes.
 We first explain what we mean by \textit{non-trivial coefficient}.
  For the $H$-module $H^*_{\mathrm{coreg}}$, we define the following map:
 	\begin{equation}\label{eq:beta}
 \beta_c\colon Z\left(H^*_{\mathrm{coreg}}\right)\to H^*_{\mathrm{coreg}},\;\;\; \beta_c (f\otimes g)(h):=f\left(S\left(h_{(1)}\right)h_{(3)}\right)g(h_{(2)})
 \end{equation}
  	for $f,g\in H^*$ and $h\in H$.
  	 \begin{Lemma}\label{lemma:beta}
  	 	The linear map $\beta_c$ from~\eqref{eq:beta} equips $H^*_{\mathrm{coreg}}$ with the structure of a $Z$-module.
  	 \end{Lemma}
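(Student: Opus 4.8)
The plan is to verify that the pair $(H^*_{\mathrm{coreg}},\beta_c)$ meets the three requirements making it an object of $Z\mathrm{-mod}$: that $\beta_c\colon Z(H^*_{\mathrm{coreg}})\to H^*_{\mathrm{coreg}}$ is a morphism in $\Hmod$, i.e.\ $H$-linear, and that the two $Z$-module axioms of~\eqref{eq:Tmod} hold for $T=Z$, namely the unit axiom $\beta_c\circ\eta=\id$ and the associativity axiom $\beta_c\circ Z(\beta_c)=\beta_c\circ\mu$. Since every space in sight is a subspace of $H^*$, each identity to be checked is an equality of linear functionals on $H$, which I would establish by evaluating on an arbitrary $h\in H$ and manipulating in Sweedler notation, using only the standard Hopf-algebra identities: $\Delta$ is an algebra map, $S$ is an algebra and coalgebra anti-homomorphism (so $S(ab)=S(b)S(a)$ and $\Delta(S(a))=S(a_{(2)})\otimes S(a_{(1)})$), $\varepsilon\circ S=\varepsilon$, and the counit and antipode axioms.

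The unit axiom is immediate: evaluating $\beta_c(\eta(f))=\beta_c(\varepsilon\otimes f)$ on $h$ gives $\varepsilon(S(h_{(1)})h_{(3)})\,f(h_{(2)})$, and using $\varepsilon\circ S=\varepsilon$ together with multiplicativity of $\varepsilon$ and the counit axiom this collapses to $f(h)$. For $H$-linearity I would compute both sides of $\beta_c(h.(f\otimes g))=h.\beta_c(f\otimes g)$ evaluated on $k\in H$. The left side, using the $Z$-action from Proposition~\ref{proposition:enocomplex} and the coregular action $h_{(2)}.g=g(?\,h_{(2)})$, expands to $f\bigl(S(h_{(1)})S(k_{(1)})k_{(3)}h_{(3)}\bigr)\,g(k_{(2)}h_{(2)})$; the right side, using only the coregular action $h.\beta_c(f\otimes g)=\beta_c(f\otimes g)(?\,h)$ and multiplicativity of $\Delta$, gives $f\bigl(S((kh)_{(1)})(kh)_{(3)}\bigr)\,g((kh)_{(2)})$. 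The two agree after applying $S(k_{(1)}h_{(1)})=S(h_{(1)})S(k_{(1)})$, so $\beta_c$ is $H$-linear.

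The main work, and the step I expect to be the genuine obstacle, is the associativity axiom: since $Z(\beta_c)=\id_{H^*}\otimes\beta_c$, this amounts to the identity $\beta_c\bigl(f\otimes\beta_c(g\otimes l)\bigr)=\beta_c\bigl((f*g)\otimes l\bigr)$, with $*$ the product of~\eqref{equation:multiplication} and $\mu$ as in~\eqref{eq:mu-Z}. For the left-hand side I would substitute the inner $\beta_c(g\otimes l)$ and apply coassociativity to the five resulting legs of $h$, obtaining
\begin{equation*}
\beta_c\bigl(f\otimes\beta_c(g\otimes l)\bigr)(h)=f\bigl(S(h_{(1)})h_{(5)}\bigr)\,g\bigl(S(h_{(2)})h_{(4)}\bigr)\,l(h_{(3)}).
\end{equation*}
For the right-hand side I would expand $(f*g)\bigl(S(h_{(1)})h_{(3)}\bigr)$ via~\eqref{equation:multiplication}, which requires computing $\Delta\bigl(S(h_{(1)})h_{(3)}\bigr)$; the delicate point is that the coproduct of this product must be handled by combining multiplicativity of $\Delta$ with the coalgebra anti-homomorphism property $\Delta(S(a))=S(a_{(2)})\otimes S(a_{(1)})$. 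Tracking the Sweedler indices through coassociativity, the right-hand side reduces to the same expression $f\bigl(S(h_{(1)})h_{(5)}\bigr)\,g\bigl(S(h_{(2)})h_{(4)}\bigr)\,l(h_{(3)})$, completing the verification. The only real difficulty is the careful bookkeeping of how the antipode interacts simultaneously with the product and the coproduct in this middle computation; once the legs are matched correctly the two sides coincide termwise.
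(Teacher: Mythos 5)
Your proposal is correct and takes essentially the same route as the paper's own proof: you verify $H$-linearity, the unit axiom, and the associativity axiom of~\eqref{eq:Tmod} by direct evaluation on $h\in H$ in Sweedler notation, and your key computation --- expanding $(f*g)\bigl(S(h_{(1)})h_{(3)}\bigr)$ via $\Delta\bigl(S(h_{(1)})h_{(3)}\bigr)=S(h_{(2)})h_{(4)}\otimes S(h_{(1)})h_{(5)}$ and matching it against $f\bigl(S(h_{(1)})h_{(5)}\bigr)g\bigl(S(h_{(2)})h_{(4)}\bigr)l(h_{(3)})$ --- is exactly the paper's $\ddagger$ step.
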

  	 \begin{proof}
	 		We  first check that $\beta_c$ defines an $H$-module homomorphism. 
	 		For $a\in H$
  	 	\begin{align}
  	 	\beta_c(a.(f\otimes g))(h)&=f\left(S\left(a_{(1)}\right)S\left(h_{(1)}\right)h_{(3)}a_{(3)}\right)g\left(h_{(2)}a_{(2)}\right)\nonumber\\
  	 	&=f\left(S\left((ha)_{(1)}\right)(ha)_{(3)}\right)g\left((ha)_{(2)}\right)=a.\beta_c(f\otimes g)(h).
  	 	\end{align}
  	 	We then directly verify the axioms~\eqref{eq:Tmod} for a $Z$-action.  The right diagram in~\eqref{eq:Tmod} is
  	 	\begin{equation}
  	 	(\beta_c\circ\eta(f))(h)=\varepsilon\left(S\left(h_{(1)}\right)h_{(3)}\right)f(h_{(2)})=f\left(\varepsilon\left(h_{(1)}\right)h_{(2)}\varepsilon \left(h_{(3)}\right)\right)=f(h).
  	 	\end{equation}
    	 	We now check the left diagram of \eqref{eq:Tmod}  by calculating both directions in the diagram. For $p,q,f\in H^*$, we have
	 	\begin{align}
  	 	\beta_c\circ \mu_{H^*_{\mathrm{coreg}}}(p\otimes q\otimes f)=&\beta_c(\mu_I(p\otimes q)\otimes f)(h)\nonumber\\
  	 	=& (p* q)\left(S\left(h_{(1)}\right)h_{(3)}\right)f\left(h_{(2)}\right)\nonumber\\
  	 	\stackrel{\ddagger}{=}& p\left(S\left(h_{(1)}\right)h_{(5)}\right)q\left(S\left(h_{(2)}\right)h_{(4)}\right)f\left(h_{(3)}\right)
  	 	\end{align}
  	 	where $\ddagger$ follows from
  	 	\begin{align}
  	 	\Delta\otimes \id\left(S\left(h_{(1)}\right)h_{(3)}\otimes h_{(2)}\right)=& S\left(h_{(1)}\right)_{(1)}h_{(3)(1)}\otimes S(h_{(1)})_{(2)}h_{(3)(2)}\otimes h_{(2)}\nonumber\\
  	 	=& S\left(h_{(2)}\right)h_{(4)}\otimes S\left(h_{(1)}\right)h_{(5)}\otimes h_{(3)}.
  	 	\end{align}
  	 	The other direction is
  	 	\begin{align}
  	 	\beta_c\circ Z(\beta_c)(p\otimes q\otimes f)(h)=&\beta_c(p\otimes \beta_c(q\otimes f))(h)\nonumber\\
  	 	=& p\left(S(h_{(1)})h_{(3)}\right)\beta_c(q\otimes f)\left(h_{(2)}\right)\nonumber\\
  	 	=& p\left(S\left(h_{(1)}\right)h_{(5)}\right)q\left(S\left(h_{(2)}\right)h_{(4)}\right)f\left(h_{(3)}\right).
  	 	\end{align}
  	 	As both directions coincide the diagram commutes. 
		  	 	This completes the proof.
  	 \end{proof}
 \begin{Theorem}\label{cor:HochschildasCoefficients}
 	The Hochschild cochain complex $C^{\bullet}_{\mathrm{HH}}(H^*,k)$ is isomorphic to the comonad complex $C^{\bullet}\left((I,\alpha),\Hom_{\mathcal{Z}(\Hmod)}(?,(H^*_{\mathrm{coreg}},\beta_c))\right)_G$.
 \end{Theorem}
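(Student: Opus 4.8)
The plan is to construct the isomorphism directly at the level of cochain complexes. First I would unwind the comonad complex on the right-hand side. By Theorem~\ref{theorem:Main} combined with Corollary~\ref{cor:DYH-mod}, applied with the trivial coefficient $(I,\alpha)$ (whose underlying $H$-module is the one-dimensional $k$ with action $\varepsilon$) in the first slot and $(H^*_{\mathrm{coreg}},\beta_c)$ in the second, the complex $C^{\bullet}((I,\alpha),\Hom_{\mathcal{Z}(\Hmod)}(?,(H^*_{\mathrm{coreg}},\beta_c)))_G$ is isomorphic to the complex with cochain spaces $\Hom_H((H^{*\otimes n})_{\mathrm{coad}},H^*_{\mathrm{coreg}})$ and the differential written out in Corollary~\ref{cor:DYH-mod}, where the $H^*$-action on the coefficient is $a_0.g=\beta_c(a_0\otimes g)$ and the $H^*$-action on $X=k$ is $a_n.1=\varepsilon_{H^*}(a_n)$. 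I note that the signs of that differential (leading $+$, then $(-1)^i$, trailing $(-1)^{n+1}$) already coincide with the standard Hochschild signs, the conventions being matched as in Remark~\ref{remark:signisomorphis}. The goal is then to identify this complex with the Hochschild complex $C^n_{\mathrm{HH}}(H^*,k)=\Hom_k(H^{*\otimes n},k)$ of the algebra $(H^*,*)$ with trivial bimodule coefficients.

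The key cochain-level isomorphism is Frobenius reciprocity for the coregular module. Since $H^*_{\mathrm{coreg}}$ is the coinduced module $\mathrm{Coind}^H_k(k)=\Hom_k(H,k)$ with action $(h.\phi)(x)=\phi(xh)$, for every $M\in\Hmod$ evaluation at $1\in H$ gives a natural isomorphism
\[
\Phi\colon \Hom_H(M,H^*_{\mathrm{coreg}})\xrightarrow{\;\cong\;} M^*,\qquad \Phi(\phi)(m):=\phi(m)(1),
\]
with inverse sending $\psi\in M^*$ to the map $m\mapsto(h\mapsto\psi(h.m))$; one checks directly that the latter is $H$-linear and that the two assignments are mutually inverse. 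Applied to $M=(H^{*\otimes n})_{\mathrm{coad}}$, whose underlying vector space is $H^{*\otimes n}$, this yields
\[
\Hom_H\bigl((H^{*\otimes n})_{\mathrm{coad}},H^*_{\mathrm{coreg}}\bigr)\;\cong\;(H^{*\otimes n})^{*}=\Hom_k(H^{*\otimes n},k)=C^n_{\mathrm{HH}}(H^*,k),
\]
so the cochain spaces agree in each degree.

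It then remains to verify that $\Phi$ intertwines the two differentials, which is the main computational step. Writing $\hat f:=\Phi(f)$ and evaluating the differential of Corollary~\ref{cor:DYH-mod} at $1\in H$, I would use $\Delta(1)=1\otimes 1$, $S(1)=1$ and $\varepsilon_{H^*}(a)=a(1)$ to treat the three groups of terms. The middle terms involve the convolution products $a_{i-1}*a_i$ and pass through $\Phi$ verbatim, reproducing the inner face maps of the Hochschild differential of $(H^*,*)$. The trailing term uses the trivial $H^*$-action on $X=k$, producing the scalar $\varepsilon_{H^*}(a_n)=a_n(1)$ and hence the right $k$-bimodule action. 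The essential point is the leading term: evaluating $\beta_c(a_0\otimes f(a_1\otimes\cdots\otimes a_n))$ at $h=1$ via the explicit formula \eqref{eq:beta} collapses the coadjoint expression $a_0(S(h_{(1)})h_{(3)})$ to $a_0(1)=\varepsilon_{H^*}(a_0)$, so this term reproduces exactly the left $k$-bimodule action. Hence $\Phi$ is a cochain map, and being bijective in each degree, an isomorphism of complexes.

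The main obstacle is precisely this leading-term identification: it is where the specific $Z$-module structure $\beta_c$ of Lemma~\ref{lemma:beta} is used, since $\beta_c$ is tailored so that its value on the unit $1\in H$ degenerates to the counit of $H^*$. A generic half-braiding on $H^*_{\mathrm{coreg}}$ would fail to make $\Phi$ a cochain map, so the content of the theorem is that $\beta_c$ is exactly the coefficient turning the Davydov--Yetter/comonad differential of the identity functor into the Hochschild differential of $(H^*,*)$ with trivial coefficients.
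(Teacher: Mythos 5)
Your proposal is correct, and it rests on the same two pillars as the paper's own proof: the coinduction (Frobenius reciprocity) isomorphism $\Hom_{H}\bigl(M,H^*_{\mathrm{coreg}}\bigr)\cong \Hom_k(M,k)$ given by evaluation at $1\in H$, which is exactly Lemma~\ref{lemma:forgetfulrightadjoint}, and the identification of the comonad complex with the explicit Hopf-algebraic complex on $\Hom_H\bigl((H^{*\otimes n})_{\mathrm{coad}},H^*_{\mathrm{coreg}}\bigr)$ coming from Proposition~\ref{proposition:MonadComonad}, Corollary~\ref{cor:DYH-mod} and the sign bookkeeping of Remark~\ref{remark:signisomorphis}. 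Where you genuinely differ is the direction in which the intertwining of differentials is checked. The paper (Lemma~\ref{proposition:HochschildComonad1}) pulls the Hochschild differential back along the inverse map $g\mapsto\tilde g$, which forces an evaluation of $\delta(\bar g)$ on $h.(a_0\otimes\dots\otimes a_n)$ for \emph{arbitrary} $h\in H$; this is where the paper's main computation lives (the step $\dagger$ and~\eqref{eq:arg-d1}, which use that the antipode is a coalgebra anti-homomorphism to show the coadjoint action is compatible with the convolution product). You instead push the comonad differential forward along $\Phi$, i.e.\ you only ever evaluate at $h=1$, where $\Delta(1)=1\otimes 1$ and $S(1)=1$ collapse all coadjoint twisting --- in particular $\bigl[\beta_c(a_0\otimes q)\bigr](1)=a_0(1)\,q(1)$ by~\eqref{eq:beta} --- so each of the three groups of terms matches the corresponding Hochschild face map essentially by inspection. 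Since $\Phi$ is bijective in each degree, the two verifications are logically equivalent, so your shortcut is legitimate and noticeably shorter; what it gives up is the explicit intermediate formula~\eqref{eq:deltap}, which the paper isolates as a standalone reformulation of the Hochschild complex before matching it against the abstract monad form. Two small things you should still record explicitly: the deferred routine checks (that $\psi\mapsto\bigl(m\mapsto(h\mapsto\psi(h.m))\bigr)$ lands in $H$-linear maps and is inverse to $\Phi$) are precisely the content of Lemma~\ref{lemma:forgetfulrightadjoint}, and the trailing term requires the observation that the $Z$-action on the unit coefficient is $\alpha(a)=a(1)=\varepsilon_{H^*}(a)$, so that it reproduces the right trivial-bimodule action --- both of which you state, so the argument is complete.
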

As an immediate corollary of this theorem, using Theorem~\ref{theorem:Main} we get that the Davydov-Yetter complex of the forgetful functor $C^{\bullet}_{DY}(\mathcal{U}_H)$ is isomorphic to the Davydov-Yetter complex 
 of the identity functor with a non-trivial coefficient: $C^{\bullet}_{DY}(\id, \mathsf{I},\mathsf{H})$, where $\mathsf{H}=(H^*_{\mathrm{coreg}},\rho_c)$ and $\rho_c$ denotes the image of $\beta_c$ under the isomorphism explained in~\eqref{eq:iso-braiding}.
 
Before proving Theorem~\ref{cor:HochschildasCoefficients}, we first prove the following two lemmas.
\begin{Lemma}\label{lemma:forgetfulrightadjoint}
The forgetful functor $\mathcal{U}_H\colon\Hmod\to \mathrm{Vec}_k$ has a right adjoint from $\mathrm{Vec}_k$ to $\Hmod$, with action $V\mapsto H^*_{\mathrm{coreg}} \otimes {_\varepsilon V}$. In particular, there is a natural family of isomorphisms
\be\label{eq:isoU}
\Hom_{H}\left(X,H^*_{\mathrm{coreg}}\right)
\xrightarrow{\;\cong\;}
\Hom_k\left(\mathcal{U}_H(X),k\right),\qquad
 f\mapsto \bar{f}:=f(?)(1),
\ee
for $X\in \Hmod$.
\end{Lemma}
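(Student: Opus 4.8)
The plan is to realize the functor $R\colon \mathrm{Vec}_k\to\Hmod$, $R(V)=H^*_{\mathrm{coreg}}\otimes{_\varepsilon V}$, as the \emph{coinduction} functor associated to $\mathcal{U}_H$, and then to read off the stated isomorphism as the special case $V=k$ of the coinduction adjunction. First I would invoke the classical fact that for any $k$-algebra $A$ the forgetful functor $A\mathrm{-mod}\to\mathrm{Vec}_k$ has right adjoint $V\mapsto \Hom_k(A,V)$, with $A$-action $(a.\phi)(b):=\phi(ba)$. Since $H$ is finite dimensional, evaluation gives a natural isomorphism $\Hom_k(H,V)\cong H^*\otimes_k V$ under which $f\otimes v$ corresponds to $x\mapsto f(x)\,v$; the module action then reads $\bigl(h.(f\otimes v)\bigr)(x)=f(xh)\,v$.

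Next I would check that this is exactly the action on $H^*_{\mathrm{coreg}}\otimes{_\varepsilon V}$. By definition the coregular action is $(h.f)(x)=f(xh)$, so the diagonal action on the tensor product is $h.(f\otimes v)=(h_{(1)}.f)\otimes\varepsilon(h_{(2)})\,v$, which collapses to $(h.f)\otimes v$ by the counit axiom $h_{(1)}\varepsilon(h_{(2)})=h$. This matches the action on $\Hom_k(H,V)$ computed above, so $R(V)=H^*_{\mathrm{coreg}}\otimes{_\varepsilon V}$ is indeed the coinduction module and the adjunction $\mathcal{U}_H\dashv R$ holds. To make the natural bijection explicit I would use that the adjunction counit evaluates at $1\in H$, giving for $X\in\Hmod$
\begin{equation*}
\Hom_H\bigl(X,\Hom_k(H,V)\bigr)\xrightarrow{\;\cong\;}\Hom_k\bigl(\mathcal{U}_H(X),V\bigr),\qquad \psi\mapsto\bigl(x\mapsto\psi(x)(1)\bigr),
\end{equation*}
with inverse $g\mapsto\bigl(x\mapsto(h\mapsto g(h.x))\bigr)$. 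These are mutually inverse, $H$-linear, and natural in $X$ by a direct check; the only slightly nontrivial identity is $\bigl(h.\psi(x)\bigr)(1)=\psi(x)(h)$, which is precisely the coregular action evaluated at $1$. Specializing to $V=k$ yields $R(k)=H^*_{\mathrm{coreg}}\otimes{_\varepsilon k}=H^*_{\mathrm{coreg}}$ and the isomorphism $f\mapsto\bar f=f(?)(1)$, as claimed.

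All computations here are routine; the only point requiring care is bookkeeping the two conventions at play — the coregular action $h.f=f(?h)$ and the trivial action on ${_\varepsilon V}$ — and confirming that their diagonal combination reproduces the coinduction module structure. I expect no genuine obstacle beyond this, and in particular the naturality in $X$ is immediate from the explicit formulas once $H$-linearity of the two maps is verified.
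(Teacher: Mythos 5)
Your proposal is correct, and the explicit maps you end with ($\psi\mapsto\psi(?)(1)$ with inverse $g\mapsto\bigl(x\mapsto(h\mapsto g(h.x))\bigr)$) coincide exactly with those in the paper. The route is nevertheless genuinely different in emphasis: the paper's proof is a bare-hands verification of the displayed isomorphism \eqref{eq:isoU} only, i.e.\ it writes down the inverse $g\mapsto\tilde g$, $\tilde g(x)(h):=g(h.x)$, and asserts that mutual inverseness, $H$-linearity and naturality in $X$ are straightforward to check, leaving the general adjunction statement implicit. You instead derive the full adjunction from the classical coinduction adjunction $\Hom_H\bigl(X,\Hom_k(H,V)\bigr)\cong\Hom_k\bigl(\mathcal{U}_H(X),V\bigr)$ for the underlying algebra $H$, and then identify the coinduced module $\Hom_k(H,V)$ with $H^*_{\mathrm{coreg}}\otimes{}_\varepsilon V$ using finite-dimensionality of $H$ together with the counit axiom $h_{(1)}\varepsilon(h_{(2)})=h$ to collapse the diagonal action. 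This buys two things: it actually proves the first sentence of the lemma (existence of the right adjoint with the stated action for arbitrary $V\in\mathrm{Vec}_k$, not just the specialization $V=k$), and it outsources the mutual-inverse and naturality checks to a standard general fact, so less must be verified by hand. The only extra cost is the bookkeeping step matching the Hopf-theoretic diagonal action on $H^*_{\mathrm{coreg}}\otimes{}_\varepsilon V$ with the transported coinduction action $\bigl(h.(f\otimes v)\bigr)(x)=f(xh)v$, which you carry out correctly; both arguments are sound, and yours is marginally more complete relative to what the lemma literally asserts.
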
 
\begin{proof}
It is straightforward to check that the inverse to the map in~\eqref{eq:isoU} is 
\be\label{eq:g-tilde}
g\mapsto \tilde{g},\qquad \tilde{g}(x)(h):=g(h.x),
\ee
 for $g\in \Hom_k(\mathcal{U}_H(X),k)$,  $h\in H$ and $x\in X$. 
Naturality in $X$ for the map~\eqref{eq:isoU} is easy to check.
 \end{proof}

With the identification of Corollary~\ref{cor:DYH-mod}, we can reformulate Davydov-Yetter complex of the forgetful functor on $\Hmod$ using Proposition~\ref{proposition:enocomplex}.
\begin{Lemma}\label{proposition:HochschildComonad1}
The Hochschild complex of the algebra $(H^*,*)$  with trivial coefficients is isomorphic to the complex with cochain groups $\Hom_{H}\left(\left(H^{*\otimes n}\right)_{\mathrm{coad}}, H^*_{\mathrm{coreg}}\right)$ and differential
\begin{align}\label{eq:deltap}
\delta'(g)(a_0\otimes\dots\otimes a_n)(h)= & a_0\left(S\left(h_{(1)}\right)h_{(3)}\right)g(a_1\otimes\dots\otimes a_n)(h_{(2)})\nonumber\\
& +\sum_{i=1}^n (-1)^i g(a_0\otimes\ldots\otimes a_{i-1}*a_i\otimes\dots\otimes a_n)(h)+\nonumber\\
& (-1)^{n+1}g(a_0\otimes\dots\otimes a_{n-1})(h)a_n(1),
\end{align}
where $g\in \Hom_{H}\left((H^{*\otimes n})_{\mathrm{coad}}, H^*_{\mathrm{coreg}}\right)$ and $h\in H$ and $a_i\in H^*$ for $0\leq i\leq n$.
\end{Lemma}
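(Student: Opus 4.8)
The plan is to build, degree by degree, an isomorphism of cochain complexes out of the adjunction isomorphism of Lemma~\ref{lemma:forgetfulrightadjoint}, and then to verify that it intertwines the two differentials via the single observation that everything collapses after evaluating at the unit $1\in H$. First I would write the source complex out concretely. Taking $k$ to be the trivial $(H^*,*)$-bimodule, with left and right actions both given by the counit $\varepsilon_{H^*}\colon f\mapsto f(1)$, the Hochschild complex $C^{\bullet}_{\mathrm{HH}}(H^*,k)$ has cochain groups $\Hom_k\bigl((H^*)^{\otimes n},k\bigr)$ and the standard differential
\begin{align*}
(d_{\mathrm{HH}}\phi)(a_0\otimes\dots\otimes a_n)
&=a_0(1)\,\phi(a_1\otimes\dots\otimes a_n)
+\sum_{i=1}^n(-1)^i\phi(a_0\otimes\dots\otimes a_{i-1}*a_i\otimes\dots\otimes a_n)\\
&\quad+(-1)^{n+1}a_n(1)\,\phi(a_0\otimes\dots\otimes a_{n-1}).
\end{align*}

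Next I would observe that the target is already known to be a genuine complex, so no well-definedness has to be checked by hand. By Lemma~\ref{lemma:beta} the pair $(H^*_{\mathrm{coreg}},\beta_c)$ is a $Z$-module, hence a $D(H)$-module via Proposition~\ref{proposition:DZ-isomorphism}, and unwinding Corollary~\ref{cor:DYH-mod} with coefficients $\mathsf{X}=\mathsf{I}$ and $\mathsf{Y}=(H^*_{\mathrm{coreg}},\beta_c)$ — so that $a_0.(-)=\beta_c(a_0\otimes-)$ and $a_n.x=a_n(1)x$ on the trivial coefficient — reproduces verbatim the differential $\delta'$ of the statement. In particular each $\delta'(g)$ is automatically an $H$-module homomorphism into $H^*_{\mathrm{coreg}}$ and $\delta'\circ\delta'=0$.

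It then remains to produce the isomorphism of cochain groups and check it is a cochain map. Applying Lemma~\ref{lemma:forgetfulrightadjoint} to the $H$-module $X=(H^{*\otimes n})_{\mathrm{coad}}$, whose underlying vector space is $(H^*)^{\otimes n}$, yields natural isomorphisms
\begin{equation*}
\Phi_n\colon\;\Hom_H\bigl((H^{*\otimes n})_{\mathrm{coad}},H^*_{\mathrm{coreg}}\bigr)\xrightarrow{\;\cong\;}\Hom_k\bigl((H^*)^{\otimes n},k\bigr),\qquad g\mapsto\bar g:=g(?)(1).
\end{equation*}
What is left is the identity $\overline{\delta'(g)}=d_{\mathrm{HH}}(\bar g)$, and this is exactly where evaluation at $h=1$ does the work: since $\Delta(1)=1\otimes1$ and $S(1)=1$, the coadjoint prefactor $a_0\bigl(S(h_{(1)})h_{(3)}\bigr)$ in the first term of $\delta'$ collapses to $a_0(1)$, the internal evaluations $g(\dots)(h_{(2)})$ and $g(\dots)(h)$ become $\bar g(\dots)$, and the last term contributes the factor $a_n(1)$; matching the three groups of terms against the displayed $d_{\mathrm{HH}}$ gives the claim, so $(\Phi_n)_n$ is the desired isomorphism of complexes.

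The only genuinely delicate point, which I would write out carefully, is the bookkeeping in this final step: one must check that the $\beta_c$-twisted first term of $\delta'$ — which a priori distributes the comultiplication over $h_{(1)},h_{(2)},h_{(3)}$ and applies the antipode — really does degenerate to the plain left counit action $a_0(1)$ of the trivial bimodule once $h$ is set to $1$, and symmetrically that the counit evaluation $a_n(1)$ in the last term matches the right action. The intermediate convolution terms are manifestly identical on the two sides, so once these two boundary terms are matched the argument is complete.
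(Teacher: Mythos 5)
Your proof is correct, but it takes a genuinely different route from the paper's. The paper transports the Hochschild differential \emph{forward} along the adjunction isomorphism of Lemma~\ref{lemma:forgetfulrightadjoint}: it defines $\delta'(g):=\widetilde{\delta(\bar g)}$ and then shows by an explicit Hopf-algebraic computation (the equality marked $\dagger$ in the paper's proof, whose convolution terms require that the antipode is a coalgebra anti-homomorphism, see the calculation ending in~\eqref{eq:arg-d1}, and whose last term uses the antipode and counit axioms) that this transported differential equals the stated formula~\eqref{eq:deltap}; well-definedness of the target complex is then automatic, since it is the transport of a complex. You go the other way: you take~\eqref{eq:deltap} as the definition, observe that it is exactly the differential of Corollary~\ref{cor:DYH-mod} specialized to the coefficients $\mathsf{X}=\mathcal{I}$ and $\mathsf{Y}=(H^*_{\mathrm{coreg}},\beta_c)$ --- legitimate, since Lemma~\ref{lemma:beta} and Proposition~\ref{proposition:DZ-isomorphism} make $\mathsf{Y}$ a $D(H)$-module and both results precede this lemma, so there is no circularity --- whence $\delta'$ automatically lands in $H$-module maps and squares to zero, and you then verify the intertwining relation $\overline{\delta'(g)}=d_{\mathrm{HH}}(\bar g)$ in the direction where it is trivial: $\bar g$ is by definition $g(?)(1)$, and $\Delta(1)=1\otimes 1$, $S(1)=1$ collapse every coadjoint twist. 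Since the $\Phi_n$ are invertible and both differentials are known beforehand to be well defined, checking the intertwining in this single direction does suffice, so the logic is sound. The trade-off is clear: the paper's argument is self-contained and effectively \emph{derives} the formula~\eqref{eq:deltap} from the Hochschild differential, at the cost of the Hopf computation; your argument is essentially computation-free but relocates that same work (compatibility of the coadjoint action with the convolution product) into Corollary~\ref{cor:DYH-mod}, which had already absorbed it. Two small points you should still write out: the identification $(H^{*\otimes n}\otimes I)_{\mathrm{coad}}\cong(H^{*\otimes n})_{\mathrm{coad}}$ needs the counit axiom, and the $H^*$-action on $\mathcal{I}$ being $a\mapsto a(1)$ comes from the definition of $\alpha$ via Proposition~\ref{proposition:DZ-isomorphism}; both are one-line checks, and you did at least flag the latter.
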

\begin{proof}
Recall that the Hochschild complex is $\Hom_k(H^{*\otimes n},k)$ with a differential $\delta= \sum_{i=0}^{n+1}(-1)^i\delta_i$ that acts on cochains $f$ via
\begin{align*}
 \delta_0(f)(a_0\otimes\dots\otimes a_n)&=a_0(1)f(a_1\otimes\dots\otimes a_n)\\
 \delta_i(f)(a_0\otimes\dots\otimes a_n)&= f(a_0\otimes \dots\otimes (a_{i-1}*a_i)\otimes \dots\otimes a_n)\\
 \delta_{n+1}(f)(a_0\otimes\dots\otimes a_n)&=f(a_0\otimes\dots\otimes a_{n-1})a_{n}(1).
 \end{align*}
We directly transport this differential along the isomorphisms in Lemma~\ref{lemma:forgetfulrightadjoint}.
Let  $g\in \Hom_{H}\left((H^{*\otimes n})_{\mathrm{coad}}, H^*_{\mathrm{coreg}}\right)$ and we recall the definition of $\bar{?}$ and $\tilde{?}$ notations  from~\eqref{eq:isoU} and~\eqref{eq:g-tilde}, then
\begin{align}
\delta'(g)(a_0\otimes\dots\otimes a_n)(h):=
&\widetilde{\delta(\bar{g})}(a_0\otimes\dots\otimes a_n)(h)\nonumber\\
=&\delta(\bar{g})(h.(a_0\otimes\dots\otimes a_n))\nonumber\\
=&\delta(\bar{g})\left(a_0\left(S\left(h_{(1)}\right)?h_{(2n+2)}\right)\otimes\dots\otimes a_n\left(S(h_{(n+1)})?h_{(n+2)}\right)\right)\nonumber\\
\stackrel{\dagger}{=}& a_0\left(S\left(h_{(1)}\right)h_{(3)}\right)\bar{g}\left(h_{(2)}.(a_1\otimes\dots\otimes a_n)\right)\nonumber\\
&+\sum_{i=1}^{n} (-1)^i \bar{g}\left(h.(a_0\otimes\dots\otimes (a_{i-1}* a_{i})\otimes\dots\otimes a_n)\right)\label{eq:delta-delta}\\
&+(-1)^{n+1}\bar{g}\left(h.(a_0\otimes\dots\otimes a_{n-1})\right)a_n
\left(1\right) \nonumber
\end{align}
which equals the right hand side of~\eqref{eq:deltap}.
 We show the equality $\dagger$  for the  $\delta_i$  summands of~$\delta$ for $0\leq i\leq n+1$. 
For the first term it is straightforward, for the last term corresponding to $\delta_{n+1}$ we use the antipode and counit axioms. 
For the terms corresponding to $\delta_i$  for $1\leq i\leq n$, without loss of generality  we show it for $i=1$:  for all $b\in H$  the argument of $\bar{g}$ in $\delta_1$ is simplified as
\begin{align}
& a_0\left(S\left(h_{(1)}\right)?h_{(2n+2)}\right)*a_1\left(S\left(h_{(2)}\right)? h_{(2n+1)}\right)(b)\otimes \dots\otimes a_n\left(S\left(h_{(n+1)}\right)?h_{(n+2)}\right)\nonumber\\
&= a_0\left(S\left(h_{(1)}\right)b_{(2)}h_{(2n+2)}\right)a_1\left(S(h_{(2)})b_{(1)} h_{(2n+1)}\right)\otimes \dots\otimes a_n\left(S\left(h_{(n+1)}\right)?h_{(n+2)}\right)\nonumber\\
&= a_0\left(S\left(h_{(1)}\right)_{(2)}b_{(2)}(h_{(2n)})_{(2)}\right)a_1\left(S\left(h_{(1)}\right)_{(1)}b_{(1)} (h_{(2n)})_{(1)}\right)\otimes \dots\otimes a_n\left(S\left(h_{(n)}\right)?h_{(n+1)}\right)\nonumber\\
&= a_0\left(\left(S\left(h_{(1)}\right)bh_{(2n)}\right)_{(2)}\right)a_1\left(\left(S\left(h_{(1)}\right)b h_{(2n)}\right)_{(1)}\right)\otimes \dots\otimes a_n\left(S\left(h_{(n)}\right)?h_{(n+1)}\right)\nonumber\\
&= a_0*a_{1}\left(S\left(h_{(1)}\right)bh_{(2n)}\right)\otimes \ldots\otimes a_n\left(S\left(h_{(n)}\right)?h_{(n+1)}\right),\label{eq:arg-d1}
\end{align}
where in the second equality we used that the antipode is a coalgebra anti-homomorphism. We thus see from~\eqref{eq:arg-d1} that the argument of $\bar{g}$ in $\delta_1$ is indeed  $h.(a_0*a_{1} \otimes a_2\otimes \ldots\otimes a_n)$ as in~\eqref{eq:delta-delta}.
For the other summands in $\delta$ the calculation is similar. This completes the proof.
\end{proof}
We can now put everything together to prove Theorem~\ref{cor:HochschildasCoefficients}.
\begin{proof}[Proof of Theorem~\ref{cor:HochschildasCoefficients}]
We observe that the cochain complex with the differential~\eqref{eq:deltap} can be written as
\begin{equation}
\delta'(g)=\beta_c\circ Z(g)+\sum_{i=1}^n(-1)^ig\circ Z^{i-1}\left(\mu_{Z^{n-i}(I)}\right)+(-1)^{n+1}g\circ Z^n(\alpha),
\end{equation}
 where  $\alpha\colon H^*_{\mathrm{coad}}\to k$ is the canonical $Z$-module action on $I$ defined by $\alpha(f)=f(1)$.
  This is isomorphic to the complex from Proposition~\ref{proposition:MonadComonad},
 setting $\beta_Y=\beta_c$ and $\beta_X=\alpha$, 
   via the isomorphism from Remark~\ref{remark:signisomorphis}, which completes the proof. 
	\end{proof}
 The Davydov-Yetter complex of the identity functor is contained in the Davydov-Yetter complex of the forgetful functor. This admits a simple expression in our reformulation.
 \begin{Remark}
 Let $i\colon I\to H^*_{\mathrm{coreg}}$ be the canonical embedding of $I$ defined by $i\colon 1\mapsto \varepsilon$.
 It is straightforward to check that it  induces a $Z$-module map from $(I,\alpha)$ to $(H^*_{\mathrm{coreg}},\beta_c)$. 
 Therefore, by Corollary~\ref{cor:functorial} we have a map from $H^n_{DY}(\Hmod)$ to $H^n_{DY}(\mathcal{U}_H)$, which is just the map induced by the map of the corresponding cochain complexes.
\end{Remark}

\section{Example: the Hopf algebras $\Lambda \mathbb{C}^k\rtimes \C[\Z_2]$}\label{ssec:example}
The exterior algebras $\Lambda \C^k$ are Hopf algebras in the symmetric category $\mathrm{SVec}_{\C}$ of complex super vector spaces. Hence, their $2^{k+1}$-dimensional `bosonizations' $B_k:=\Lambda\C^k\rtimes \C[\Z_2]$ are Hopf algebras in the usual sense, i.e.\ in the category of complex vector spaces. Compare e.g. \cite{AEG}. As an algebra they are generated by one group-like generator $g$ and $k$ skew-primitive generators $x_1,\dots ,x_k$ being subject to the relations
\begin{align}
gx_i=-x_i g,\;\;\;x_i^2=0,\;\;\; x_ix_j=-x_jx_i,\;\;\; g^2=1,
\end{align}
with $1\leq i,j\leq k$. This becomes a Hopf algebra with the following coalgebra structure and antipode
\begin{align}
\Delta(g)=& g\otimes g, \;\;\;\Delta(x_i)=1\otimes x_i+x_i\otimes g,\nonumber\\
\varepsilon(g)=& 1,\;\;\;\varepsilon(x_i)=0,\nonumber\\
S(g)=& g,\;\;\;\;S(x_i)=gx_i.
\end{align} 
The first member of this family, $B_1$, is also known as \emph{Sweedler's 4-dimensional Hopf algebra}.

In this section we will prove the following theorem.

 \begin{Theorem}\label{proposition:DYSweedler}
 For the dimensions of the Davydov-Yetter cohomologies of the identity and forgetful functor on the representation categories $B_k\mathrm{-mod}$ we have
 	\begin{equation}\label{eq:SweedlerDY}
 	\dim H_{DY}^n(B_k\mathrm{-mod})=\dim H^n_{DY}(\mathcal{U}_{B_k})=\begin{cases}
 	0&\textrm{for n odd},\\
 	{k+n-1\choose n}& \text{for n even}.
 	\end{cases}
 	\end{equation}
 \end{Theorem}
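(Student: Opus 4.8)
The plan is to reduce everything to comonad cohomology via Theorem~\ref{theorem:Main} and then to trade the exponentially growing bar resolution for a small Koszul-type $G$-resolution. For the identity functor, Theorem~\ref{theorem:Main} together with Corollary~\ref{cor:DYH-mod} identifies $H^\bullet_{DY}(B_k\mathrm{-mod})$ with the comonad cohomology $H^\bullet(\mathsf{I},\Hom_{Z\mathrm{-mod}}(?,\mathsf{I}))_G$ of the trivial coefficient $\mathsf{I}=(I,\alpha)$, where by Proposition~\ref{proposition:DZ-isomorphism} we work in $D(B_k)\mathrm{-mod}\cong Z\mathrm{-mod}$ and by Proposition~\ref{proposition:enocomplex} the monad is $Z(V)=(H^*\otimes V)_{\mathrm{coad}}$. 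By the comparison Theorem~\ref{theorem:Comparison} any $G$-resolution of $\mathsf{I}$ computes this cohomology, so the whole problem is to build a convenient one. I would take $P_n=\Ind(M_n)$, the module induced from the $B_k$-module $M_n$ whose underlying space is $\mathrm{Sym}^n(\C^k)$, on which every $x_i$ acts by zero and $g$ acts by the scalar $(-1)^n$; the parity of the $g$-action is forced to track the homological degree by the relation $gx_i=-x_ig$. Each $P_n$ is then $G$-projective by Corollary~\ref{cor:DH-G-proj}, and the differentials $P_n\to P_{n-1}$ are the Koszul contractions by the skew-primitive generators $x_1,\dots,x_k$.

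The only genuinely technical step is to prove that this complex is $G$-exact. Unwinding the definition, it suffices to show that the image of the resolution under the forgetful functor $\U\colon Z\mathrm{-mod}\to\Hmod$, namely the complex $\cdots\to Z(M_n)\to\cdots\to Z(M_0)\xrightarrow{\alpha}{}_\varepsilon\C\to0$ with $Z(M_n)=(H^*\otimes M_n)_{\mathrm{coad}}$, admits an $H$-linear contracting homotopy. I would construct this homotopy from the standard contraction of the Koszul complex resolving the trivial module over the exterior algebra $\Lambda\C^k$, transported through the bosonization and combined with the $H^*$-coregular data; the compatibility with the $\Z_2$-grading carried by $g$ is what makes the homotopy $H$-linear. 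This is the step where non-semisimplicity of $B_k$ genuinely enters and the vanishing argument of Section~\ref{ssec:Ocneanu} no longer applies, so I expect it to be the main obstacle.

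Granting $G$-exactness, the computation is immediate. Applying the coefficient functor and the adjunction $\F\dashv\U$ gives cochain groups $\Hom_{Z\mathrm{-mod}}(\Ind(M_n),\mathsf{I})\cong\Hom_{H}(M_n,{}_\varepsilon\C)$; since $g$ acts on $M_n$ by $(-1)^n$ while the $x_i$ act by zero, an $H$-linear map to the trivial module must vanish for $n$ odd and may be arbitrary for $n$ even. Hence the cochain complex has groups of dimension $\binom{k+n-1}{n}$ in even degrees and $0$ in odd degrees, so all differentials vanish identically and $H^n_{DY}(B_k\mathrm{-mod})$ equals the $n$-th cochain group, giving the stated dimensions.

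For the forgetful functor I would argue separately and then observe the coincidence. By the identification $H^n_{DY}(\U_{B_k})\cong\mathrm{HH}^n(B_k^*,k)\cong\mathrm{Ext}^n_{B_k^*}(k,k)$ recalled in the introduction, compatible with Theorem~\ref{cor:HochschildasCoefficients}, it suffices to compute $\mathrm{Ext}^\bullet_{B_k^*}(k,k)$. Since $B_k^*$ is again the bosonization of an exterior algebra by $\C[\Z_2]$ (indeed $B_k$ is self-dual, generalizing the classical self-duality of Sweedler's algebra $B_1$), I would write it as a smash product with the semisimple algebra $\C[\Z_2]$ and identify the Ext-algebra with the $\Z_2$-invariants of $\mathrm{Ext}^\bullet_{\Lambda\C^k}(k,k)$. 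Koszul duality gives $\mathrm{Ext}^\bullet_{\Lambda\C^k}(k,k)\cong\mathrm{Sym}(\C^k)$ with $g$ acting by $(-1)^n$ in degree $n$, so its invariants again have dimension $\binom{k+n-1}{n}$ for $n$ even and vanish for $n$ odd. This reproduces the same answer and establishes the displayed equality.
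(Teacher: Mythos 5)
Your overall strategy for the identity functor (Theorem~\ref{theorem:Main}, the comparison Theorem~\ref{theorem:Comparison}, and a small Koszul-type $G$-resolution) is the paper's, and your final dimension count would be correct \emph{if} your resolution existed. It does not: no choice of differentials can make the complex with terms $P_n=\mathrm{Ind}(M_n)$ $G$-exact. Indeed, $M_n\cong I_{(-)^n}^{\oplus a_n}$ as a $B_k$-module, where $a_n=\binom{k+n-1}{n}$, so combining $\mathrm{Ind}(V)\cong G(V)$ (Section~\ref{ssec:G-proj-Hopf}) with Lemma~\ref{lemma:DecomCoad} gives
\begin{equation*}
P_n\;\cong\;G\bigl(\mathcal{I}_{(-)^n}\bigr)^{\oplus a_n}\;\cong\;\bigl(\mathcal{A}_{(-)^{k+n}}\oplus\mathcal{C}_{(-)^n}\bigr)^{\oplus a_n}.
\end{equation*}
The simple projective summands $\mathcal{A}_{(-)^{k+n}}$ are fatal. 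In adjacent degrees they carry opposite signs, hence are non-isomorphic simple $D(B_k)$-modules with no nonzero homomorphisms between them; moreover $\Hom_{D(B_k)}(\mathcal{A}_\sigma,\mathcal{C}_\tau)=\Hom_{D(B_k)}(\mathcal{C}_\tau,\mathcal{A}_\sigma)=0$ for all signs, because $\mathcal{A}_\sigma$ is simple of dimension $2^k$ while the reducible $2^k$-dimensional module $\mathcal{C}_\tau$ can only have the one-dimensional simples $\mathcal{I}_\pm$ as composition factors (Lemma~\ref{lem:D-simples}). Hence every $D(B_k)$-linear differential $P_n\to P_{n-1}$, as well as the augmentation $P_0\to\mathcal{I}$, vanishes identically on the $\mathcal{A}$-summands (in particular your differentials can only act through the $y_i$'s on the $\mathcal{C}$-parts, not by ``contraction by the $x_i$''), so the complex splits off the subcomplex $\bigl(\mathcal{A}_{(-)^{k+n}}^{\oplus a_n}\bigr)_{n\geq 0}$ with zero differentials. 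This violates $G$-exactness: taking $W=\mathcal{A}_{(-)^{k+n}}$, the adjunction gives $\Hom_{D(B_k)}\bigl(G(W),\mathcal{A}_{(-)^{k+n}}\bigr)\cong\Hom_{B_k}\bigl(P_{(-)^{k+n}},P_{(-)^{k+n}}\bigr)\neq 0$, and this space survives as cohomology of the Hom-complex in degree $n$. So the step you deferred as ``the main obstacle'' (an $H$-linear contracting homotopy) is not merely hard, it is impossible. The repair is exactly what the paper does: discard the $\mathcal{A}$-summands -- the modules $\mathcal{C}_{(-)^n}$ remain $G$-projective by Lemma~\ref{lemma:directsummands} and Corollary~\ref{Cor:Gprojectives} -- and resolve $\mathcal{I}$ by $\mathcal{C}_{(-)^n}^{\oplus a_n}$ alone (Lemma~\ref{lemma:G-resolution}); proving the decomposition in Lemma~\ref{lemma:DecomCoad} and the $G$-exactness of this trimmed Koszul complex (via Lemma~\ref{lem:A-cpx}) is the actual content of the proof, which your proposal bypasses.

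The forgetful-functor half of your proposal is correct and takes a genuinely different route from the paper's main argument: instead of re-using the $G$-resolution with the coefficient $\bigl(B^*_{k,\mathrm{coreg}},\beta_c\bigr)$ (Theorem~\ref{cor:HochschildasCoefficients} and Lemma~\ref{lemma:DecomCoreg}), you compute $\mathrm{Ext}^\bullet_{B_k^*}(k,k)$ directly from the smash-product structure $B_k^*\cong\Lambda\mathbb{C}^k\rtimes\mathbb{C}[\mathbb{Z}_2]$, semisimplicity of $\mathbb{C}[\mathbb{Z}_2]$ in characteristic zero, and Koszul duality. This is precisely the alternative sketched in the closing remark of Section~\ref{ssec:example}, and it is sound. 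Note, however, that it only establishes the value of $\dim H^n_{DY}(\mathcal{U}_{B_k})$ in~\eqref{eq:SweedlerDY}; the computation of $\dim H^n_{DY}(B_k\mathrm{-mod})$, and hence the displayed equality of the two, still rests on the flawed resolution above.
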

  \begin{Remark}
  	In particular, we have $H_{DY}^3(B_k\mathrm{-mod})=0$. Hence, $B_k\mathrm{-mod}$ does not admit non-trivial first order deformations. Nevertheless, $\dim H^2_{DY}(B_k\mathrm{-mod})=\frac{(k+1)k}{2}$, which implies the existence of non-trivial first order deformations of the identity functor, which are furthermore unobstructed. 
	We give few explicit examples in Remark~\ref{rem:DY-exmp}. 
Already the case of $B_1$ shows that Ocneanu rigidity does not hold for general non-semisimple finite tensor categories.
  \end{Remark}
  
The proof is based on our reformulation of DY cohomologies (Theorem~\ref{theorem:Main}) and the representation theory of the Drinfeld double $D(B_k)$. More precisely, we  construct a (non-trivial) $G$-resolution for the tensor unit $(I,\alpha)$ in $D(B_k)\mathrm{-mod}$ and then apply the functor
 $\Hom_{Z\mathrm{-mod}}(?,(I,\alpha))$, recall the isomorphism of categories in Proposition~\ref{proposition:DZ-isomorphism}. By Theorem~\ref{theorem:Comparison}, the resulting complex is quasi-isomorphic to the comonad $G$ complex with trivial coefficients in Theorem~\ref{theorem:Main}, and hence to the Davydov-Yetter complex of the identity functor.
We can use the same $G$-resolution in the case of the forgetful functor, but here we apply the coefficient functor $\Hom_{Z\mathrm{-mod}}(?,(H^*_{\mathrm{coreg}},\beta_c))$.
 
Let 
\be\label{eq:epm}
e_{\pm}:=\frac{1\pm g}{2}
\ee
 denote the idempotents of the algebra $B_k$.
 The following are indecomposable modules over $B_k$ that we will make use of:
\begin{itemize}
	\item The two one-dimensional simple modules $I_{\pm}$ with one generator $v$ such that $x_i.v=0$ and $g.v=\pm v$. We denote the trivial module with $I=I_+$ as well.
	\item The projective covers $P_{\pm}$ of $I_{\pm}$, they are given by
	$$
	P_\pm := B_k \cdot e_{\pm}
	$$
	 and they are $2^{k}$-dimensional.
\end{itemize}

The Drinfeld double $D(B_k)$ has additional generators (those of the subalgebra $B_k^*$)
	\begin{equation}
	y_i:=x_i^*-(x_ig)^*\qquad\text{and}\qquad h:=1^*-g^*,
	\end{equation}
  where $?^*$ denotes the dual basis elements of the basis     in $B_k$:
   $$
   \{x_{i_1}\dots x_{i_l}g^r \, | \, 1\leq i_1<\dots<i_l\leq k, 0\leq l \leq k, r\in\mathbb{Z}_2\}\ .
   $$
  These generators are subject to the following relations, recall~\eqref{equation:multiplication} and~\eqref{eq:drinfeldmultiplication},
  \begin{equation}\label{eq:relationdD1}
  h^2=1,\qquad \{y_i,y_j\}=0,\qquad \{y_i,h\}=0
  \end{equation}
   and 
  \begin{equation}\label{eq:relationD2}
  [g,h]=0,\qquad \{y_i,g\}=0,\qquad \{x_i,h\}=0,\qquad
   \{x_i,y_j\}=\delta_{i,j}(1-hg)
  \end{equation}
  for all $1\leq i,j\leq k$ and with $\{a,b\}:=ab+ba$ denoting the anticommutator.
The last relation implies that on any $D(B_k)$-module the action of the generator $h$ is determined by the actions of the other generators, and therefore we will often suppress it in the discussion.
  
   The following are some indecomposable modules of $D(B_k)$ that we will make use of (compare \cite[Prop.~3.10 \& Sec.~3.7]{FGR}\footnote{We note that conventions on the Drinfeld double in~\cite{FGR} are slightly different but the two doubles are isomorphic.}):
	\begin{itemize}
		\item The two one-dimensional simple modules $\mathcal{I}_{\pm}:=\mathrm{Span}(v)$ with $x_i.v=y_i.v=0$ while the action of $g$ is given by $\pm1$. Note that the action of $h$ is then fixed by the relations~\eqref{eq:relationD2} to be  $g$. In particular, we have for the tensor unit 
		$$
		\mathcal{I}=\mathcal{I}_{+} = (I,\alpha).
		$$
		\item The projective (and injective) simple modules $\mathcal{A}_{\pm}$ of dimension $2^k$  are defined as
		\begin{equation}\label{eq:Apm-def}
		\mathcal{A}_{\pm}:=\mathrm{Span}\Big\{x_1^{i_1}\dots x_k^{i_k}v_{\pm}\,|\,(i_1,\dots,i_k)\in \mathbb{Z}_2^k\Big\},
		\end{equation}
		where $v_{\pm}$ is a cyclic vector such that $y_i.v_{\pm}=0$ and $g.v_{\pm}=\pm v_{\pm}$, and $h.v_{\pm}= \mp v_{\pm}$. We note that $\mathcal{A}_{\pm}$  considered as a $B_k$-module is isomorphic to $P_\pm$.

		\item The modules $\mathcal{B}_{\pm}$  are $P_{\pm}$ as $B_k$-modules and with the trivial action $y_i.v=0$ for all $v\in \mathcal{B}_{\pm}$. In this case, we have  that $h$ acts as $g$. We note that these modules are reducible but indecomposable.
		
		\item The modules $\mathcal{C}_{\pm}$:  let $f_\pm=\frac{1\pm h}{2}$ denote the primitive idempotents of $B_k^*$, then $\mathcal{C}_{\pm}$ as a  $B_k^*$-module is defined as
		\be\label{eq:Cpm-def}
		\mathcal{C}_{\pm} := B_k^* \cdot f_\pm
		\ee
		while the $B_k$ action is fixed via $x_i.v=0$ for all $v\in \mathcal{C}_{\pm}$ and $g$ acts  as $h$. These modules  are also reducible but indecomposable.
		
		\item We will use the notation $B^*_{k,\mathrm{coad}} = (B^*_{k})_\mathrm{coad}$ for the coadjoint module defined as in~\eqref{eq:coad}.
	\end{itemize}

We have the following simple lemma.

\begin{Lemma}\label{lem:D-simples}
The modules $\mathcal{A}_{\pm}$ and $\mathcal{I}_{\pm}$ exhaust all simple $D(B_k)$-modules up to isomorphism. Their isomorphism class is uniquely determined by the action of the pair $(g,h)$ on the cyclic vector: $(\pm,\mp)$ corresponds to $\mathcal{A}_{\pm}$ while $(\pm,\pm)$ corresponds to $\mathcal{I}_{\pm}$.
\end{Lemma}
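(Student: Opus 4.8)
The plan is to exploit the two commuting involutions $g$ and $h$ to decompose any finite-dimensional $D(B_k)$-module and thereby reduce the classification to a Clifford-algebra computation. First I would note that by~\eqref{eq:relationdD1} and~\eqref{eq:relationD2} the elements $g,h$ commute and satisfy $g^2=h^2=1$, hence (over the algebraically closed field $\C$ of characteristic $0$) are simultaneously diagonalizable with eigenvalues in $\{\pm1\}$. Thus any module $M$ splits as $M=\bigoplus_{\epsilon,\delta\in\{\pm1\}}M_{\epsilon,\delta}$ into joint eigenspaces for $(g,h)$. Since $x_i$ and $y_j$ anticommute with both $g$ and $h$ (relations $\{x_i,h\}=\{y_i,g\}=\{y_i,h\}=0$ together with $g x_i=-x_i g$), each of them maps $M_{\epsilon,\delta}$ into $M_{-\epsilon,-\delta}$ and therefore preserves the product $\epsilon\delta$. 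Consequently $M^{(+)}:=\bigoplus_{\epsilon\delta=1}M_{\epsilon,\delta}$ and $M^{(-)}:=\bigoplus_{\epsilon\delta=-1}M_{\epsilon,\delta}$ are $D(B_k)$-submodules, and a simple module must lie entirely in one of the two sectors.

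Next I would analyze the two sectors separately, the key being the value of $1-hg$ appearing in $\{x_i,y_j\}=\delta_{ij}(1-hg)$. On $M^{(+)}$ we have $hg=1$, so this relation degenerates to $\{x_i,y_j\}=0$; together with $x_i^2=y_j^2=0$ and the remaining vanishing anticommutators, the generators $x_i,y_j$ are nilpotent and pairwise anticommuting, hence span a nilpotent two-sided ideal in the image of $D(B_k)$ acting on $M^{(+)}$, which must act by zero on any simple module. A simple $M^{(+)}$ is therefore a simple module over $\C[g]/(g^2-1)$, i.e.\ one-dimensional with $g=\pm1$ and (since $h=g$ on this sector) $h=\pm1$; these are exactly $\mathcal{I}_{\pm}$.

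On $M^{(-)}$ we have $hg=-1$, so $\{x_i,y_j\}=2\delta_{ij}$ while $\{x_i,x_j\}=\{y_i,y_j\}=0$. Setting $a_i:=x_i$ and $a_i^{\dagger}:=\tfrac12 y_i$ turns these into the fermionic relations $\{a_i,a_j^{\dagger}\}=\delta_{ij}$, i.e.\ the Clifford algebra on $2k$ generators, which over $\C$ is isomorphic to the full matrix algebra $M_{2^k}(\C)$ and has a unique simple module of dimension $2^k$ (the Fock space on the $y_i$-annihilated vacuum). Since $g$ anticommutes with all $x_i,y_j$ and squares to $1$, a Schur argument shows that $g$ coincides, up to a sign, with the fermion parity operator of the Fock space; the two signs give precisely the two $2^k$-dimensional simple modules $\mathcal{A}_{\pm}$. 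This exhausts all simple modules. Finally, reading off the $(g,h)$-eigenvalues on the cyclic vectors — $(\pm,\pm)$ for $\mathcal{I}_{\pm}$ and $(\pm,\mp)$ for $\mathcal{A}_{\pm}$ — gives four distinct pairs, so the isomorphism class is determined by the action of $(g,h)$ on the cyclic vector.

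I expect the main obstacle to be the $M^{(-)}$ sector: one must correctly identify the Clifford/CAR algebra, invoke the uniqueness of its irreducible representation, and justify via Schur's lemma that the remaining generator $g$ is forced to act as $\pm$ the parity operator, so that exactly the two modules $\mathcal{A}_{\pm}$ arise and no others.
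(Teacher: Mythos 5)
Your proof is correct, but it follows a route the paper does not take: the paper offers no proof of this lemma at all --- it is stated as a ``simple lemma'', with the relevant $D(B_k)$-modules introduced by comparison with \cite[Prop.~3.10 \& Sec.~3.7]{FGR}, so the classification is effectively imported from that reference. Your argument is a sound, self-contained substitute. The sector decomposition is clean (equivalently: $hg$ is a \emph{central} involution, since each $x_i,y_j$ anticommutes with both $g$ and $h$, so every simple module lies in one sector $hg=\pm1$). In the sector $hg=1$ the $2k$ generators $x_i,y_j$ pairwise anticommute and square to zero, so the ideal they generate is nilpotent (products of length $>2k$ vanish), hence lies in the Jacobson radical and kills every simple module, leaving exactly $\mathcal{I}_{\pm}$; in the sector $hg=-1$ one gets the Clifford/CAR algebra $\mathrm{Cl}_{2k}(\C)\cong M_{2^k}(\C)$ extended by $g$. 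For this last step, instead of the informal appeal to Schur's lemma (which, as stated, still requires showing that the Fock space occurs with multiplicity one in a simple module), it is tidier to observe that parity is an \emph{inner} automorphism, implemented by an even involution $P$ commuting with $g$, so that $gP$ is a central involution and $D(B_k)/(1+hg)\cong \mathrm{Cl}_{2k}(\C)\otimes\C[\Z_2]\cong M_{2^k}(\C)\oplus M_{2^k}(\C)$; this exhibits exactly two simples in that sector, namely $g=\pm P$ on the Fock space, which are $\mathcal{A}_{\pm}$, identified as you do via the $(g,h)$-eigenvalues on the one-dimensional (hence canonical) space of $y$-annihilated vectors. What your proof buys is that the paper's example section becomes independent of the external classification; what the paper's citation buys is brevity, together with the indecomposable modules ($\mathcal{B}_{\pm}$, $\mathcal{C}_{\pm}$) that it needs anyway and that your argument does not produce.
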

		
In the following lemma we decompose the $G$-projective module $(Z(I),\mu_I)=\left(B^*_{k,\mathrm{coad}},\mu_{I}\right)$.
Direct summands of this module are $G$-projective
 and we will use them as building blocks for a $G$-resolution in Lemma~\ref{lemma:G-resolution}.
\begin{Lemma}\label{lemma:DecomCoad}
We have the following decomposition of $D(B_k)$-modules:
\begin{equation}\label{eq:GI-decomp-DB}
 G(\mathcal{I})=\left(B^*_{k,\mathrm{coad}},\mu_{I}\right)\cong\mathcal{A}_{(-)^k}\oplus \mathcal{C}_{+}
 \end{equation}
 and
 \begin{equation}
 G(\mathcal{I}_-)=(Z(I_-),\mu_{I_-})\cong\mathcal{A}_{(-)^{k+1}}\oplus \mathcal{C}_{-},
 \end{equation}
 where we set $(-)^{k}:=(-1)^k$.
\end{Lemma}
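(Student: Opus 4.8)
The plan is to compute the $D(B_k)$-module $G(\mathcal{I})=(B^*_{k,\mathrm{coad}},\mu_I)$ explicitly and to split it using the special homological role of $\mathcal{A}_\pm$. Recall from Propositions~\ref{proposition:enocomplex} and~\ref{proposition:DZ-isomorphism} that on $G(\mathcal{I})$ the subalgebra $B_k^*\subset D(B_k)$ acts by left $*$-multiplication while $B_k$ acts by the coadjoint action~\eqref{eq:coad}, and (recall $G(V)\cong\Ind(V)$) that $G(\mathcal{I})\cong\Ind(I_+)=D(B_k)\otimes_{B_k}I_+$, of dimension $2^{k+1}$. The key leverage is that by Lemma~\ref{lem:D-simples} the modules $\mathcal{A}_\pm$ are the projective-injective simples, so $\mathrm{Ext}^1(\mathcal{A}_\pm,-)=\mathrm{Ext}^1(-,\mathcal{A}_\pm)=0$; hence whenever $\mathcal{A}_\pm$ occurs as a composition factor of a module it occurs as a direct summand, and in particular the number of summands isomorphic to $\mathcal{A}_\pm$ equals its composition multiplicity.

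First I would split off $\mathcal{A}_{(-)^k}$ by Frobenius reciprocity. Since induction is left adjoint to restriction, $\Hom_{D(B_k)}(\Ind(I_+),\mathcal{A}_{(-)^k})\cong\Hom_{B_k}(I_+,\mathrm{Res}_{B_k}\mathcal{A}_{(-)^k})=\Hom_{B_k}(I_+,P_{(-)^k})$, using that $\mathcal{A}_\pm$ restricts to $P_\pm$. A one-line computation of the $B_k$-socle of $P_\epsilon=B_k\,e_\epsilon$ (with $e_\epsilon$ as in~\eqref{eq:epm}) shows $\mathrm{soc}_{B_k}P_\epsilon=\C\cdot x_1\cdots x_k\,e_\epsilon$, on which $g$ acts by $(-1)^k\epsilon$, so $\mathrm{soc}_{B_k}P_{(-)^k}\cong I_+$ and the $\Hom$-space is one-dimensional. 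The resulting nonzero map is surjective because $\mathcal{A}_{(-)^k}$ is simple, and it splits because $\mathcal{A}_{(-)^k}$ is projective, giving $G(\mathcal{I})=\mathcal{A}_{(-)^k}\oplus Q$ with $\dim Q=2^k$. Running the same reciprocity over all simples of Lemma~\ref{lem:D-simples} shows the head of $\Ind(I_+)$ is $\mathcal{A}_{(-)^k}\oplus\mathcal{I}_+$; thus $Q$ is indecomposable with simple head $\mathcal{I}_+$, and by the summand-multiplicity remark $Q$ has no $\mathcal{A}_\pm$ among its composition factors, so all $2^k$ of them are one-dimensional.

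It remains to identify $Q\cong\mathcal{C}_+$. Restricting to $B_k^*$, the module $G(\mathcal{I})|_{B_k^*}$ is the regular module, with Peirce decomposition $B_k^*f_+\oplus B_k^*f_-$ for the idempotents $f_\pm=\frac{1\pm h}{2}$ of~\eqref{eq:Cpm-def}; a short eigenvalue count (using that the cyclic vector of $\mathcal{A}_{(-)^k}$ is $y_i$-annihilated with $h$-eigenvalue $(-1)^{k+1}$) identifies $\mathcal{A}_{(-)^k}|_{B_k^*}$ with $B_k^*f_-$, hence $Q|_{B_k^*}\cong B_k^*f_+=\mathcal{C}_+|_{B_k^*}$. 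To promote this to a $D(B_k)$-isomorphism I would exhibit a $D(B_k)$-map $Q\to\mathcal{C}_+$ that is the identity on the common head $\mathcal{I}_+=\mathrm{top}(\mathcal{C}_+)$; since $\mathcal{C}_+$ has simple top, Nakayama's lemma makes such a map surjective, whence an isomorphism by equality of dimensions. Here the head distinguishes $\mathcal{C}_+$ from $\mathcal{C}_-$ (whose top is $\mathcal{I}_-$) and from the $\mathcal{B}_\pm$. The companion statement for $G(\mathcal{I}_-)$ follows by repeating the entire argument with $I_+$ replaced by $I_-$: every $g$- and $h$-parity flips, exchanging $f_+\leftrightarrow f_-$ and sending $\mathcal{A}_{(-)^k}\rightsquigarrow\mathcal{A}_{(-)^{k+1}}$ and $\mathcal{C}_+\rightsquigarrow\mathcal{C}_-$.

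The clean part is the splitting of $\mathcal{A}_{(-)^k}$, which is forced by its projectivity-injectivity together with the one-line socle computation. The main obstacle is the precise identification of the indecomposable complement $Q$ with $\mathcal{C}_+$ rather than with $\mathcal{C}_-$ or some $\mathcal{B}_\pm$: this requires writing the coadjoint action~\eqref{eq:coad} of $B_k$ on $B_k^*$ in the dual basis in order to verify that $x_i$ annihilates $Q$ (equivalently, to construct the head-preserving map $Q\to\mathcal{C}_+$) and to track the $h$-parity, using the relation $\{x_i,y_j\}=\delta_{i,j}(1-hg)$ from~\eqref{eq:relationD2} to force $h=g$ on the complement.
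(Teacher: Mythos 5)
Your first step is sound and genuinely different from the paper's route: where the paper identifies the summand $\mathcal{A}_{(-)^k}$ inside $B^*_{k,\mathrm{coad}}$ by explicit dual-basis computations of the $g$-, $x_j$- and $y_j$-actions, you obtain it structurally from $G(\mathcal{I})\cong\Ind(I_+)$, Frobenius reciprocity $\Hom_{D(B_k)}\bigl(\Ind(I_+),\mathcal{A}_{(-)^k}\bigr)\cong\Hom_{B_k}\bigl(I_+,P_{(-)^k}\bigr)\cong\C$ (via the socle of $P_{(-)^k}$), surjectivity onto a simple, and splitting by projectivity of $\mathcal{A}_{(-)^k}$; the head computation and the projective-injective trick then correctly show the complement $Q$ is indecomposable with head $\mathcal{I}_+$ and only one-dimensional composition factors. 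This part is a cleaner argument than the paper's.

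The gap is the final identification $Q\cong\mathcal{C}_+$, and it is not cosmetic: the invariants you assemble --- $Q$ indecomposable with head $\mathcal{I}_+$, $Q|_{B_k^*}\cong B_k^*f_+$, and even $g=h$ on $Q$ (which does follow, since the central involution $gh$ acts by a scalar on an indecomposable and by $+1$ on the head) --- do \emph{not} determine $Q$, so the head-preserving map $Q\to\mathcal{C}_+$ you propose to ``exhibit'' need not exist on the strength of these facts. Already for $k=1$: for each $a\in\C$ there is a two-dimensional $D(B_1)$-module $Q_a$ with basis $\{w,u\}$, $g.w=h.w=w$, $g.u=h.u=-u$, $y.w=u$, $x.w=au$, $x.u=y.u=0$; all relations in~\eqref{eq:relationdD1} and~\eqref{eq:relationD2} hold (on $Q_a$ one has $hg=1$, so $\{x,y\}$ must vanish, and it does). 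Every $Q_a$ is indecomposable with head $\mathcal{I}_+$, restricts to $B_1^*f_+$ over $B_1^*$, and satisfies $g=h$, yet $Q_a\cong\mathcal{C}_+$ only for $a=0$, and for $a\neq 0$ one checks $\Hom_{D(B_1)}(Q_a,\mathcal{C}_+)=0$, so no nonzero (let alone head-preserving) map exists. Thus the existence of your map is \emph{equivalent} to the statement that the $x_i$ annihilate the complement, which is exactly the nontrivial content of the lemma --- it is what distinguishes $\mathcal{C}_+$ from $\mathcal{B}$-type extensions and mixtures thereof. You do flag this at the end and point to the correct fix, namely computing the coadjoint action~\eqref{eq:coad} in the dual basis; but that computation (the paper's formulas showing $x_j.(x_{i_1}\dots x_{i_m})^*=0$ together with the $y_j$-action) \emph{is} the core of the paper's proof, so as written your argument does not replace it but defers its essential step back to it.
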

\begin{proof}
To prove the decomposition of  $G(\mathcal{I})$ in~\eqref{eq:GI-decomp-DB}, we first analyze the  $B_k$-action in the coadjoint representation.
On the basis elements in $B_k^*$, we have
$$
g.(x_{i_1}\dots x_{i_m})^*=(-1)^m(x_{i_1}\dots x_{i_m})^*\qquad g.(x_{i_1}\dots x_{i_m}g)^*=(-1)^m(x_{i_1}\dots x_{i_m}g)^*
$$
and 
\begin{align}
x_j.(x_{i_1}\dots x_{i_m})^*=&0 \qquad \text{for all}\qquad j,\label{eq:j-act-1}\\
x_j.(x_{i_1}\dots x_{i_m}g)^*=&\begin{cases}
2(-1)^{m-l+1}(x_{i_1}\dots \hat{x}_{i_l}\dots x_{i_m}g)^*&\textrm{for}\;\; i_l=j\\
0& \text{for}\;\;i_l\neq j\;\;\forall l,
\end{cases}\label{eq:j-act-2}
\end{align}
where the notation $\hat{x}_{i_l}$ means that we omit  the corresponding element.
From this action, we obtain the following $B_k$-submodules in a basis:
\begin{align}
B_k.(x_1\dots x_kg)^* & = \mathrm{Span} \left\{ (x_{i_1}\dots x_{i_m}g)^* | 1 \leq i_1 < i_2 < \dots < i_m \leq k \right\}\nonumber\\
& \cong P_{(-)^k}\label{eq:P-basis}
\end{align}
and
\be\label{eq:I-basis}
 \mathrm{Span} \left\{ (x_{i_1}\dots x_{i_m})^* | 1 \leq i_1 < i_2 < \dots < i_m \leq k \right\}  \cong I_+^{\oplus 2^{k-1}}\oplus I_{-}^{\oplus 2^{k-1}} .
\ee
We note that the isomorphism in~\eqref{eq:P-basis} is easy to establish after identifying the cyclic vector $w=(x_1 x_2 \dots x_k g)^*$, where $g$ acts by $(-1)^{k}$, with the cyclic vector $e_{(-)^k}$ of $P_{(-)^k}$ defined in~\eqref{eq:epm}. The isomorphism in~\eqref{eq:I-basis} is obvious.
We therefore have a decomposition over the subalgebra $B_k$:
\be\label{eq:GI-decomp}
 G(\mathcal{I})|_{B_k} = P_{(-)^k} \oplus I_+^{\oplus 2^{k-1}}\oplus I_{-}^{\oplus 2^{k-1}}.
\ee
Next, we compute  the actions of  $y_i\in B_k^*$.
Recall that $B_k^*$ acts via the multiplication on $B_k^*$ defined by $\phi*\psi=\phi\otimes\psi\circ \Delta^{op}$ for $\phi,\psi\in B_k^*$.
 We use the coproduct formula for the basis elements of $B_k$
\begin{align}
\Delta(x_{i_1}\ldots x_{i_m} g^r)
&=(1\otimes x_{i_1}+x_{i_1}\otimes g)\ldots (1\otimes x_{i_m}+x_{i_m}\otimes g)g^r\otimes g^r\\
&=\sum_{b\in\mathbb{Z}_2^{\times m}}x_{i_1}^{b_1}\ldots x_{i_m}^{b_m}g^r\otimes x_{i_1}^{1-b_1}g^{b_1}\dots x_{i_m}^{1-b_m}g^{b_m}g^r\label{eq:coproductformula},
\end{align}
where $r\in\mathbb{Z}_2$,
to calculate the products
\be\label{eq:y-act-0}
y_{i_l}.(x_{i_1}\dots {x}_{i_l}\dots x_{i_m})^*= y_{i_l}.(x_{i_1}\dots {x}_{i_l}\dots x_{i_m}g)^*  = 0 
\ee
and 
\be\label{eq:y-act}
\begin{split}
y_{i_l}.(x_{i_1}\dots \hat{x}_{i_l}\dots x_{i_m})^*=&(-1)^{m-l}(x_{i_1}\dots  x_{i_l} \dots x_{i_m})^*,\\
y_{i_l}.(x_{i_1}\dots\hat{x}_{i_l}\dots x_{i_m}g)^*=&(-1)^{m-l-1}(x_{i_1}\dots x_{i_l} \dots x_{i_m}g)^*.
\end{split}
\ee
With these explicit actions, we are now able to analyze the decomposition of $G(\mathcal{I})$ over $D(B_k)$. 
We first note that  $B_k^*$ acts on the direct summand $P_{(-)^k}$ in~\eqref{eq:GI-decomp} because of its basis given in~\eqref{eq:P-basis}. We claim that the resulting $D(B_k)$ module is isomorphic to $\mathcal{A}_{(-)^k}$, recall its definition in~\eqref{eq:Apm-def}. First, the resulting $D(B_k)$ module has the cyclic vector $w=(x_1 x_2 \dots x_k g)^*$ such that $y_i w = 0$ for  $1\leq i \leq k$. Secondly, this module has action of $h=-g$ and it is indecomposable. Due to the classification in Lemma~\ref{lem:D-simples}, this module should have $\mathcal{A}_{(-)^k}$ as a simple subquotient but they both have the same dimension. Therefore, the first direct summand in~\eqref{eq:GI-decomp} is indeed isomorphic to $\mathcal{A}_{(-)^k}$. For the reader's convenience we also present the $D(B_k)$ action schematically in the left part of Figure~\ref{fig:arrows}.

We now analyze the second part of~\eqref{eq:GI-decomp}. Again, from the $y_i$ actions in~\eqref{eq:y-act-0} and~\eqref{eq:y-act} the  summand $I_+^{\oplus 2^{k-1}}\oplus I_{-}^{\oplus 2^{k-1}}$ is closed under the action of $B_k^*$. It has a cyclic vector $1^*$ with the action $h.1^* = 1^*$. Moreover, the action of the subalgebra generated by $y_i$, $1\leq i \leq k$, is free as follows from~\eqref{eq:y-act}. We therefore have that the resulting $D(B_k)$-module is a projective module over $B_k^*$ isomorphic to $B_k^*. f_+$, i.e.\ we identify the cyclic vector $1^*$  with~$f_+$. Finally, the action of $x_i$'s is trivial due to~\eqref{eq:j-act-1}, and so the submodule is identified with~$\mathcal{C}_+$,
recall the definition in~\eqref{eq:Cpm-def} (see also  the right part of Figure~\ref{fig:arrows}). This concludes the proof of~\eqref{eq:GI-decomp-DB}.

\newcommand{\ssign}[1]{\text{\scriptsize$(-)^{#1}$}}
\begin{figure}[!ht]
{\scriptsize
\begin{tikzpicture}
\matrix (m) [matrix of math nodes,row sep=5.1em,column sep=1em,minimum width=1em]
{
	m=k\\
	{m= k-1}\\
	{m=  k-2}\\
	{m}\\
	{m=  0}\\
};
\path[-stealth]

;
\end{tikzpicture}
}
\;\;
\begin{tikzpicture}
\matrix (m) [matrix of math nodes,row sep=3.3em,column sep=0.5em,minimum width=1.5em]
{
	&&\ssign{k}&&\\
	&\ssign{k-1}&\dots &\ssign{k-1}&\\
	\ssign{k}&&\dots &&\ssign{k}\\
	&\dots &\dots &\dots &\\
	&&+&&\\
};
\path[-stealth]

(m-1-3)  edge node [above] {$$} (m-2-3)
(m-1-3)  edge node [above] {$$} (m-2-2)
(m-1-3)  edge node [above] {$$} (m-2-4)
(m-2-2)  edge node [above] {$$} (m-3-3)
(m-2-3)  edge node [above] {$$} (m-3-1)
(m-2-3)  edge node [above] {$$} (m-3-5)
(m-2-3)  edge node [above] {$$} (m-3-3)
(m-2-4)  edge node [above] {$$} (m-3-3)
(m-2-2)  edge node [above] {$$} (m-3-1)
(m-2-4)  edge node [above] {$$} (m-3-5)
(m-3-1)  edge node [above] {$$} (m-4-2)
(m-3-1)  edge node [above] {$$} (m-4-3)
(m-3-3)  edge node [above] {$$} (m-4-2)
(m-3-3)  edge node [above] {$$} (m-4-3)
(m-3-3)  edge node [above] {$$} (m-4-4)
(m-3-5)  edge node [above] {$$} (m-4-3)
(m-3-5)  edge node [above] {$$} (m-4-4)
(m-4-2)  edge node [above] {$$} (m-5-3)
(m-4-3)  edge node [above] {$$} (m-5-3)
(m-4-4)  edge node [above] {$$} (m-5-3)

(m-2-3)  edge [dashed, bend left=15] node [above] {$$} (m-1-3)
(m-2-2)  edge [dashed, bend left=15] node [above] {$$} (m-1-3)
(m-2-4)  edge [dashed, bend left=15] node [above] {$$} (m-1-3)
(m-3-3)  edge [dashed, bend left=15] node [above] {$$} (m-2-2)
(m-3-1)  edge [dashed, bend left=15] node [above] {$$} (m-2-3)
(m-3-5)  edge [dashed, bend left=15] node [above] {$$} (m-2-3)
(m-3-3)  edge [dashed, bend left=15] node [above] {$$} (m-2-3)
(m-3-3)  edge [dashed, bend left=15] node [above] {$$} (m-2-4)
(m-3-1)  edge [dashed, bend left=15] node [above] {$$} (m-2-2)
(m-3-5)  edge [dashed, bend left=15] node [above] {$$} (m-2-4)

(m-4-3)  edge [dashed, bend left=15] node [above] {$$} (m-3-5)
(m-4-4)  edge [dashed, bend left=15] node [above] {$$} (m-3-5)
(m-5-3)  edge [dashed, bend left=15] node [above] {$$} (m-4-2)
(m-5-3)  edge [dashed, bend left=15] node [above] {$$} (m-4-4)
(m-5-3)  edge [dashed, bend left=15] node [above] {$$} (m-4-3)
(m-4-3)  edge [dashed, bend left=15] node [above] {$$} (m-3-3)
(m-4-4)  edge [dashed, bend left=15] node [above] {$$} (m-3-3)
(m-4-2)  edge [dashed, bend left=15] node [above] {$$} (m-3-3)
(m-4-4)  edge [dashed, bend left=15] node [above] {$$} (m-3-3)
(m-4-2)  edge [dashed, bend left=15] node [above] {$$} (m-3-1)
(m-4-3)  edge [dashed, bend left=15] node [above] {$$} (m-3-1)
;
\end{tikzpicture}
\begin{tikzpicture}
\matrix (m) [matrix of math nodes,row sep=4.2em,column sep=1em,minimum width=1em]
{
	\\
	\\
	\bigoplus\\
	\\
	\\
};
\path[-stealth]
;
\end{tikzpicture}
\begin{tikzpicture}
\matrix (m) [matrix of math nodes,row sep=3.3em,column sep=0.5em,minimum width=1.5em]
{
	&&\ssign{k}&&\\
	&\ssign{k-1}&\dots &\ssign{k-1}&\\
	\ssign{k}&&\dots &&\ssign{k}\\
	&\dots &\dots &\dots &\\
	&&+&&\\
};
\path[-stealth]

(m-2-3)  edge [dashed] node [above] {$$} (m-1-3)
(m-2-2)  edge [dashed] node [above] {$$} (m-1-3)
(m-2-4)  edge [dashed] node [above] {$$} (m-1-3)
(m-3-3)  edge [dashed] node [above] {$$} (m-2-2)
(m-3-1)  edge [dashed] node [above] {$$} (m-2-3)
(m-3-5)  edge [dashed] node [above] {$$} (m-2-3)
(m-3-3)  edge [dashed] node [above] {$$} (m-2-3)
(m-3-3)  edge [dashed] node [above] {$$} (m-2-4)
(m-3-1)  edge [dashed] node [above] {$$} (m-2-2)
(m-3-5)  edge [dashed] node [above] {$$} (m-2-4)

(m-4-3)  edge [dashed] node [above] {$$} (m-3-5)
(m-4-4)  edge [dashed] node [above] {$$} (m-3-5)
(m-5-3)  edge [dashed] node [above] {$$} (m-4-2)
(m-5-3)  edge [dashed] node [above] {$$} (m-4-4)
(m-5-3)  edge [dashed] node [above] {$$} (m-4-3)
(m-4-3)  edge [dashed] node [above] {$$} (m-3-3)
(m-4-4)  edge [dashed] node [above] {$$} (m-3-3)
(m-4-2)  edge [dashed] node [above] {$$} (m-3-3)
(m-4-4)  edge [dashed] node [above] {$$} (m-3-3)
(m-4-2)  edge [dashed] node [above] {$$} (m-3-1)
(m-4-3)  edge [dashed] node [above] {$$} (m-3-1)
;
\end{tikzpicture}
\caption{
Action of $D(B_k)$ on $G(\mathcal{I})=\mathcal{A}_{(-)^k}\oplus \mathcal{C}_{+}$. 
The $m$th layer consists of  $2{k\choose m}$ simple  (one-dimensional) $B_k$-subquotients having the same sign (the signs $\pm$ correspond to the $g$-action):
$\binom{k}{m}$ of them  in the left summand are spanned by basis elements $(x_{i_1}\dots x_{i_m}g)^*$,
while  ${k\choose m}$ ones  in the right summand are spanned by  $(x_{i_1}\dots x_{i_m})^*$.
The  solid lines indicate the action of $x_i$'s and the dashed ones are for the $y_i$'s action.
}
\label{fig:arrows}
\end{figure}
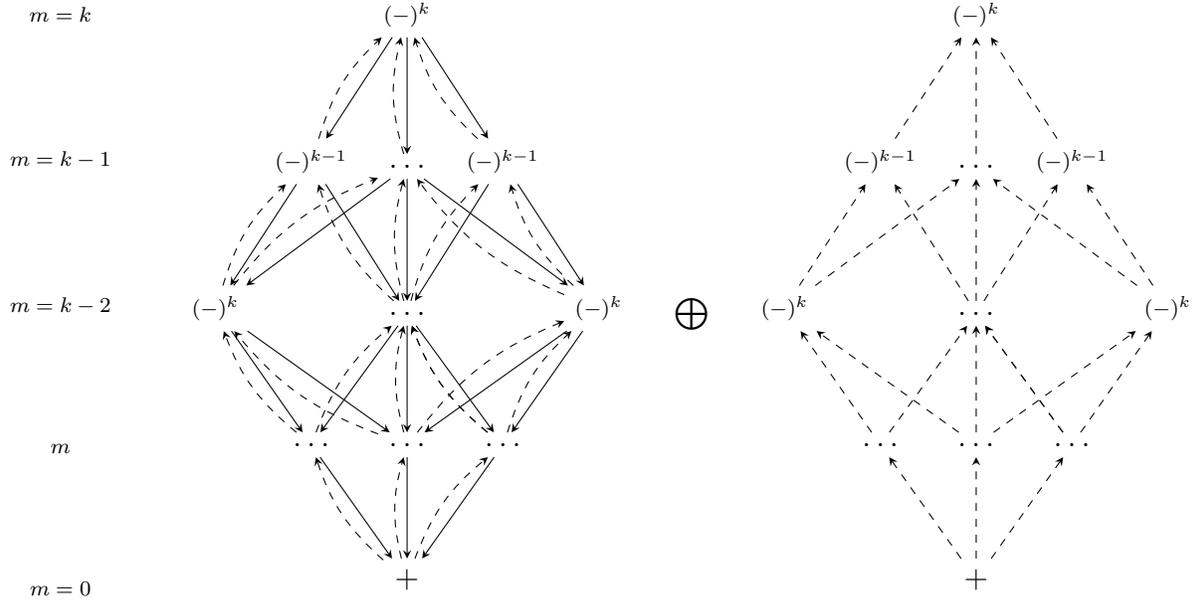

The analysis of the decomposition of $G(\mathcal{I}_{-})$ is completely analogous and we skip it.
Indeed, reproducing the above calculations in this case shows that $G(\mathcal{I}_{-})$ is isomorphic to $G(\mathcal{I})\otimes \mathcal{I}_{-}$.
\end{proof}

\begin{Cor}\label{Cor:Gprojectives}
  The modules $\Cat_+$ and $\Cat_-$ are $G$-projective.
\end{Cor}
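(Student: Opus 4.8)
The plan is to deduce the statement directly from the decompositions established in Lemma~\ref{lemma:DecomCoad} together with the general criterion for $G$-projectivity from Lemma~\ref{lemma:directsummands}. The key observation is that both $\mathcal{C}_+$ and $\mathcal{C}_-$ occur as direct summands of objects lying in the image of the comonad $G$, and such summands are automatically $G$-projective.

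Concretely, first I would recall that by Lemma~\ref{lemma:directsummands}(1) every object of the form $G(X)$ with $X\in Z\mathrm{-mod}$ is $G$-projective, since $\varepsilon_{G(X)}\circ\Delta_X=\id_{G(X)}$ exhibits the required splitting. Applying this to $X=\mathcal{I}$ and $X=\mathcal{I}_-$ shows that $G(\mathcal{I})$ and $G(\mathcal{I}_-)$ are $G$-projective. Next, Lemma~\ref{lemma:DecomCoad} supplies the decompositions
\begin{equation*}
G(\mathcal{I})\cong\mathcal{A}_{(-)^k}\oplus\mathcal{C}_{+},\qquad
G(\mathcal{I}_-)\cong\mathcal{A}_{(-)^{k+1}}\oplus\mathcal{C}_{-},
\end{equation*}
so that $\mathcal{C}_+$ is a direct summand of $G(\mathcal{I})$ and $\mathcal{C}_-$ is a direct summand of $G(\mathcal{I}_-)$. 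Finally, by Lemma~\ref{lemma:directsummands}(2) (equivalently, by Corollary~\ref{cor:G-proj}, which characterises $G$-projectives as retracts of objects $G(Y)$) direct summands of $G$-projective objects are themselves $G$-projective, and the conclusion follows at once.

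There is essentially no obstacle here: all the genuine work has already been done in Lemma~\ref{lemma:DecomCoad}, and the corollary is a one-line consequence of the structural Lemma~\ref{lemma:directsummands}. The only point worth flagging is bookkeeping rather than mathematics, namely that one must realise $\mathcal{C}_\pm$ as summands of $G$ applied to \emph{honest} $Z$-modules (here the simple units $\mathcal{I}$ and $\mathcal{I}_-$) rather than as abstract summands; this is exactly what Lemma~\ref{lemma:DecomCoad} guarantees, under the identification $Z\mathrm{-mod}\cong D(B_k)\mathrm{-mod}$ of Proposition~\ref{proposition:DZ-isomorphism}.
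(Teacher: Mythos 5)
Your proposal is correct and follows exactly the paper's own argument: the paper likewise invokes Lemma~\ref{lemma:directsummands} (objects $G(X)$ are $G$-projective, and direct summands of $G$-projectives are $G$-projective) together with the decompositions of $G(\mathcal{I})$ and $G(\mathcal{I}_-)$ from Lemma~\ref{lemma:DecomCoad}. Your version is simply a more explicit write-up of the same two-line deduction.
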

\begin{proof}
$G(\mathcal{I}_{\pm})$ and their direct summands are $G$-projective by Lemma~\ref{lemma:directsummands}. Therefore due to Lemma~\ref{lemma:DecomCoad}, $\Cat_{\pm}$ are $G$-projective.
\end{proof}

\begin{Lemma}\label{lem:A-cpx}
Let $A$ be a $k$-algebra with an augmentation map $\epsilon\colon A\to k$, for a field $k$. Assume we have an exact complex  of $k$-vector spaces
\be\label{eq:resolution-k}
R\colon \; \dots \xrightarrow{f_{n+1}} k^{c_n}  \xrightarrow{\;f_{n}\;} k^{c_{n-1}}  \xrightarrow{f_{n-1}} \dots  \xrightarrow{\;f_{2}\;} k^{c_1}  \xrightarrow{\;f_{1}\;} k^{n_0} \to k\to 0 .
\ee
Let ${}_{\epsilon}R$ denotes the corresponding complex of $A$-modules with $k^{c_n}$ replaced by ${}_{\epsilon}k^{\oplus c_n}$.
Then for any $A$-module $M$ the complex $\Hom_A(M,{}_{\epsilon}R)$ is also exact.
\end{Lemma}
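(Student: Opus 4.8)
The plan is to reduce the statement to the exactness of $\Hom$-functors between vector spaces over a field, by identifying $A$-linear maps into a trivial module with ordinary $k$-linear maps out of a suitable trivial quotient of $M$.

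First I would record that ${}_{\epsilon}R$ really is a complex of $A$-modules. The maps $f_n$ are $k$-linear by hypothesis, and between trivial modules every $k$-linear map is automatically $A$-linear, since for $a\in A$ and $v$ in the source one has $f_n(a.v)=f_n(\epsilon(a)v)=\epsilon(a)f_n(v)=a.f_n(v)$. Thus nothing beyond $k$-linearity needs to be checked, and the images and composites inherit the relation $f_{n-1}\circ f_n=0$ from $R$.

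The heart of the argument is a natural isomorphism. Let $I:=\ker\epsilon$ denote the augmentation ideal, a two-sided ideal, and set $M_{\epsilon}:=M/IM$; here $IM$ is an $A$-submodule because $AI\subseteq I$, and $M_{\epsilon}$ carries the trivial $A$-action. Then for every $k$-vector space $W$ I claim there is an isomorphism, natural in $W$,
\[
\Hom_A(M,{}_{\epsilon}W)\;\cong\;\Hom_k(M_{\epsilon},W).
\]
Indeed, any $A$-linear $\phi\colon M\to{}_{\epsilon}W$ kills $IM$, since $\phi(a.m)=\epsilon(a)\phi(m)=0$ for $a\in I$, so it factors uniquely through $M_{\epsilon}$ as a $k$-linear map; conversely every $k$-linear map $M_{\epsilon}\to W$ induces an $A$-linear map on $M$ because both $M_{\epsilon}$ and ${}_{\epsilon}W$ carry the trivial action. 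Applying $\Hom_A(M,-)$ to the complex ${}_{\epsilon}R$ therefore produces, up to natural isomorphism, the complex $\Hom_k(M_{\epsilon},R)$ obtained by applying $\Hom_k(M_{\epsilon},-)$ to $R$.

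To finish, I would invoke that $k$ is a field, so $M_{\epsilon}$ is free and hence projective over $k$; consequently $\Hom_k(M_{\epsilon},-)$ is an exact functor (a choice of basis lets one lift along any surjection). An exact functor commutes with homology, so applying it to the exact complex $R$ yields an exact complex, whence $\Hom_k(M_{\epsilon},R)$, and therefore $\Hom_A(M,{}_{\epsilon}R)$, is exact. The computation is light throughout; the only point demanding a little care — the closest thing to an obstacle — is checking that the displayed isomorphism is natural in $W$, so that it genuinely intertwines the two differentials. This is routine, since on both sides the coboundary maps are induced by post-composition with the $f_n$, and I would verify this compatibility directly before concluding.
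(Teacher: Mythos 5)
Your proof is correct, but it routes through a different key identification than the paper's. You isolate the module-theoretic content in the natural isomorphism $\Hom_A(M,{}_{\epsilon}W)\cong\Hom_k(M/IM,\,W)$, where $I=\ker\epsilon$, i.e.\ you factor every $A$-map into a trivial module through the coinvariants $M_{\epsilon}=M/IM$, and then conclude by exactness of the functor $\Hom_k(M_{\epsilon},-)$ on vector spaces. The paper instead works term by term: it decomposes each cochain group as $\Hom_A\bigl(M,{}_{\epsilon}k^{\oplus c_n}\bigr)\cong\Hom_A(M,{}_{\epsilon}k)\otimes k^{c_n}$, checks that the differentials become $\id\otimes f_n$, and so identifies the whole complex with $\Hom_A(M,{}_{\epsilon}k)\otimes R$; exactness then follows because tensoring an exact complex of $k$-vector spaces with a fixed vector space preserves exactness. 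The two arguments are linked by $\Hom_A(M,{}_{\epsilon}k)\cong (M_{\epsilon})^{*}$, so in the end both reduce the statement to linear algebra over a field, but the mechanisms differ: your version packages the reduction into one adjunction-style lemma, is arguably more conceptual, and does not use finiteness of the ranks $c_n$ (the isomorphism $\Hom_A(M,{}_{\epsilon}W)\cong\Hom_k(M_{\epsilon},W)$ holds for arbitrary $W$), whereas the paper's coordinate computation implicitly relies on each $k^{c_n}$ being a finite direct sum to pull out the tensor factor, but avoids introducing the augmentation ideal altogether. Your closing concern about naturality in $W$ is indeed the right thing to check, and it is immediate since both differentials are post-composition by the maps $f_n$, which are $A$-linear between trivial modules.
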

\begin{proof}
The cochain spaces of the complex $\Hom_A(M,{}_{\epsilon}R)$  are 
\be\label{eq:cochain-iso}
C^n:= \Hom_A(M,{}_{\epsilon}k^{\oplus c_n})\cong \Hom_A(M,{}_{\epsilon}k)\otimes k^{c_n} \ ,
\ee
and cochain maps $\widehat{f}_n\colon C^n \to C^{n-1}$   are given by $\widehat{f}_n\colon \phi \mapsto f_n\circ \phi$ for each $\phi\in C^n$. Using the isomorphism in~\eqref{eq:cochain-iso}, we can assume without loss of generality that $\phi = \psi\otimes v$ for some $\psi \in  \Hom_A(M,{}_{\epsilon}k)$ and  $v\in k^{c_n}$. On such vectors  $\widehat{f}_n(\phi) =  \psi \otimes  f_n(v)$, or $\widehat{f}_n = \id \otimes f_n$\footnote{For brevity, we omit the conjugation by the isomorphism from~\eqref{eq:cochain-iso}.}. 
Therefore, we have an isomorphism of complexes:
$$
\Hom_A(M,{}_{\epsilon}R) \cong  \Hom_A(M,{}_{\epsilon}k)\otimes R, 
$$
with cochain maps of the form $\id \otimes f_n$, and  exactness of $\Hom_A(M,{}_{\epsilon}R)$  follows from exactness of $R$.
\end{proof}

We can now construct the desired $G$-resolution:
\begin{Lemma}\label{lemma:G-resolution}
There is a $G$-resolution in $D(B_k)\mathrm{-mod}$ of the following form:
\begin{equation}\label{eq:resolution}
\dots \to \mathcal{C}_-^{\oplus a_3}\to \mathcal{C}_+^{\oplus a_2}\to \mathcal{C}_-^{\oplus a_1}\to \mathcal{C}_+^{\oplus a_0}\to \mathcal{I}\to 0
\end{equation}
with $a_n={k+n-1\choose n}$.
\end{Lemma}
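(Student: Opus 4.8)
The plan is to build the resolution in two stages: first produce a projective resolution over the subalgebra $B_k^*\subset D(B_k)$, then upgrade it to a complex of $D(B_k)$-modules. The relations~\eqref{eq:relationdD1} tell us that $h^2=1$, $\{y_i,y_j\}=0$ and $\{y_i,h\}=0$, so $B_k^*\cong\Lambda\C^k\rtimes\C[\Z_2]$ is again an exterior algebra smashed with $\C[\Z_2]$, with $h$ acting on the $y_i$ by $-1$; its two indecomposable projectives are $\mathcal{C}_\pm=B_k^*f_\pm$, $f_\pm=\tfrac{1\pm h}{2}$, as in~\eqref{eq:Cpm-def}. First I would take the classical Koszul resolution of the trivial module over the exterior algebra and make it $\Z_2$-equivariant; this is the minimal projective resolution of the simple $B_k^*$-module $\C_+$ (on which $h=+1$, $y_i=0$) and has the form
\[
\dots\to\mathcal{C}_-^{\oplus a_1}\to\mathcal{C}_+^{\oplus a_0}\to\C_+\to 0,
\]
where the $n$-th term is a sum of $\dim S^n(\C^k)$ copies of a projective, so $a_n=\binom{k+n-1}{n}$. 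The alternation $\mathcal{C}_{(-)^n}$ is forced: the Koszul differential multiplies by a single $y_i$, which reverses the $h$-parity, so placing the $n$-th layer of generators in $h$-eigenvalue $(-1)^n$ is exactly what makes the differential commute with $h$.

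Next I would promote this to $D(B_k)\mathrm{-mod}$ by declaring $x_i$ to act by $0$ and $g$ to act as $h$ on every term, which is precisely the $D(B_k)$-structure of $\mathcal{C}_\pm$. The only relation of $D(B_k)$ coupling the two subalgebras is $\{x_i,y_j\}=\delta_{i,j}(1-hg)$ from~\eqref{eq:relationD2}; since $g=h$ gives $hg=h^2=1$, the right-hand side vanishes, which is consistent with $x_i=0$. The Koszul differentials are left $B_k^*$-linear and, by the parity bookkeeping above, $h$-equivariant; as they trivially commute with the zero action of the $x_i$ and with $g=h$, they are morphisms of $D(B_k)$-modules. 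The augmentation $\mathcal{C}_+\to\C_+$ becomes the projection onto the $D(B_k)$-module on which $(g,h)$ acts by $(+,+)$, i.e.\ onto $\mathcal{I}=\mathcal{I}_+$ by Lemma~\ref{lem:D-simples}. By Corollary~\ref{Cor:Gprojectives} each term $\mathcal{C}_{(-)^n}^{\oplus a_n}$ is $G$-projective, so it only remains to verify $G$-exactness.

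For $G$-exactness I would use the adjunction $\F\dashv\U$ defining $G=\F\circ\U$: for every $N\in Z\mathrm{-mod}$ it gives a natural isomorphism $\Hom_{Z\mathrm{-mod}}(G(A),N)\cong\Hom_{B_k}(\U(A),\U(N))$, so our sequence is $G$-exact exactly when, for each $A\in Z\mathrm{-mod}$, applying $\Hom_{B_k}(\U(A),-)$ to its image under $\U$ yields an exact complex. Forgetting to $B_k\mathrm{-mod}$, each $\mathcal{C}_\pm$ becomes $I_+^{\oplus 2^{k-1}}\oplus I_-^{\oplus 2^{k-1}}$ (because $x_i=0$ and $g=h$), and since the differentials preserve the $g$-eigenvalue the whole complex splits into its $g=+1$ and $g=-1$ parts. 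Each part is an exact complex of vector spaces inflated to $B_k$-modules through the augmentation $\varepsilon$ (for $g=+1$) and through the sign character $g\mapsto-1$, $x_i\mapsto 0$ (for $g=-1$). Applying Lemma~\ref{lem:A-cpx} to the algebra $B_k$ with each of these two augmentations shows that $\Hom_{B_k}(M,-)$ preserves the exactness of both parts for every $B_k$-module $M$, in particular for $M=\U(A)$. This delivers $G$-exactness and completes the construction.

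The main obstacle is the bookkeeping in the second stage: one must check that a single Koszul resolution carries a consistent $D(B_k)$-action, namely that the sign twist forced by $h$-equivariance of the $y_i$-multiplication is compatible with collapsing the $x_i$ to zero via $1-hg=0$, and that this very $g$-grading is what lets the underlying complex split so that Lemma~\ref{lem:A-cpx} applies summand-wise. Everything else—the ranks $a_n=\binom{k+n-1}{n}$, the identity $\partial^2=0$, and plain exactness—is inherited directly from the classical Koszul resolution over the exterior algebra.
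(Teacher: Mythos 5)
Your proposal is correct and follows essentially the same route as the paper's proof: both construct the $\Z_2$-equivariant Koszul resolution of the trivial module over $B_k^*\cong\Lambda\C^k\rtimes\C[\Z_2]$ (with the alternation $\mathcal{C}_{(-)^n}$ forced by the oddness of the Koszul differential and the ranks $a_n=\dim S^n(\C^k)=\binom{k+n-1}{n}$), regard it as a complex of $D(B_k)$-modules via the trivial $x_i$-action and $g=h$, and obtain $G$-projectivity of the terms from Corollary~\ref{Cor:Gprojectives}. Your $G$-exactness argument --- the adjunction isomorphism $\Hom_{Z\mathrm{-mod}}(G(A),N)\cong\Hom_{B_k}\bigl(\U(A),\U(N)\bigr)$, the splitting of the forgotten complex into its $g=+1$ and $g=-1$ parts built from $I_+$ and $I_-$, and Lemma~\ref{lem:A-cpx} applied to each part with the corresponding augmentation of $B_k$ --- is exactly the paper's.
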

\begin{proof}
We first construct an exact sequence of $D(B_k)$-modules  of the form~\eqref{eq:resolution} and then check that it is also $G$-exact.
Since the action of $x_i$ on $\Cat_{\pm}$ is trivial and $g$ acts as $h$, it suffices to construct an exact sequence in $B_k^*\mathrm{-mod}$. 

We have from~\eqref{eq:relationdD1} and~\eqref{eq:relationD2} that 
$$
 B_k^* = \langle y_1,\dots ,y_k,h\rangle \cong \Lambda \mathbb{C}^k\rtimes \C[\Z_2]
$$
where $\Lambda \mathbb{C}^k$ is the exterior algebra of $\mathbb{C}^k = \mathrm{Span}\{y_i, \, 1\leq i \leq k\}$ and the  isomorphism is obvious. Under the isomorphism, the  $B_k^*$-modules $\Cat_{\pm}$ are isomorphic to the vector space $\Lambda\mathbb{C}^k$ with $\Lambda\mathbb{C}^k$-action given by the multiplication $\wedge$ on $\Lambda\mathbb{C}^k$ and with the action $h.1=\pm 1$.

We recall  the `Koszul resolution' of the trivial module $\mathbb{C}$ over the exterior algebra\footnote{This is the complex dual to the one in~\cite[Ex.~17.21 (c)]{E} and composed with the augmentation map $\Lambda \C^k \to \C$, $y_i\to 0$, $1\to 1$.}:
	 \begin{equation}\label{eq:Koszul}
	 	\dots \to  S^2\left(\C^k\right) \otimes \Lambda\mathbb{C}^k\xrightarrow{\tilde{f}_2} S^1\left(\C^k\right)\otimes \Lambda\mathbb{C}^k\xrightarrow{\tilde{f}_1} S^0\left(\C^k\right)\otimes \Lambda\mathbb{C}^k\xrightarrow{\tilde{f}_0} \mathbb{C}\to 0,
	 \end{equation}
	 where  the subspaces $S^n\bigl(\C^k\bigr)$ of  the symmetric algebra  $S\bigl(\C^k\bigr)$ consist of elements of the form of $n$-fold tensor products, and $\tilde{f}_i$ are $\Lambda\mathbb{C}^k$-module maps  such that $\tilde{f}_{i+1} \circ \tilde{f}_i = 0$.

We are now able to construct a resolution of the form~\eqref{eq:resolution}. Note that the action of $h$ endows the cochain spaces in~\eqref{eq:resolution} with $\mathbb{Z}_2$ grading. Let $\Pi\colon\Cat_{\pm}\to\Cat_{\mp}$ denotes the corresponding parity shift operator, i.e.\ it is $\Lambda\mathbb{C}^k$-equivariant and sends $1$ to $1$. 
	  Then, the above Koszul complex~\eqref{eq:Koszul} can be extended to a $\mathbb{Z}_2$-equivariant one as follows:
	\begin{equation}\label{eq:resolution2}
	\dots \to  S^2\left(\C^k\right) \otimes \mathcal{C}_+\xrightarrow{f_2} S^1\left(\C^k\right)\otimes \mathcal{C}_-\xrightarrow{f_1} S^0\left(\C^k\right)\otimes \mathcal{C}_+\xrightarrow{f_0} \mathcal{I}\to 0
	\end{equation}
	  	where the tensor products are over $\mathbb{C}$ and we define
	  	 \begin{equation}\label{eq:fn}
	  	 f_n:=\left(\id_{S^{n}(\mathbb{C}^k)}\otimes\Pi\right)\circ\tilde{f}_n.
	  	 \end{equation}
The parity shift part in~\eqref{eq:fn} is necessary in order to make the cochain maps even, indeed the maps $\tilde{f}_n$ used in~\eqref{eq:Koszul} are odd. 
We note that the  complex~\eqref{eq:resolution2} is just a projective resolution of the trivial $B_k^*$-module\footnote{One can check that this is actually a minimal projective resolution.}.
	The formula for the multiplicities $a_n$ in~\eqref{eq:resolution} then follows from the fact that $\dim S^n(\mathbb{C}^k)={k+n-1\choose n}$.
	
	 In the remainder of the proof we show that the exact sequence in~\eqref{eq:resolution2} is in fact a $G$-resolution. 
	 All objects (except $\mathcal{I}$) are $G$-projective by Corollary~\ref{Cor:Gprojectives}.
	We can further check that the resolution is $G$-exact: If we apply the functor 
	\begin{equation}
	\Hom_{D(B_k)}\bigl(G(X),?\bigr)\cong\Hom_{B_k}\bigl(\U(X),\U(?)\bigr),
	\end{equation}
	with $X\in D(B_k)\mathrm{-mod}$ and via $\U$ forgetting the $B_k^*$ part of the $D(B_k)$-action, we obtain the complex
	\begin{align}\label{eq:resolutionHom}
	\dots
	\xrightarrow{\widehat{f}_{n+1}} \Hom_{B_k}\bigl(\U(X), \U(\mathcal{C}_{(-)^n})^{\oplus a_n}\bigr)\xrightarrow{\widehat{f}_n}
	\Hom_{B_k}\bigl(\U(X),\U(\mathcal{C}_{(-)^{n-1}})^{\oplus a_{n-1}}\bigr)\xrightarrow{\widehat{f}_{n-1}}\dots
	\end{align}
where $\U(\mathcal{C}_{\pm})$ are completely decomposed into copies of $I$ and $I_-$:
	\begin{equation}
	\U(\mathcal{C}_{\pm})\cong I^{\oplus 2^{k-1}}\oplus I_-^{\oplus 2^{k-1}}.
	\end{equation}
	We note that the maps $\widehat{f}_n$ in~\eqref{eq:resolutionHom} are given by post-composing with the maps from~\eqref{eq:resolution2}: $\widehat{f}_n\colon \phi \mapsto f_n\circ \phi$.
	Therefore, as the cochain maps from (\ref{eq:resolution2}) preserve the $h$-action, the complex~\eqref{eq:resolutionHom} decomposes into a direct sum of complexes, one with cochain spaces  $C^n=\Hom_{B_k}(\U(X), I^{\oplus a_n 2^{k-1}} )$ and the other with $C^n=\Hom_{B_k}(\U(X), I_-^{\oplus a_n 2^{k-1}} )$. It is therefore  enough to show exactness for each copy separately.  Recall that the forgetful functor $\U$ is exact and therefore its image on~\eqref{eq:resolution2} is split on direct sum of two resolutions, one with direct sums of $I$ and the other with $I_-$.
 They are both resolutions in vector spaces after applying the fiber functor. 
 Then applying Lemma~\ref{lem:A-cpx} for each of these resolutions and the case $A=B_k$ and $M=\U(X)$  proves  exactness of~\eqref{eq:resolutionHom}.
\end{proof}

Finally, we can apply the general theory of comonad cohomology to prove the formula in Theorem~\ref{proposition:DYSweedler} for $\dim H^n_{DY}(B_k\mathrm{-mod})$.

\begin{proof}[Proof of Theorem~\ref{proposition:DYSweedler} for the identity functor]
By Theorem~\ref{theorem:Main}, we can reformulate Da\-vy\-dov-Yet\-ter cohomology of the identity functor as the comonad cohomology of the co\-mo\-nad~$G$ for the case when the coefficients $\mathsf{X}=\mathsf{Y}=\mathcal{I}$ are the trivial $D(B_k)$-module.
 Theorem~\ref{theorem:Comparison} allows to use any $G$-resolution to compute the cohomologies. 
 We compute the comonad cohomology (and hence DY cohomology) by applying the respective coefficient functor $\Hom_{D(B_k)}(-,\mathcal{I})$ to the $G$-resolution constructed in Lemma~\ref{lemma:G-resolution}. The statement for the identity functor in (\ref{eq:SweedlerDY}) follows immediately from observing that $\Hom_{D(B_k)}(\mathcal{C}_-,\mathcal{I})=0$ and $ \Hom_{D(B_k)}(\mathcal{C}_+,\mathcal{I})=\C$. 
 \end{proof}
 
 \begin{Remark}\label{rem:DY-exmp}
	We can write down  generators of $H^2_{DY}(B_k\mathrm{-mod})$ explicitly.
For a Hopf algebra $H$, the algebra of natural transformations $\Nat_{\Hmod}(\otimes^n,\otimes^n)$ is isomorphic to the subalgebra in $H^{\otimes n}$ that commutes with the $n$-fold coproduct $\Delta^{(n)}(h)$ for any $h\in H$, as was observed
 in~\cite[Sec.~6]{ENO}.
	 In the following table, $f=\sum_i f^{i}_{1}\otimes\dots \otimes f^{i}_{n}\in H^{\otimes n}$ corresponds to the natural transformation defined by 
	  $\eta_f(v_1\otimes\dots\otimes v_n):= \sum_i f^{i}_{1}.v_1\otimes\dots\otimes f^{i}_{n}.v_n$, for $v_k\in V_k$. 
	 \begin{center}
	 {\footnotesize
		\begin{tabular}{|c|c|c|c|c|c|c|c|c|c|}\hline
			Hopf algebra & Generators of
 $H^2_{DY}(B_k\mathrm{-mod})$\\ \hline
			$B_1$ & $x\otimes xg$\\ \hline
			$B_2$& $x_1\otimes x_1g,\;\; x_2\otimes x_2g,\;\; x_1\otimes x_2g+x_2\otimes x_1g$\\ \hline
			$B_3$& $x_1\otimes x_1g,\; x_2\otimes x_2g,\; x_3\otimes x_3g$,\\ 
			&$ x_1\otimes x_3g+x_3\otimes x_1g,\; x_2\otimes x_1g+x_1\otimes x_2g,
			\; x_2\otimes x_3g+x_3\otimes x_2g$\\\hline
		\end{tabular}
		}
	\end{center}
These natural transformations 
define infinitesimal deformations of the monoidal structure of the identity functor.
Due to the fact that $H^3_{DY}=0$ in this case, the  deformations have no obstructions.
\end{Remark}

In the remainder of this section we prove the formula in Theorem~\ref{proposition:DYSweedler} for the forgetful functor, i.e.\ for $\dim H^n_{DY}(\mathcal{U}_{B_k})$.
Using  Theorem~\ref{cor:HochschildasCoefficients} and the discussion after it, we compute the DY cohomology of the forgetful functor via the DY cohomology of the identity functor with second coefficient $\left(B^*_{k,\mathrm{coreg}},\beta_c\right)$, recall its definition  in~\eqref{eq:beta}. In the following lemma we decompose the $D(B_k)$-module corresponding to $\left(B^*_{k,\mathrm{coreg}},\beta_c\right)$. 

\begin{Lemma}\label{lemma:DecomCoreg}
	We have a decomposition of the $D(B_k)$-module $\left(B^*_{k,\mathrm{coreg}},\beta_c\right)$:
	\begin{equation}
	\left(B_{k,\mathrm{coreg}}^*,\beta_c\right)\cong\mathcal{A}_{(-)^{k+1}}\oplus \mathcal{B}_{(-)^k}.
	\end{equation}
\end{Lemma}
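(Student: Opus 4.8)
The plan is to analyse the module $\left(B^*_{k,\mathrm{coreg}},\beta_c\right)$ directly over the Drinfeld double, exploiting the central involution $gh$. By Proposition~\ref{proposition:DZ-isomorphism} the datum $\beta_c$ is precisely the action of the subalgebra $B_k^*\subset D(B_k)$, while the $B_k$-action is the coregular one $(a.f)(b)=f(ba)$. First I would record the resulting actions on the dual basis $\{(x_Sg^r)^*\}$, where $S\subseteq\{1,\dots,k\}$ and $r\in\mathbb{Z}_2$: a short computation shows that $g$ interchanges $(x_S)^*\leftrightarrow (x_Sg)^*$ (i.e.\ it flips $r$), that each $x_j$ removes the index $j$ from $S$ up to a sign (lowering $|S|$ by one), that the generator $h=1^*-g^*$ acts diagonally by $(-1)^{|S|}$ via $\beta_c$, and that each $y_i=x_i^*-(x_ig)^*$ raises $|S|$ by one, so in particular $y_i$ annihilates every top-degree vector with $|S|=k$. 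Extracting the $h$- and $y_i$-actions is where the defining formula $\beta_c(\psi\otimes f)(b)=\psi\bigl(S(b_{(1)})b_{(3)}\bigr)f(b_{(2)})$ together with $\Delta^{(2)}$ and the antipode must be used carefully.

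Next I would observe, using the relations~\eqref{eq:relationdD1}--\eqref{eq:relationD2}, that $gh$ is central in $D(B_k)$ with $(gh)^2=1$: indeed $g$ and $h$ each anticommute with all $x_i$ and all $y_i$, so their product commutes with every generator. Hence every $D(B_k)$-module splits as the direct sum of the two eigenspaces of $gh$, and this already produces the desired decomposition at the level of $D(B_k)$-modules. Writing $u_S^{\pm}:=(x_S)^*\pm (x_Sg)^*$ for the $g$-eigenvectors (eigenvalue $\pm1$), the formulas above give $gh\,u_S^{\pm}=\pm(-1)^{|S|}u_S^{\pm}$, so the $gh=-1$ eigenspace is spanned by the $u_S^+$ with $|S|$ odd and the $u_S^-$ with $|S|$ even, and the $gh=+1$ eigenspace by the complementary family; both are $2^k$-dimensional.

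I would then identify the two summands as $B_k$-modules. Since $x_j$ lowers $|S|$ while preserving the $gh$-grading, each eigenspace is cyclic, generated by its top-degree vector $u_{[k]}$ via the $x_j$; as the radical of $B_k$ is the ideal $(x_1,\dots,x_k)$, the head of each eigenspace is the one-dimensional $B_k$-module on which $g$ acts by its eigenvalue on $u_{[k]}$. This eigenvalue is $(-1)^{k+1}$ on the $gh=-1$ part and $(-1)^{k}$ on the $gh=+1$ part, so each eigenspace is a $2^k$-dimensional cyclic quotient of the corresponding projective cover and therefore equals it, giving $P_{(-)^{k+1}}$ and $P_{(-)^{k}}$ respectively. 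For the $gh=-1$ summand this already suffices: $\mathcal{A}_+$ and $\mathcal{A}_-$ are the only simple $D(B_k)$-modules with $gh=-1$, and they are projective and injective, so that block is semisimple and the $gh=-1$ eigenspace must be a single $\mathcal{A}_{\pm}$; its restriction $P_{(-)^{k+1}}$ together with Lemma~\ref{lem:D-simples} identifies it as $\mathcal{A}_{(-)^{k+1}}$.

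Finally, for the $gh=+1$ summand I would argue it is $\mathcal{B}_{(-)^k}$. On this eigenspace $h=g$ (since $hg=1$), so the last relation in~\eqref{eq:relationD2} yields $\{x_i,y_j\}=\delta_{ij}(1-hg)=0$; thus each $y_i$ anticommutes with every $x_j$. Since $y_i$ annihilates the top-degree cyclic generator $u_{[k]}$ and every vector has the form $x_S u_{[k]}$, this anticommutation forces $y_i=0$ on the entire $gh=+1$ eigenspace. Together with the restriction $P_{(-)^k}$ and $h=g$, this is exactly the defining data of $\mathcal{B}_{(-)^k}$, completing the decomposition. The main obstacle is the bookkeeping in the first step: computing the $h$- and $y_i$-actions from $\beta_c$ requires expanding $\Delta^{(2)}(x_Sg^r)$ and tracking the antipode signs, and in particular verifying that $y_i$ genuinely raises degree (equivalently, kills $u_{[k]}$), which is precisely what collapses the $y$-action on the $\mathcal{B}$-summand.
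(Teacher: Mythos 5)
Your proposal is correct, and it reaches the decomposition by a genuinely different organization than the paper, although the computational core (the explicit actions of $g$, $x_j$, $h$ and $y_i$ on the dual basis of $B_k^*$, which you defer but describe with the correct signs and degree shifts) is the same. The paper first decomposes the module over the subalgebra $B_k$: the coregular action is free, so $B^*_{k,\mathrm{coreg}}$ restricted to $B_k$ is $P_+\oplus P_-$ with explicit cyclic generators $(x_1\dots x_k)^*\pm(x_1\dots x_kg)^*$; it then computes the $y_i$-action on the basis of each summand via $\beta_c$, observes that the $y_i$ vanish identically on $P_{(-)^k}$ and act nontrivially on $P_{(-)^{k+1}}$, and identifies the two summands as $\mathcal{B}_{(-)^k}$ and $\mathcal{A}_{(-)^{k+1}}$ using indecomposability, Lemma~\ref{lem:D-simples} and a dimension count. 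You instead split first as a $D(B_k)$-module using the central involution $gh$ (centrality indeed follows from~\eqref{eq:relationdD1} and~\eqref{eq:relationD2}), which makes the direct-sum decomposition automatic rather than something to be read off from the formulas; you then identify the restrictions to $B_k$ by a cyclicity-and-head argument, the $gh=-1$ summand by semisimplicity of that block (the simples $\mathcal{A}_\pm$ being projective and injective), and the $gh=+1$ summand by the observation that $\{x_i,y_j\}=\delta_{ij}(1-hg)$ acts by zero there, so the $y_i$ anticommute with all $x_j$ and are annihilated once $y_i$ kills the top-degree cyclic vector. Your route buys structural transparency and avoids most of the sign bookkeeping in the paper's $y_i$-formulas (you only need that $y_i$ raises degree and kills the top vector), while the paper's route is entirely self-contained at the level of explicit formulas. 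The only place your write-up leans on unperformed computation is the table of actions in the first step, and all the facts you assert there (in particular that $h$ acts by $(-1)^{|S|}$ and that $y_i$ raises $|S|$) are correct and consistent with the formulas the paper derives.
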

\begin{proof}
We first analyze the action of the subalgebra $B_k\subset D(B_k)$.
	The action of $B_k$ on $\left(B^*_{k,\mathrm{coreg}},\beta_c\right)$ is just the coregular action.   
	We have the following actions of $x_j$ and $g$:
	\begin{align}
	g.(x_{i_1}\dots x_{i_m})^*=& (x_{i_1}\dots x_{i_m}g)^*,\nonumber\\
	g.(x_{i_1}\dots x_{i_m}g)^*=& (x_{i_1}\dots x_{i_m})^*,\nonumber\\
	x_j.(x_{i_1}\dots x_{i_m})^*= &\begin{cases}
	(-1)^{m-l}(x_1\dots\hat{x}_{i_l}\dots x_{i_m})^*&\textrm{for}\;\; i_l=j\\
	0& \text{for}\;\;i_l\neq j, \; 1\leq l\leq m,
	\end{cases}\nonumber\\
	x_j.(x_{i_1}\dots x_{i_m}g)^*= &\begin{cases}
	(-1)^{m-l+1}(x_1\dots \hat{x}_{i_l}\dots x_{i_m}g)^*&\textrm{for}\;\; i_l=j\\
	0& \text{for}\;\;i_l\neq j, \; 1\leq l\leq m.
	\end{cases}
	\end{align}
It is clear that this is a free action and  isomorphic to the regular $B_k$-module.
Therefore, 
$B^*_{k,\mathrm{coreg}}$ as a $B_k$-module can be decomposed as
	\begin{equation}
	B^*_{k,\mathrm{coreg}}=P_{+}\oplus P_-,
	\end{equation}
	where in a basis we have the identification
	\begin{equation}
	P_+\cong B_k.\bigl((x_1\dots x_k)^*+(x_1\dots x_kg)^*\bigr)\quad  , \quad  P_- \cong B_k.\bigl((x_1\dots x_k)^*-(x_1\dots x_kg)^*\bigr).
	\end{equation}
	
The action of the subalgebra $B_k^*$ is given by $\beta_c\colon B_k^*\otimes B_k^*\to B_k^*$, recall~\eqref{eq:beta}.
Using the formula~\eqref{eq:coproductformula} twice, we get
	\begin{align}
	y_{i_l}.(x_{i_1}\dots\hat{x}_{i_l}\dots x_{i_m})^*=&(-1)^{m-l}(x_{i_1}\dots x_{i_m})^*+(-1)^{l-1}(x_{i_1}\dots x_{i_m}g)^*,\nonumber\\
	y_{i_l}.(x_{i_1}\dots\hat{x}_{i_l}\dots x_{i_m}g)^*=&(-1)^{l}(x_{i_1}\dots x_{i_m})^*+(-1)^{m-l-1}(x_{i_1}\dots x_{i_m}g)^*.
	\end{align}
	In particular, we obtain on the basis elements of the $B_k$-submodules $P_+, P_-$:
	\begin{align}
	y_{i_l}&.((x_{i_1}\dots\hat{x}_{i_l}\dots x_{i_m})^*\pm(x_{i_1}\dots\hat{x}_{i_l}\dots x_{i_m}g)^*)\\
	&=\begin{cases}
	0&\textrm{for}\;\;(-1)^m=\mp\\
	2(-1)^l(\pm 1)((x_{i_1}\dots x_{i_m})^*\mp(x_{i_1}\dots x_{i_m}g)^*)& \text{for}\;\;(-1)^m=\pm.
	\end{cases}
	\end{align}
	Hence, we can identify the cyclic vector $v_{(-)^{k+1}}:=(x_{1}\dots x_{k})^*+(-1)^{k+1}(x_{1}\dots x_{k}g)^*$ such that the $B_k$ submodule $P_{(-)^{k+1}}\cong B_k.v_{(-)^{k+1}}$ becomes the module $\mathcal{A}_{(-)^{k+1}}$ under the action of $B_k^*\subset D(B_k)$.
	This follows again from the fact that this module is indecomposable and admits the action $g=-h$, which implies that it contains $\mathcal{A}_{(-)^{k+1}}$ as a simple submodule due to Lemma~\ref{lem:D-simples}.
	 Simlarly $P_{(-)^k}$ becomes $\mathcal{B}_{(-)^k}$ under the action of $B_k^*$.
\end{proof}
The comonad cohomology with the coefficient functor $\Hom_{\mathcal{Z}(B_k\mathrm{-mod})}(?,\mathsf{Y})$ preserves direct sums. Thus, we can simply neglect the summand $\mathcal{A}_{\pm}$ in $\mathsf{Y}=\left(B^*_{k,\mathrm{coreg}},\beta_c\right)$ because it is injective and makes the functor $\Hom_{\mathcal{Z}(B_k\mathrm{-mod})}(?,\mathcal{A}_{\pm})$ exact.
\begin{Cor}
	We have
	\begin{equation}
	H^{n}\left(?,\Hom_{D(B_k)}(!,\mathcal{A}_{\pm}\oplus \mathcal{B}_{\mp})\right)_G\cong H^{n}\left(?,\Hom_{D(B_k)}(!,\mathcal{B}_{\mp})\right)_G
	\end{equation}
	for all $n>0$.
\end{Cor}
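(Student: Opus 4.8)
The plan is to exploit additivity of comonad cohomology in its coefficient variable together with the injectivity of $\mathcal{A}_{\pm}$. First I would record that the contravariant coefficient functor splits on direct sums, naturally in its first argument:
\begin{equation*}
\Hom_{D(B_k)}(?,\mathcal{A}_{\pm}\oplus\mathcal{B}_{\mp})\cong\Hom_{D(B_k)}(?,\mathcal{A}_{\pm})\oplus\Hom_{D(B_k)}(?,\mathcal{B}_{\mp}).
\end{equation*}
Applying a functor termwise to the bar complex of Definition~\ref{definition:BarResolution} turns this direct sum of functors into a direct sum of cochain complexes, and since cohomology commutes with finite direct sums one obtains
\begin{equation*}
H^{n}\bigl(?,\Hom_{D(B_k)}(!,\mathcal{A}_{\pm}\oplus\mathcal{B}_{\mp})\bigr)_G\cong H^{n}\bigl(?,\Hom_{D(B_k)}(!,\mathcal{A}_{\pm})\bigr)_G\oplus H^{n}\bigl(?,\Hom_{D(B_k)}(!,\mathcal{B}_{\mp})\bigr)_G.
\end{equation*}
It therefore suffices to show that the first summand vanishes for every $n>0$.

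Second, I would compute $H^{n}(?,\Hom_{D(B_k)}(!,\mathcal{A}_{\pm}))_G$ by switching away from the bar resolution to the explicit $G$-resolution of $\mathcal{I}$ constructed in Lemma~\ref{lemma:G-resolution}, which by its construction is an honest exact sequence of $D(B_k)$-modules. The modules $\mathcal{A}_{\pm}$ are injective in $D(B_k)\mathrm{-mod}$ (they are precisely the projective-injective simple modules), so the contravariant functor $\Hom_{D(B_k)}(?,\mathcal{A}_{\pm})$ is exact. Applying an exact functor to an exact resolution yields an exact cochain complex, whose cohomology vanishes in all positive degrees. By the comparison theorem (Theorem~\ref{theorem:Comparison}) this cohomology coincides with $H^{n}(?,\Hom_{D(B_k)}(!,\mathcal{A}_{\pm}))_G$, which gives the required vanishing and hence, combined with the splitting above, the claimed isomorphism.

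The one step that needs care is the passage from the bar resolution to the resolution of Lemma~\ref{lemma:G-resolution}: I must invoke Theorem~\ref{theorem:Comparison} to guarantee that both $G$-resolutions compute the same groups, and I must use that the resolution of Lemma~\ref{lemma:G-resolution} is genuinely exact in $D(B_k)\mathrm{-mod}$ (not merely $G$-exact), so that exactness of $\Hom_{D(B_k)}(?,\mathcal{A}_{\pm})$ may be applied termwise to conclude acyclicity. Everything else, namely additivity of $\Hom$ in its second argument and additivity of cohomology under finite direct sums, is formal.
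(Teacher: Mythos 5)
Your two-step strategy --- split the coefficient functor over the direct sum, then kill the $\mathcal{A}_{\pm}$-summand using injectivity --- is exactly the paper's (very terse) argument, and both the splitting step and the claim that $\mathcal{A}_{\pm}$ is injective, hence $\Hom_{D(B_k)}(?,\mathcal{A}_{\pm})$ exact, are correct. The gap is in the \emph{scope} of your vanishing step. The corollary is stated with ``?'' as a free variable: it asserts $H^{n}\bigl(\mathsf{X},\Hom_{D(B_k)}(!,\mathcal{A}_{\pm})\bigr)_G=0$ for every object $\mathsf{X}$ of $D(B_k)\mathrm{-mod}\cong Z\mathrm{-mod}$ and all $n>0$. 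Your argument, however, computes this cohomology from the resolution of Lemma~\ref{lemma:G-resolution}, which is a $G$-resolution of the single object $\mathcal{I}$; Theorem~\ref{theorem:Comparison} compares resolutions \emph{of the same object}, so it only identifies your answer with the bar-resolution cohomology of $\mathcal{I}$. What you have proved is therefore the corollary at $?=\mathcal{I}$ --- which happens to be the only case used later (in the proof of Theorem~\ref{proposition:DYSweedler} for the forgetful functor), but it is not the statement as written.

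The fix stays entirely within your method: for an arbitrary $\mathsf{X}$, apply the exact functor $\Hom_{D(B_k)}(?,\mathcal{A}_{\pm})$ to the bar resolution of $\mathsf{X}$ itself (Definition~\ref{definition:BarResolution}), after observing that this bar resolution is an honest exact sequence of $D(B_k)$-modules and not merely $G$-exact. Indeed, exactness of a complex of $D(B_k)$-modules is detected on underlying vector spaces, and the image of the augmented bar complex under the forgetful functor $\U\colon Z\mathrm{-mod}\to B_k\mathrm{-mod}$ is contractible: a contracting homotopy is provided (up to signs) by the components of the unit $\eta$ of the adjunction $\F\dashv\U$, in the same spirit as the homotopy built from $\Delta$ and $\varepsilon$ in the proof of Lemma~\ref{lemma:barresolution}. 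Exactness of $\Hom_{D(B_k)}(?,\mathcal{A}_{\pm})$ then yields an exact cochain complex, hence $H^{n}\bigl(\mathsf{X},\Hom_{D(B_k)}(!,\mathcal{A}_{\pm})\bigr)_G=0$ for all $n>0$ and all $\mathsf{X}$, which together with your splitting step gives the corollary in full generality. Your instinct that genuine exactness of the resolution (rather than $G$-exactness) is the crucial input is right --- $G$-exactness alone does not interact with injectivity of the target --- it just has to be invoked for a resolution of the object sitting in the first slot, not of $\mathcal{I}$.
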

\begin{proof}[Proof of Theorem~\ref{proposition:DYSweedler} for the forgetful functor]
	The statement for the forgetful functor follows from the identities $\Hom_{D(B_k)}(\mathcal{C}_-,\mathcal{B}_-)=0$ and $\Hom_{D(B_k)}(\mathcal{C}_+,\mathcal{B}_-)\cong\C$ for odd $k$ and $\Hom_{D(B_k)}(\mathcal{C}_-,\mathcal{B}_+)=0$ and $\Hom_{D(B_k)}(\mathcal{C}_+,\mathcal{B}_+)\cong\C$ for even $k$.
\end{proof}
\begin{Remark}
The formula~\eqref{eq:SweedlerDY} for the Davydov-Yetter cohomology of the forgetful functor can be obtained by using the following well known isomorphism for a Hopf algebra $H$ and an $H$-bimodule $M$:
\begin{equation}
\mathrm{HH}^{\bullet}(H,M)\cong\mathrm{Ext}^{\bullet}_{H\otimes H^{op}}(H,M)\cong\mathrm{Ext}^{\bullet}_H(k,M_{ad}),
\end{equation}
where $k$ is the trivial module and $M_{ad}$ is the adjoint representation corresponding to the bimodule $M$. Specifically, for the trivial bimodule $M=k$, the latter is just $\mathrm{Ext}_H^{\bullet}(k,k)$.
The Davydov-Yetter cohomology of the forgetful functor is isomorphic to the Hochschild cohomology of the dual Hopf algebra for $M=k$. 
It is thus enough to compute $\mathrm{Ext}_{B^*_k}^n(I,I)$. 
 This can be done with standard homological algebra techniques. In fact, the minimal projective resolution of the trivial module $I$ is identical to the one in~\eqref{eq:resolution} restricted to the subalgebra $B^*_k$. 
 The calculation is therefore analogous to the end of the proof of Theorem~\ref{proposition:DYSweedler} for the identity functor.
\end{Remark}

\bigskip

\end{document}